\documentclass[12pt]{amsart}
\usepackage[a4paper,margin=1in]{geometry}
\usepackage{amsmath,amssymb,amsthm,mathtools}
\usepackage{bm}
\usepackage{graphicx}
\usepackage{xcolor}
\usepackage{float}
\usepackage{tikz}
\usepackage{hyperref}
\usepackage{enumitem}
\usepackage{booktabs}
\usepackage{cite}
\hypersetup{
	hidelinks,
	pdftitle={On hot spots of the torsion function and the ground state of the Laplacian},
	pdfauthor={Jiaxin He, Qinfeng Li, Shangfeng Wu, and Shuangquan Xie}
}

\newtheorem{theorem}{Theorem}[section]
\newtheorem{proposition}[theorem]{Proposition}
\newtheorem{lemma}[theorem]{Lemma}
\newtheorem{corollary}[theorem]{Corollary}
\newtheorem{conjecture}[theorem]{Conjecture}
\theoremstyle{definition}

\newtheorem{remark}[theorem]{Remark}
\newtheorem{numericalobservation}[theorem]{Numerical Observation}

\title[Hot spots of torsion function and ground state]{On hot spots of the torsion function and the ground state of the Laplacian}
\author[J. He]{Jiaxin He}
\address{School of Mathematics, Hunan University, Changsha, P.R. China.}
\email{17773612125@163.com}
\author[Q. Li]{Qinfeng Li}
\address{School of Mathematics, Hunan University, Changsha, P.R. China.}
\email{liqinfeng1989@gmail.com}
\author[S. Wu]{Shangfeng Wu}
\address{School of Mathematics, Hunan University, Changsha, P.R. China.}
\email{202621130048@mail.bnu.edu.cn}
\author[S. Xie]{Shuangquan Xie}
\address{School of Mathematics, Hunan University, Changsha, P.R. China.}
\email{xieshuangquan@hnu.edu.cn‌}
\date{}

\begin{document}
	\begin{abstract}
		We compare the maximum points \(P_\Omega\) and \(Q_\Omega\) of the
		torsion function and the positive first Dirichlet eigenfunction. We begin with a systematic analytic description of both
		hot spots over the full range of opening angles of circular sectors: uniqueness,
		strict monotonicity with respect to the angle, distinct
		small-angle asymptotics, and explicit limiting equations in the
		slit-disk limit. We also prove the existence of a coincidence angle and establish arithmetic
		properties of the limiting coordinates. We then consider bounded convex planar domains with a line of symmetry, and we prove the sharp diameter-normalized identity
		\[
		\sup_\Omega
		\frac{|P_\Omega-Q_\Omega|}{\operatorname{diam}(\Omega)}=\frac12.
		\]
		Its upper-bound argument uses standard localization in the heart and
		an elementary geometric enclosure. Sharpness follows from thin convex
		domains with a vanishing relative width perturbation peaking near one
		endpoint: the torsion hot spot approaches that endpoint, whereas the
		ground-state hot spot approaches the midpoint.
	\end{abstract}
	\maketitle
	
	\section{Introduction}
	
	\noindent The paper studies two aspects of the comparison between the torsion
	hot spot and the ground-state hot spot: how the two points move with the
	opening angle in a circular sector
	(Sections~\ref{sec:sector-torsion}--\ref{sec:sector-comparison}), and how far
	apart they can be in a convex planar domain with a line of symmetry
	(Section~\ref{sec:convex-diameter}). The sharp value of this
	diameter-normalised separation is \(1/2\). Section~\ref{sec:triangles} treats
	sharp isosceles triangles. The two parts are proved independently, and a
	reader interested only in the sharp constant may begin at
	Section~\ref{sec:convex-diameter}.
	
	\subsection{Motivations and our results on circular sectors}
	Let \(\Omega\subset \mathbb R^2\) be a bounded planar domain. The torsion function of \(\Omega\) is the solution of
	\[
	-\Delta u=1\quad \hbox{in }\Omega,
	\qquad u=0\quad \hbox{on }\partial\Omega .
	\]
	A maximum point of \(u\) will be called a torsion hot spot. This terminology is meant literally in the standard heat interpretation: \(u\) is the steady-state temperature in a body with constant heat source and zero boundary temperature, so the torsion hot spot is the location of maximal temperature.  The same function also has classical mechanical and fluid interpretations.  In Saint-Venant torsion theory, \(|\nabla u|\) represents the magnitude of the shear stress, and the maximum of \(u\) is a natural geometric marker of the torsion problem. If \(u\) is viewed as a stream function, then \(\nabla^\perp u\) is a steady incompressible Euler flow; the level sets of \(u\) are streamlines and the critical points of \(u\) are stagnation points.  Thus the maximum point of \(u\) is both the hottest point in the heat model and a distinguished stagnation point in the associated steady flow. These should not be confused with the hot spots of the Neumann problem for the heat equation, where the extrema of the second Neumann eigenfunction are at issue; every maximum discussed here is an interior maximum of a Dirichlet problem.
	
	\begingroup
	A further motivation comes from work of the second author with Yao
	\cite{liyao2024} and with Xie, Yang and Yao \cite{lixieyangyao2026} on fail
	points, the boundary points where \(|\nabla u|\) attains its maximum. For a
	convex planar domain symmetric with respect to both coordinate axes,
	Saint-Venant's conjecture asserts that they occur at the contact points of a
	largest inscribed circle centered at the symmetry center, which is also the
	torsion hot spot. Although the conjecture fails in general
	\cite{liyao2024}, the question suggests a link between interior torsion
	maxima and boundary stress maxima, and motivates locating the torsion hot
	spot precisely.
	\par\endgroup
	
	When \(\Omega\) is convex, the torsion hot spot is unique by the classical concavity theory of Makar-Limanov, Brascamp--Lieb, Korevaar, Kennington, and related work \cite{makarlimanov,brascamplieb,korevaar1983,kennington,korevaarlewis}.
	
	The second object considered in this paper is the positive first Dirichlet eigenfunction
	\[
	-\Delta \phi_1=\lambda_1(\Omega)\phi_1\quad \hbox{in }\Omega,
	\qquad
	\phi_1=0\quad \hbox{on }\partial\Omega,
	\qquad
	\phi_1>0.
	\]
	We call its maximum point the ground-state hot spot. In convex domains,
	\(\phi_1\) is log-concave by the Brascamp--Lieb theorem
	\cite{brascamplieb}. Its maximum point is unique, as follows from
	log-concavity together with interior analyticity. Indeed, two distinct
	maximum points would force \(\phi_1\) to equal its positive maximum
	on the segment joining them. The restriction of \(\phi_1\) to the
	line through that segment is real analytic on its intersection with
	\(\Omega\), which is an interval by convexity. The one-dimensional
	identity theorem would then make this restriction constant on the
	whole interval, contradicting the zero boundary values at its
	endpoints. In the convex case considered below, we denote the two points by
	\[
	P_\Omega=\operatorname*{arg\,max}_\Omega u,
	\qquad
	Q_\Omega=\operatorname*{arg\,max}_\Omega \phi_1.
	\]
	
	A geometric framework for locating these points is provided by the
	\emph{heart} of a convex domain, introduced by
	Brasco--Magnanini--Salani \cite{brascomagnaninisalani2011} and
	studied further by Brasco--Magnanini \cite{brascomagnanini2013}.
	This set is determined by the domain through reflection arguments
	and gives a common enclosure for maxima of positive solutions of
	a broad class of autonomous Dirichlet equations; in particular,
	it contains both \(P_\Omega\) and \(Q_\Omega\).
	Such a common geometric localization does not by itself determine
	the relative positions selected by two different equations within
	that enclosure. The present paper studies this question for two
	canonical problems, the torsion equation and the first eigenvalue
	equation of the Dirichlet Laplacian.

	Benson--Laugesen--Minion--Siudeja \cite{bensonlaugesen} showed that
	the torsion and ground-state hot spots are close but distinct in the
	half disk and the isosceles right triangle.
	For circular sectors, their half-disk example leaves open whether this
	close-but-not-identical behavior persists for every opening angle.
	Is the half disk representative of all sectors, or can the relative
	order of the two points change? Can the two hot spots coincide for
	some opening angle?
	These are the starting questions of our sector analysis.
	
	We study all unit sectors \(0<\alpha<2\pi\), including the nonconvex
	ones. In addition to comparing the two points, we determine how each
	location varies with the opening angle and describe its behavior as
	\(\alpha\downarrow0\) and \(\alpha\uparrow2\pi\).
	The next theorem collects the results of Sections~\ref{sec:sector-torsion}--\ref{sec:sector-comparison}: both hot spots are unique and lie on the axis, each moves strictly toward the vertex as the sector opens, their small-angle behaviour is governed by two different scales, and the slit-disk limit is described by two scalar equations.
	
	\begin{theorem}
		\label{thm:sector-summary}
		Let
		\[
		S(\alpha)=\{(r,\theta):0<r<1,\ |\theta|<\alpha/2\},
		\qquad 0<\alpha<2\pi.
		\]
		Then the torsion function and the positive first Dirichlet eigenfunction
		have unique maximum points
		\[
		P_{S(\alpha)}=(x_\alpha,0),
		\qquad Q_{S(\alpha)}=(y_\alpha,0)
		\]
		in polar coordinates. The following properties hold.
		\begin{enumerate}[label=\textup{(\roman*)},leftmargin=*]
			\item
			Both \(x_\alpha\) and \(y_\alpha\) are continuous and strictly decreasing
			in \(\alpha\). With \(\nu=\pi/\alpha\),
			\[
			y_\alpha=\frac{j'_{\nu,1}}{j_{\nu,1}},
			\]
			where \(j_{\nu,1}\) and \(j'_{\nu,1}\) are the first positive zeros
			of \(J_\nu\) and \(J_\nu'\), respectively.
			\item
			Let \(\beta=2\alpha/\pi\), and let \(a_1,a'_1\) be the first negative
			zeros of \(\operatorname{Ai}\) and \(\operatorname{Ai}'\). As
			\(\alpha\downarrow0\),
			\[
			\begin{aligned}
				1-x_\alpha
				&=\frac{\beta}{2}\log\frac1\beta
				-\frac{\beta}{2}\log\frac{\pi^3}{32}
				+O\!\left(\beta^2\log^2\frac1\beta\right),\\
				1-y_\alpha
				&=\frac{|a_1|-|a'_1|}{2}\,\beta^{2/3}
				+O(\beta^{4/3}).
			\end{aligned}
			\]
			In particular, \(x_\alpha>y_\alpha\) for all sufficiently small
			\(\alpha>0\).
			\item
			In the half disk, \(x_\pi<y_\pi\). Consequently, there exists at least
			one \(\alpha_0\in(0,\pi)\) for which \(x_{\alpha_0}=y_{\alpha_0}\).
			\item
			As \(\alpha\uparrow2\pi\),
			\[
			x_\alpha\longrightarrow x_*=t_*^2,
			\qquad
			y_\alpha\longrightarrow y_*=\frac{z_*}{\pi},
			\]
			where \(z_*\in(0,\pi/2)\) and \(t_*\in(0,1)\) are the unique solutions of
			\[
			\tan z_*=2z_*,
			\qquad
			3(1-t_*^8)\arctan t_*
			=t_*(3t_*^6-t_*^4-t_*^2+3).
			\]
			The ranges of the two coordinates are \((x_*,1)\) and \((y_*,1)\).
			The numbers \(x_*\) and \(\pi y_*\) are transcendental, and \(y_*\)
			is irrational.
		\end{enumerate}
		The limiting domain in \textup{(iv)} is the unit disk with the negative
		radial segment removed; the two faces of the slit retain Dirichlet
		boundary conditions.
	\end{theorem}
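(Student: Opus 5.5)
Since this theorem collects the results of Sections~\ref{sec:sector-torsion}--\ref{sec:sector-comparison}, I would prove it by treating the eigenfunction and the torsion function separately and then comparing them.

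\emph{Eigenfunction.} Here everything is explicit:
\[
\phi_1=J_\nu(j_{\nu,1}r)\cos(\nu\theta),\qquad \nu=\pi/\alpha .
\]
The maximum lies on \(\theta=0\) at the first critical point of \(J_\nu(j_{\nu,1}r)\), so \(y_\alpha=j'_{\nu,1}/j_{\nu,1}\). This critical point is unique on \((0,1)\) because \(J_\nu\) increases up to \(j'_{\nu,1}\) and then decreases to \(j_{\nu,1}\). Monotonicity in \(\alpha\) amounts to showing that \(j'_{\nu,1}/j_{\nu,1}\) increases in \(\nu\). For this I would use the known uniform asymptotics together with the Watson-type integral formulas for \(dj_{\nu}/d\nu\) and \(dj'_{\nu}/d\nu\), or alternatively known monotonicity results on ratios of Bessel zeros.

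The small-angle regime \(\nu\to\infty\) follows from Olver's expansions
\[
j_{\nu,1}=\nu+|a_1|(\nu/2)^{1/3}+\cdots,\qquad j'_{\nu,1}=\nu+|a'_1|(\nu/2)^{1/3}+\cdots .
\]
These give \(1-y_\alpha=(|a_1|-|a_1'|)2^{-1/3}\nu^{-2/3}+O(\nu^{-4/3})\), and \(2^{-1/3}\nu^{-2/3}=\beta^{2/3}/2\) since \(\beta=2/\nu\). For \(\alpha\uparrow 2\pi\) we have \(\nu\downarrow 1/2\), where \(J_{1/2}\propto \sin z/\sqrt z\). Then \(j_{1/2,1}=\pi\), and \(j'_{1/2,1}\) solves \(\tan z=2z\), which gives \(y_*\).

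\emph{Torsion.} I would write
\[
u=\frac{r^2}{4}\Bigl(\frac{\cos2\theta}{\cos\alpha}-1\Bigr)+\sum_{k\ \mathrm{odd}} c_k r^{k\nu}\cos(k\nu\theta)
\]
for \(\alpha\ne\pi/2\), with the usual modification at \(\alpha=\pi/2\), or use the Fourier sine series in \(\theta\) with radial ODE solutions. On the axis this gives
\[
u(r,0)=\frac{16\alpha^2}{\pi^3}\sum_{k\ \mathrm{odd}}\frac{(-1)^{(k-1)/2}}{k(k^2\nu^2-4)}\,\bigl(r^{k\nu}-r^2\bigr)\cdot(\text{sign normalization}).
\]
Symmetry of the maximizer: Steiner symmetrization/moving planes in \(\theta\) (polarization across rays) shows that \(u\) is even in \(\theta\) and decreasing in \(|\theta|\), so maxima lie on the axis. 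Uniqueness on the axis I would obtain by showing that \(\partial_r u(r,0)\) has one sign change. The cleanest route is to show that \(w=r\partial_r u-2u\), which is harmonic in \(S\), vanishes on the sides and equals \(r\partial_r u\le 0\) on the arc; this gives sign information, though convexity fails for \(\alpha>\pi\), so this needs care.

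Monotonicity in \(\alpha\) via scaling is the main obstacle. Under the substitution \(\theta\mapsto c\theta\), the domain derivative \(\dot u=\partial_\alpha u\) satisfies an equation whose sign must be controlled at the critical point, and \(\dot x_\alpha=-\partial_\alpha\partial_r u/\partial_r^2u\). I would first try to prove \(\partial_\alpha\partial_r u(x_\alpha,0)<0\) by a maximum principle for the harmonic-type function obtained from Hadamard's formula, using that \(\partial_r u<0\) beyond \(x_\alpha\). If that fails, I would fall back on the explicit series and show monotonicity of its unique root in \(\nu\) directly.

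The small-angle torsion asymptotic comes from the leading term: near the arc the domain behaves like a strip of width \(\alpha r\), so \(u\approx (\alpha^2r^2/8)\cdot\) a correction from the \(k=1\) term \(r^{\nu}\). Balancing \(r^2\) against \(r^\nu\) gives \(1-x\approx (2/\nu)\log\nu\) up to constants, and the \(\pi^3/32\) arises from the coefficient \(16/\pi^3\) and the factor \(2\). At \(\nu=1/2\), the limit \(x_*\) comes from the slit-disk torsion function via the conformal map \(z\mapsto\sqrt z\) (\(t=\sqrt r\)), and summing the series in closed form produces the arctangent equation.

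\emph{Comparison.} Part (iii) is a numerical evaluation in the half disk with rigorous error bounds, using the Bessel ratio \(j'_{1,1}/j_{1,1}\) against the torsion series truncated with a tail estimate; the intermediate value theorem then gives \(\alpha_0\) from (ii). The ranges follow from monotonicity together with the two limits. Transcendence of \(x_*\) and \(\pi y_*\) follows from Lindemann--Weierstrass/Gelfond--Schneider type arguments: if \(z_*\) were algebraic, then \(\tan z_*=2z_*\) would be algebraic, contradicting Lindemann. Similarly, \(\arctan t_*\) algebraic with \(t_*\) algebraic is impossible. Irrationality of \(y_*\) follows since \(y_*\) rational would make \(\tan(\pi y_*)\) algebraic while \(2\pi y_*\) is transcendental.
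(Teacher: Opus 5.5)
\textbf{Main gap: the torsion half.} Your harmonic function $w=ru_r-2u$ does give $w<0$, i.e.\ $H:=2u-ru_r>0$. On the axis, however, this only says that $T_\alpha/r^2$ is decreasing, and that places no bound on the number of zeros of $T_\alpha'$. What you need is $H_\alpha'>0$ on the axis, since $(T_\alpha'/r)'=-H_\alpha'/r^2$. The paper gets this from the observation that $2-r\partial_r$ annihilates the $r^2$ part of every radial mode, $2q_\mu-rq_\mu'=r^\mu/(\mu+2)$. The axial series therefore sums to
\[
H_\alpha(r)=\frac{4}{\pi r^2}\int_0^r s\arctan(s^\nu)\,ds,
\]
which is visibly increasing in $r$. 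This gives uniqueness and nondegeneracy for every $\alpha\in(0,2\pi)$, with no convexity. (Your maximum-principle idea can be pushed one step further: $r\partial_r w$ is harmonic, vanishes on the sides, and equals $-1-2u_r$ on the arc. This is negative, because comparison with $(1-r^2)/4$ and Hopf's lemma give $u_r>-\tfrac12$ there; the corners would still need checking. But you stop at $H>0$.)

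\textbf{Monotonicity of $x_\alpha$.} This is the more serious omission. You offer a Hadamard-variation plan and a fallback, but no mechanism that controls the sign of $\partial_\alpha\partial_r u$ at $x_\alpha$. In the paper the same $H_\alpha$ does the work. From $T_\alpha(1)=0$ one gets $T_\alpha'(r)/r=2\int_r^1H_\alpha(s)s^{-3}\,ds-H_\alpha(r)/r^2$. Evaluating $T_{\alpha_2}'$ at $x_{\alpha_1}$ reduces $x_{\alpha_2}<x_{\alpha_1}$ to strict decrease of $H_{\alpha_2}/H_{\alpha_1}$. That follows from the monotonicity of $r\mapsto\arctan(r^a)/\arctan(r^b)$ for $a<b$.

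\textbf{Monotonicity of $y_\alpha$.} You defer the increase of $j'_{\nu,1}/j_{\nu,1}$ in $\nu$ to unspecified Watson-type formulas or literature. The paper proves it by Sturm comparison. For $\nu_1<\nu_2$, set $u_i=J_{\nu_i}(j_{\nu_i,1}r)$ and $\mathcal W=r(u_1u_2'-u_2u_1')$. Then $\mathcal W'=(\nu_2^2-\nu_1^2)r^{-1}(1-cr^2)u_1u_2$ with $c=(j_{\nu_2,1}^2-j_{\nu_1,1}^2)/(\nu_2^2-\nu_1^2)$. The Rayleigh quotient with test function $u_2$ gives $c>1$, so $\mathcal W'$ changes sign once, from $+$ to $-$. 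Since $\mathcal W$ vanishes at both ends, it is positive on $(0,1)$, whence $u_2'(y_1)>0$.

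\textbf{Small-angle torsion asymptotic.} Your computation is only heuristic, and its leading term is off by a factor $2$. The $r^2$ coefficient of $T_\alpha$ is $\approx\pi^2/(8\nu^2)$; this is where $\sum_k(-1)^k(2k+1)^{-3}=\pi^3/32$ enters. Balancing it against the first mode gives $x_\alpha^{\nu-2}\approx\pi^3/(16\nu)$, i.e.\ $1-x_\alpha\approx\nu^{-1}\log(16\nu/\pi^3)$, not $(2/\nu)\log\nu$. The stated constant and remainder need the substitution $t=x_\alpha^{2/\beta}$ and alternating-series bounds showing $t=O(\beta)$. Your axial series also has the wrong prefactor: the coefficient of $r^2-r^{\mu_k}$ is $4(-1)^k/\bigl((2k+1)\pi(\mu_k^2-4)\bigr)$, and $\mu_k=2$ also occurs at $\alpha=3\pi/2$.

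\textbf{Continuity and limits.} The intermediate value step in (iii) and the ranges in (iv) require continuity of $x_\alpha$ and $y_\alpha$. The limit $x_\alpha\to x_*$ requires locally uniform convergence $T_\alpha'\to T_*'$ plus a strict sign change of $T_*'$ at its unique zero. None of this is addressed.

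The Bessel asymptotics for $y_\alpha$, the slit-disk value $y_*$, and the arithmetic arguments are fine.
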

	
	Figure~\ref{fig:sector} locates the two points in a sector.
	
	\begin{figure}[htbp]
		\centering
		\begin{tikzpicture}[scale=3.6]
			\fill[black!7] (0,0) -- (40:1) arc (40:-40:1) -- cycle;
			\draw[thick] (0,0) -- (40:1) arc (40:-40:1) -- cycle;
			\draw[dashed,black!55] (-0.05,0) -- (1.06,0);
			\draw (0.26,0) arc (0:40:0.26);
			\node at (0.33,0.12) {\scriptsize $\alpha/2$};
			\node[circle,fill,inner sep=0.55pt] at (0.56,0) {};
			\node[circle,fill,inner sep=0.55pt] at (0.70,0) {};
			\node[below=6pt] at (0.56,0) {\scriptsize $P_{S(\alpha)}$};
			\node[above=5pt] at (0.70,0) {\scriptsize $Q_{S(\alpha)}$};
			\node[below left] at (0,0) {\scriptsize $0$};
		\end{tikzpicture}
		\caption{The sector $S(\alpha)$ and the two hot spots, both of which lie on the axis.
			Their separation is exaggerated. The largest separation observed in the
			numerical exploration is approximately $0.00712442$; a global numerical
			maximum is not proved here.}
		\label{fig:sector}
	\end{figure}

	For torsion, the main analytic device is a positive integral
	representation of
	\(H_\alpha(r)=2T_\alpha(r)-rT_\alpha'(r)\), where
	\(T_\alpha(r)=u_\alpha(r,0)\).
	It yields uniqueness of the axial critical point and permits a
	comparison as the angle varies. For the ground state, the explicit
	Bessel representation identifies the unique maximum, while a
	Sturm--Liouville comparison gives monotonicity.
	Thus the sector analysis describes both locations throughout the angle
	range, even though their difference need not inherit their individual
	monotonicity. Numerical exploration suggests a largest separation of
	about \(0.0071244160\); global numerical extremality and uniqueness
	of a coincidence angle are not asserted in the theorem.
	
	The two parts of the paper answer different questions, but they rest on one
	observation: the two maxima react to a change of width at different rates,
	so a feature that is negligible for one equation can decide the other. In
	the sharp-sector limit this produces two distinct scales,
	\(\beta\log(1/\beta)\) for the torsion maximum and \(\beta^{2/3}\) for the
	ground-state maximum, which is the source of the change of order described
	above. The construction of Section~\ref{sec:convex-diameter} does not
	repeat that asymptotic analysis; it exploits the same contrast in a different
	regime, where the torsion profile is almost flat and the ground-state
	profile is a sine whose maximum is nondegenerate. The sector family is where that contrast
	can be computed explicitly.
	
	\vskip 0.3cm
	
	\subsection{Main result: sharp bound for the distance}
	The results on circular sectors lead to the second, more global question of the paper. What is the optimal diameter-normalised upper bound for the distance between the torsion hot spot and the ground-state hot spot among bounded convex planar domains with a line of symmetry?
	
	That the two points can be separated at the scale of the diameter is not
	new. Beck \cite[Proposition~2.4]{beck2020} constructed convex domains of high
	eccentricity whose two maxima are at a distance comparable to the diameter, so the
	quantity we are about to bound is not small in general, and no argument may
	presuppose that the two maxima are close. What is not known is its exact value. Symmetry
	does not by itself prevent a large separation --- Theorem~\ref{thm:main}
	below shows that the supremum of the ratio over the symmetric class is
	\(1/2\) --- but it
	makes the upper bound accessible, because the heart of a symmetric domain
	collapses onto a single interval of the axis.
	
	Since the distance scales linearly with the domain, the natural normalization is by the diameter.  We define
	\[
	M_{\rm sym}:=
	\sup_{\substack{\Omega\subset\mathbb R^2\ \text{bounded and convex}\\
			\Omega\ \text{symmetric with respect to a line}}}
	\frac{|P_\Omega-Q_\Omega|}{\operatorname{diam}(\Omega)}.
	\]
	The main result of the paper is that the sharp answer is exactly \(1/2\).
	
	\begin{theorem}\label{thm:main}
		One has
		\[
		M_{\rm sym}=\frac12.
		\]
		Moreover the supremum is not attained: for every bounded convex planar
		domain \(\Omega\) with a line of symmetry,
		\[
		|P_\Omega-Q_\Omega|<\frac12\operatorname{diam}(\Omega).
		\]
	\end{theorem}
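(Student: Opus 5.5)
The proof splits into a strict upper bound, coming from localization in the heart, and a sharpness construction.

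\emph{Upper bound.} Let \(\Omega\) be convex and symmetric with respect to a line \(\ell\), which we take to be the \(x\)-axis. Write \(\Omega\cap\ell=(a,b)\), so that \(b-a\le\operatorname{diam}(\Omega)\).

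By reflection uniqueness, both \(P_\Omega\) and \(Q_\Omega\) lie on \(\ell\). Indeed, the reflected torsion function and the reflected ground state solve the same problems, and both maxima are unique. The heart \(\heartsuit(\Omega)\) of Brasco--Magnanini--Salani \cite{brascomagnaninisalani2011} contains both points and is contained in \(\ell\). More precisely, reflecting \(\Omega\) across lines perpendicular to \(\ell\) shows that both maxima lie in the segment \(\{(x,0): c_-\le x\le c_+\}\). Here \(c_+\) is the smallest value such that reflecting the cap \(\Omega\cap\{x>c\}\) across \(x=c\) keeps it inside \(\Omega\) for all \(c\ge c_+\), and \(c_-\) is defined symmetrically from the left.

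The key geometric claim is the enclosure
\[
c_+-c_-<\tfrac12\operatorname{diam}(\Omega),
\]
and I will aim to prove \(c_+-c_-\le\tfrac12(b-a)\) with strict inequality. A standard fact for convex sets is that the reflection of the cap \(\{x>c\}\) fits whenever \(c\ge(a+b)/2\)? That is false in general, so I argue differently.

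First, \(c_+\le b-\tfrac12 w\) and similar one-sided estimates are too weak. Instead I use that the reflected cap always fits at least for \(c\) with \(c\ge\) midpoint of the projection... Honestly, the cleanest route is as follows. Write \(\Omega=\{(x,y): a<x<b,\ |y|<h(x)\}\) with \(h\) concave and \(h(a)=h(b)=0\) in the limit sense. The cap \(\{x>c\}\) reflects inside \(\Omega\) if and only if \(h(2c-x)\ge h(x)\) for \(x\in(c,b)\).

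Let \(m\) be a maximum point of \(h\). For \(c\ge m\), the point \(2c-x\) lies in \([2c-b,c]\). If also \(2c-b\ge a\), then concavity of \(h\), together with the fact that \(h\) is decreasing on \([m,b]\), gives the required inequality on the part of the interval to the right of \(m\). On the part to the left of \(m\), one compares using the chord. This shows \(c_+\le\max(m,(a+b)/2)\), and symmetrically \(c_-\ge\min(m,(a+b)/2)\).

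Hence \(c_+-c_-\le|m-(a+b)/2|<(b-a)/2\), where the last inequality is strict because \(m\in(a,b)\). This gives \(|P_\Omega-Q_\Omega|\le c_+-c_-<\tfrac12(b-a)\le\tfrac12\operatorname{diam}\Omega\). I expect verifying the reflection inclusion near \(m\) to be a careful but routine concavity argument.

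\emph{Sharpness.} I take thin domains \(\Omega_\varepsilon=\{|y|<\varepsilon h_\varepsilon(x),\ 0<x<1\}\). The profile is \(h_\varepsilon=\min(g,1)+\delta_\varepsilon\psi\), where \(g\) is a concave profile that rises steeply to \(1\) near both ends. The perturbation \(\psi\) is a concave bump peaking near \(x=1-\eta\), with \(\delta_\varepsilon\to0\) and \(\delta_\varepsilon\gg\varepsilon^2\).

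For torsion, the thin-domain expansion gives \(u\approx\tfrac12(\varepsilon^2h^2-y^2)\), with corrections of order \(\varepsilon^2\cdot\varepsilon^2\). The maximum therefore sits where \(h\) is maximal, at the bump, up to \(o(1)\), so \(P\to(1-\eta,0)\).

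For the ground state, \(\phi_1\) is governed by the effective operator \(-\partial_x^2+\pi^2/(4\varepsilon^2h^2)\). The perturbation of the potential is of size \(\delta_\varepsilon/\varepsilon^2\). I need the ground state to stay close to the flat profile centred at \(1/2\), which requires the potential perturbation to be small compared with the spectral gap. This pulls in the opposite direction from the torsion requirement, and reconciling the two is the main obstacle.

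The resolution I plan is to let the plateau be almost flat, with \(h\equiv1\) on \([\eta,1-\eta]\), so that the ground state is nearly \(\sin(\pi x)\) on that plateau. The scale \(\delta_\varepsilon\) is then chosen with \(\varepsilon^4\ll\delta_\varepsilon\ll\varepsilon^2\). With this choice the torsion maximum still feels the bump, because the bump beats the \(O(\varepsilon^4)\) corrections. The ground state does not feel it, because a first-order perturbation of a nondegenerate maximum moves it by \(O(\delta/\varepsilon^2)\to0\).

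Thus \(|P-Q|\to\tfrac12-\eta\), while \(\operatorname{diam}\to1\). Letting \(\eta\to0\) shows \(M_{\rm sym}\ge\tfrac12\), and combined with the strict upper bound this proves the theorem.
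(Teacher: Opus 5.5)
\textbf{Upper bound.} Your enclosure rests on a false geometric claim. Here I use your notation, $(a,b)=\Omega\cap\ell$, and $m$ a maximizer of $h$. The inclusion $c_+\le\max\bigl(m,\tfrac{a+b}2\bigr)$ already fails for $\Omega=\{0<x<1,\ |y|<h(x)\}$ with $h(x)=2x$ on $[0,\tfrac12]$ and $h(x)=1-\tfrac1{100}(x-\tfrac12)$ on $[\tfrac12,1]$.

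\begin{itemize}
\item Here $m=\tfrac12=\tfrac{a+b}2$, so your bound $c_+-c_-\le|m-\tfrac{a+b}2|$ would force $P_\Omega=Q_\Omega=(\tfrac12,0)$.
\item The right cap at $c=0.6$ does not reflect into $\Omega$: the point $(0.9,0.9)\in\Omega$ goes to $(0.3,0.9)\notin\Omega$.
\item The left cap at $x=\tfrac12$ does reflect into $\Omega$. The strong maximum principle and Hopf's lemma on the reflection line then give $u_x(\tfrac12,0)>0$, so $P_\Omega\neq(\tfrac12,0)$.
\end{itemize}

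The ``chord comparison'' to the left of $m$ cannot work. For $c$ just above $m$, the reflected points land on the increasing side of $h$, where $h$ can be much smaller. Monotonicity of $h$ only shows that right caps reflect for $c\ge\tfrac{b+m}2$ (the reflected points then lie in $[m,x)$), and left caps for $c\le\tfrac{a+m}2$. This is the paper's window, and when the maximizer is unique its length is exactly $\tfrac{b-a}2$. So strictness cannot be read off from the enclosure. Your appeal to $m\in(a,b)$ does not supply it, and that condition can fail anyway, e.g.\ for a triangle whose base is perpendicular to $\ell$.

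You need the extra step of Proposition~\ref{prop:upperbound}:
\begin{itemize}
\item if $m\in\{a,b\}$, one window endpoint lies on $\partial\Omega$;
\item otherwise, reflect the left cap at $x=\tfrac{a+m}2$, show the reflected difference is positive, and apply Hopf at the interior point $(\tfrac{a+m}2,0)$ of the reflection line to get $v_x>0$ there. Hence no hot spot sits at that endpoint.
\end{itemize}

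\textbf{Sharpness.} Your scale count is inconsistent. A relative bump $\delta_\varepsilon\psi$ changes the axial torsion value $\tfrac12\varepsilon^2h^2$ by about $\varepsilon^2\delta_\varepsilon\psi$, not by $\delta_\varepsilon$. With $\delta_\varepsilon\ll\varepsilon^2$ this change is $o(\varepsilon^4)$, so it is swamped by the $O(\varepsilon^4)$ corrections you quote. The window $\varepsilon^4\ll\delta_\varepsilon\ll\varepsilon^2$ therefore does not resolve the obstacle you identified. What is needed is an error of size $o(\varepsilon^2\delta_\varepsilon)$ at the comparison points, and that is available only when the profile is flat to high order.

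The paper achieves this as follows. It takes $f_\varepsilon=1+\varepsilon^{2+\sigma}\tau_\varepsilon$ with vertical ends and no tapers. The barrier $\frac{\varepsilon^2f_\varepsilon^2-y^2}{2(1-\varepsilon^2\|f_\varepsilon'\|_\infty^2)}$ then errs by $O(\varepsilon^4\|f_\varepsilon'\|_\infty^2)=o(\varepsilon^{4+\sigma})$. Inscribed rectangles, together with the exponential end estimate of Lemma~\ref{lem:long-rectangle}, give the matching lower bound.

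Your steep tapers make $\|h_\varepsilon'\|_\infty\gtrsim1$, so any global barrier errs by order $\varepsilon^4$, and you would need a localized estimate on the plateau that you have not supplied. The tapers also destroy the sandwich condition $\|h_\varepsilon-1\|_\infty=o(\varepsilon^2)$, which is what makes the ground-state step rigorous (Lemma~\ref{lem:near-rectangle-eigen}). That leaves you with only the effective-potential heuristic. Dropping the tapers and placing the bump near one end removes both difficulties at once.
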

	
	The symmetry assumption is essential to the present proof.  If
	\[
	M:=
	\sup_{\substack{\Omega\subset\mathbb R^2\ \mathrm{bounded\ and\ convex}}}
	\frac{|P_\Omega-Q_\Omega|}{\operatorname{diam}(\Omega)},
	\]
	then the sharpness construction in Theorem~\ref{thm:main}
	gives \(M\ge 1/2\), but the reverse inequality is not proved here. We expect the same bound without any symmetry assumption.
	
	\begin{conjecture}\label{conj:general-convex}
		For every bounded planar convex domain \(\Omega\),
		\[
		|P_\Omega-Q_\Omega|
		\le \frac12\operatorname{diam}(\Omega).
		\]
	\end{conjecture}

	The upper bound follows from standard localization in the heart,
	followed by a short geometric estimate.
	Writing a symmetric convex domain as
	\[
	\Omega=\{(x,y):0<x<L,\ |y|<h(x)\},
	\qquad \operatorname*{arg\,max}_{[0,L]}h=[a,b],
	\]
	we show that its heart is contained in
	\[
	\left[\frac b2,\frac{L+a}{2}\right]\times\{0\}.
	\]
	This gives a common interval of length \((L-(b-a))/2\) for the two
	hot spots. The strict inequality for a fixed domain follows from
	one endpoint reflection comparison; the details are included in
	Proposition~\ref{prop:upperbound}.

	Sharpness is obtained from a family of very thin symmetric convex domains
	\[
	\Omega_\varepsilon=\{(x,y):0<x<1,\ |y|<\varepsilon f_\varepsilon(x)\},
	\]
	where \(f_\varepsilon\) is almost constant but has a concave bump of relative
	size \(\varepsilon^{2+\sigma}\), for any fixed \(\sigma>0\), near the left
	endpoint.  The first eigenfunction keeps the longitudinal profile of the
	rectangle: after the anisotropic scaling \(y=\varepsilon Y\) the profile
	converges to \(\sin(\pi x)\), whose peak at \(x=1/2\) is nondegenerate, so
	the ground-state hot spot still tends to \((1/2,0)\).  The torsion function,
	by contrast, senses the local width: on the axis it behaves like the square
	of the half-width, so the bump forces its maximum toward the left endpoint.
	Thus
	\[
	P_{\Omega_\varepsilon}\to (0,0),
	\qquad
	Q_{\Omega_\varepsilon}\to (\frac{1}{2},0),
	\]
	while \(\operatorname{diam}(\Omega_\varepsilon)\to1\), proving sharpness.
	
	\vskip 0.3cm
	\subsection{Other results}
	
	We also discuss a related sharp-domain model: isosceles triangles
	\[
	\Omega_h=\{(x,y):0<x<1,\ |y|<hx\}.
	\]
	For sufficiently small \(h>0\), we prove that the torsion hot spot lies closer to the base than the ground-state hot spot.  The proof again compares two different scales.  The torsion hot spot is within \(O(h\log(1/h))\) of the base, while the ground-state hot spot is at distance asymptotic to a positive multiple of \(h^{2/3}\).  This parallels the small-angle sector calculation and gives another example in which torsion and ground-state maxima respond differently to sharp geometry.

	The same localization is available for \(-\Delta u=f(u)\) with homogeneous Dirichlet conditions, and we expect the enclosure argument to extend to monotone nonlinearities; we do not pursue this here.

	\section{Sectorial torsion hot spots: expansion and uniqueness}
	\label{sec:sector-torsion}
	
	Throughout this section,
	\[
	S(\alpha)=\{(r,\theta):0<r<1,\ |\theta|<\frac{\alpha}{2}\},
	\qquad 0<\alpha<2\pi.
	\]
	Set
	\[
	\nu=\frac{\pi}{\alpha},
	\qquad
	\mu_k=(2k+1)\nu,
	\qquad k=0,1,2,\ldots .
	\]
	
	The even Dirichlet modes \(\cos(\mu_k\theta)\) are complete in the even subspace of \(L^2(-\alpha/2,\alpha/2)\), so the torsion function expands in them, with radial profiles that can be computed one mode at a time.
	
	\begin{proposition}
		The torsion function in \(S(\alpha)\) is
		\begin{equation}\label{eq:torsion-series}
			u_\alpha(r,\theta)
			=
			\sum_{k=0}^{\infty}
			\frac{4(-1)^k}{(2k+1)\pi}
			q_{\mu_k}(r)\cos(\mu_k\theta),
		\end{equation}
		where
		\begin{equation}\label{eq:qmu}
			q_\mu(r)=\frac{r^2-r^\mu}{\mu^2-4},
			\qquad \mu\ne2,
		\end{equation}
		and
		\begin{equation}\label{eq:q2}
			q_2(r)=-\frac14 r^2\log r.
		\end{equation}
	\end{proposition}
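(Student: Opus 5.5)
The plan is to expand the unknown in the even Dirichlet angular modes and solve a radial ODE for each mode. By the symmetry \(\theta\mapsto-\theta\) and uniqueness of the Dirichlet problem, \(u_\alpha\) is even in \(\theta\). For each \(r\) we can therefore write
\[
u_\alpha(r,\theta)=\sum_k c_k(r)\cos(\mu_k\theta),
\qquad
c_k(r)=\frac{2}{\alpha}\int_{-\alpha/2}^{\alpha/2}u_\alpha(r,\theta)\cos(\mu_k\theta)\,d\theta .
\]
We also expand the constant source in the same basis. Since \(\mu_k\alpha/2=(2k+1)\pi/2\), a direct integration gives
\[
\frac{2}{\alpha}\int_{-\alpha/2}^{\alpha/2}\cos(\mu_k\theta)\,d\theta=\frac{4}{\alpha\mu_k}\sin\frac{(2k+1)\pi}{2}=\frac{4(-1)^k}{(2k+1)\pi}.
\]
So \(1=\sum_k \frac{4(-1)^k}{(2k+1)\pi}\cos(\mu_k\theta)\) in \(L^2\), which produces the coefficients appearing in \eqref{eq:torsion-series}.

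Next we derive and solve the radial equation for \(c_k\). Multiply \(-\Delta u_\alpha=1\), written as \(u_{rr}+r^{-1}u_r+r^{-2}u_{\theta\theta}=-1\), by \(\cos(\mu_k\theta)\) and integrate in \(\theta\). Integrating by parts twice uses \(u_\alpha=0\) and \(\cos(\mu_k\theta)=0\) at \(\theta=\pm\alpha/2\), so the boundary terms vanish and we get
\[
c_k''+\frac1r c_k'-\frac{\mu_k^2}{r^2}c_k=-\frac{4(-1)^k}{(2k+1)\pi}.
\]
The boundary conditions are \(c_k(1)=0\), from \(u_\alpha=0\) on the arc, and boundedness of \(c_k\) as \(r\to0\), since \(u_\alpha\) is bounded and continuous up to the vertex. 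It then suffices to check that \(q_\mu''+q_\mu'/r-\mu^2q_\mu/r^2=-1\):
\begin{itemize}[leftmargin=*]
\item \(r^2\) contributes \((4-\mu^2)/(\mu^2-4)=-1\), while \(r^\mu\) is homogeneous and contributes nothing;
\item for \(\mu=2\), \(-\tfrac14 r^2\log r\) yields \(-1\) by a direct computation.
\end{itemize}
In both cases \(q_\mu(1)=0\) and \(q_\mu\) is bounded near \(0\), since \(\mu>1/2>0\). The homogeneous solutions are \(r^{\pm\mu}\). Boundedness at \(0\) kills \(r^{-\mu}\), and \(c_k(1)=0\) kills \(r^{\mu}\), so \(c_k=\frac{4(-1)^k}{(2k+1)\pi}q_{\mu_k}\) is the unique admissible solution.

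The step I expect to need the most care is justifying the termwise manipulation near the corner \(r=0\) and the arc, together with convergence of the series. To avoid differentiating the series, I will work with the coefficient functions \(c_k(r)\) directly. They are smooth on \((0,1)\) because \(u_\alpha\) is smooth inside, and the ODE above holds classically there; this uses only interior regularity and continuity of \(u_\alpha\) up to the straight edges. The asymptotics of \(c_k\) at \(r=0\) and \(r=1\) follow from the continuity of \(u_\alpha\) on \(\overline{S(\alpha)}\), with \(u_\alpha=0\) at the vertex. Finally, \(|q_{\mu_k}|\le C/\mu_k^2\) uniformly in \(r\), so the series converges absolutely and uniformly. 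Completeness of \(\{\cos\mu_k\theta\}\) in the even subspace then identifies it with \(u_\alpha\) for each \(r\).
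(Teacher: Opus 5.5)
Your proposal is correct and takes essentially the same route as the paper. You expand in the even Dirichlet modes \(\cos(\mu_k\theta)\), project \(-\Delta u_\alpha=1\) onto each mode, and fix \(a_kq_{\mu_k}\) by boundedness at the vertex and vanishing at the arc; you then identify the uniformly convergent series with \(u_\alpha\) via completeness and continuity. One small precision: killing the boundary term \(u_\theta\cos(\mu_k\theta)\) at \(\theta=\pm\alpha/2\), and differentiating \(c_k\) under the integral, uses \(C^2\) regularity of \(u_\alpha\) up to the open straight edges (standard at flat boundary portions), not merely continuity there. You can bypass this entirely by testing the weak formulation against \(\zeta(r)\cos(\mu_k\theta)\) with \(\zeta\in C_c^\infty(0,1)\).
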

	
	\begin{proof}
		For each fixed \(0<r<1\), the angular profile
		\(u_\alpha(r,\cdot)\) vanishes at \(\theta=\pm\alpha/2\)
		and is even by symmetry and uniqueness of the Dirichlet solution.
		The even Dirichlet modes
		\[
		\cos(\mu_k\theta),\qquad
		\mu_k=\frac{(2k+1)\pi}{\alpha},\qquad k=0,1,2,\ldots,
		\]
		form a complete orthogonal system in the even subspace of
		\(L^2(-\alpha/2,\alpha/2)\). Thus we may expand
		\[
		u_\alpha(r,\theta)=\sum_{k=0}^{\infty}R_k(r)\cos(\mu_k\theta),
		\qquad
		1=\sum_{k=0}^{\infty}a_k\cos(\mu_k\theta),
		\]
		with both expansions understood in
		\(L^2(-\alpha/2,\alpha/2)\), where
		\[
		a_k=
		\frac{\displaystyle\int_{-\alpha/2}^{\alpha/2}\cos(\mu_k\theta)\,d\theta}
		{\displaystyle\int_{-\alpha/2}^{\alpha/2}\cos^2(\mu_k\theta)\,d\theta}
		=\frac{4(-1)^k}{(2k+1)\pi}.
		\]
		Projecting the equation \(-\Delta u_\alpha=1\) onto each angular
		mode gives
		\[
		-R_k''-\frac1rR_k'+\frac{\mu_k^2}{r^2}R_k=a_k,
		\qquad R_k(1)=0.
		\]
		The corresponding homogeneous equation has solutions
		\(r^{\mu_k}\) and \(r^{-\mu_k}\), and boundedness at the vertex
		excludes the latter. For \(\mu_k\ne2\), a particular solution is
		\(\frac{a_kr^2}{\mu_k^2-4}\), so
		\[
		R_k(r)=\frac{a_k}{\mu_k^2-4}r^2+C_kr^{\mu_k}.
		\]
		The boundary condition at \(r=1\) yields
		\(C_k=-\frac{a_k}{\mu_k^2-4}\), and hence \(R_k=a_kq_{\mu_k}\).
		When \(\mu_k=2\), direct substitution and the same boundary
		conditions give
		\[
		R_k(r)=-\frac14a_kr^2\log r=a_kq_2(r).
		\]
		
		For fixed $\alpha$, the summands are bounded in absolute value by
		$\frac{C_\alpha}{(k+1)^3}$, so the series converges uniformly on the closed
		sector, with each summand extended by zero at the vertex.
		Its continuous sum agrees with $u_\alpha(r,\cdot)$ in
		$L^2(-\alpha/2,\alpha/2)$ for every $r\in(0,1)$, and hence
		pointwise. This proves \eqref{eq:torsion-series}.
	\end{proof}
	
	Restricting the expansion to the axis reduces the problem to a one-dimensional equation for \(T_\alpha\).
	
	\begin{corollary}\label{cor:axis-equation}
		Let
		\[
		T_\alpha(r)=u_\alpha(r,0).
		\]
		Then
		\[
		T_\alpha(r)
		=
		\sum_{k=0}^{\infty}
		\frac{4(-1)^k}{(2k+1)\pi}
		\frac{r^2-r^{\mu_k}}{\mu_k^2-4},
		\]
		where, in this formula and its derivative below, a term with
		\(\mu_k=2\) is understood by using \(q_2\) from \eqref{eq:q2} and its
		derivative.  The axial critical points are precisely the roots in
		\((0,1)\) of
		\begin{equation}\label{eq:x-alpha-equation}
			\sum_{k=0}^{\infty}
			\frac{4(-1)^k}{(2k+1)\pi}
			\frac{2r-\mu_k r^{\mu_k-1}}{\mu_k^2-4}=0,
		\end{equation}
	\end{corollary}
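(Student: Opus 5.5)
The plan is to restrict the series \eqref{eq:torsion-series} to \(\theta=0\), where \(\cos(\mu_k\theta)=1\), and then justify term-by-term differentiation in \(r\) on \((0,1)\). Setting \(\theta=0\) gives the stated formula for \(T_\alpha\) immediately. For the derivative, note that
\[
q_\mu'(r)=\frac{2r-\mu r^{\mu-1}}{\mu^2-4}.
\]
For \(\mu=\mu_k\ne2\) this is the general term of \eqref{eq:x-alpha-equation}, and for \(\mu_k=2\) it is replaced by \(q_2'(r)\).

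To differentiate termwise, I will fix a compact interval \([\delta,1-\delta]\subset(0,1)\). For large \(k\) the bound
\[
|q_{\mu_k}'(r)|\le \frac{2+\mu_k}{\mu_k^2-4}
\]
holds uniformly on this interval, since \(r^{\mu_k-1}\le 1\) once \(\mu_k\ge1\). This is \(O(1/k)\), and with the coefficient \(4/((2k+1)\pi)\) the series of derivatives is dominated by \(C/k^2\). It therefore converges uniformly on \([\delta,1-\delta]\); in fact on all of \([\delta,1]\), so boundedness away from \(1\) is not even needed. The sum of values converges at one point by the proposition, so the standard theorem gives that \(T_\alpha\) is \(C^1\) with \(T_\alpha'\) equal to the differentiated series.

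Axial critical points are by definition the zeros of \(T_\alpha'\) in \((0,1)\), which are exactly the roots of \eqref{eq:x-alpha-equation}. If "critical point" is read as a critical point of \(u_\alpha\) on the axis, I will add one observation. By evenness, \(\partial_\theta u_\alpha(r,0)=0\), which follows from the symmetry \(u_\alpha(r,\theta)=u_\alpha(r,-\theta)\) and interior smoothness. Hence \(\nabla u_\alpha(r,0)=0\) if and only if \(\partial_r u_\alpha(r,0)=T_\alpha'(r)=0\).

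The only mildly delicate point is the uniform derivative bound near \(r\to0\) when \(\mu_0<1\), that is when \(\alpha>\pi\). There \(r^{\mu_0-1}\) blows up, but this affects only finitely many terms, and those are harmless on compact subsets of \((0,1)\). The tail estimate above handles all remaining \(k\).
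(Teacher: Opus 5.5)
Your proposal is correct and follows essentially the same route as the paper: you set $\theta=0$, justify termwise differentiation by a summable bound on the tail of the derivative series, and use $\partial_\theta u_\alpha(r,0)=0$ from evenness to identify axial critical points with zeros of $T_\alpha'$. Your tail bound $\frac{4}{(2k+1)\pi}|q_{\mu_k}'(r)|\le Ck^{-2}$ is cruder than the paper's $C(k^{-3}+k^{-2}R^{\mu_k})$, but it is already summable, and it gives uniform convergence on $[\delta,1]$ instead of only on compact subintervals of $(0,1)$.
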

	
	\begin{proof}
		Substituting \(\theta=0\) in \eqref{eq:torsion-series} gives the
		formula for \(T_\alpha\). For every \([\rho,R]\Subset(0,1)\) and
		all sufficiently large \(k\),
		\[
		\sup_{r\in[\rho,R]}|a_kq_{\mu_k}'(r)|
		\le C_{\alpha,\rho,R}\bigl(k^{-3}+k^{-2}R^{\mu_k}\bigr).
		\]
		Thus the derivative series converges uniformly on compact
		subintervals of \((0,1)\), and termwise differentiation is valid.
		Symmetry gives \(\partial_\theta u_\alpha(r,0)=0\), so an axial
		point is critical precisely when \(T_\alpha'(r)=0\).
	\end{proof}
	
	For the half disk the axial series can be summed explicitly. The closed formula is used in Section~\ref{sec:sector-comparison} and also serves as a check on the expansion itself.
	
	\begin{corollary}\label{cor:half-disk}
		When \(\alpha=\pi\),
		\[
		S(\pi)=\{(x,y):x^2+y^2<1,\ x>0\},
		\]
		and the torsion function is
		\begin{align}\label{eq:half-disk-closed}
			u(x,y)=\frac1{4\pi}\bigg[&
			-2\pi x^2
			-2x\bigl((x^2+y^2)^{-1}-1\bigr)\notag\\
			&+\Bigl(2+(x^2-y^2)\bigl((x^2+y^2)^{-2}+1\bigr)\Bigr)
			\arctan\frac{2x}{1-(x^2+y^2)}\notag\\
			&+xy\bigl((x^2+y^2)^{-2}-1\bigr)
			\log\frac{x^2+(1+y)^2}{x^2+(1-y)^2}\bigg].
		\end{align}
		At the origin, the right-hand side is understood by continuity.
	\end{corollary}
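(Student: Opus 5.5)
The plan is to verify the closed formula directly rather than sum the series. Write \(u=u_{\mathrm{part}}+w\) with \(u_{\mathrm{part}}=-\tfrac12 x^2\), which is the first term \(-2\pi x^2/(4\pi)\). It then suffices to show three things: the remaining bracket \(w\) is harmonic in \(S(\pi)\), it equals \(\tfrac12x^2\) on the boundary, and it is bounded near the origin. Uniqueness of the bounded Dirichlet solution, with the vertex a removable point for bounded harmonic functions, then identifies \(u\) with the torsion function.

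\textbf{Harmonicity.} I would recognise each term of \(w\) as the real part of a holomorphic function of \(z=x+iy\). We have \((x^2+y^2)^{-1}x=\operatorname{Re}(1/z)\), so \(2x(1/|z|^2-1)=2\operatorname{Re}(1/z)-2\operatorname{Re} z\). Next,
\[
\arctan\frac{2x}{1-|z|^2}=\operatorname{Im}\log\frac{1+iz}{1-iz}\quad\text{(up to sign)},
\qquad
\log\frac{x^2+(1+y)^2}{x^2+(1-y)^2}=2\operatorname{Re}\log\frac{1-iz}{1+iz}
\quad\text{(up to sign)}.
\]
Also \(x^2-y^2=\operatorname{Re} z^2\), \(xy=\tfrac12\operatorname{Im} z^2\), and \((x^2-y^2)/|z|^4=\operatorname{Re}(z^{-2})\), \(xy/|z|^4=-\tfrac12\operatorname{Im}(z^{-2})\). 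The combinations
\[
(x^2-y^2)(|z|^{-4}+1)\,\Theta + xy(|z|^{-4}-1)\,\Lambda ,
\]
with \(\Theta,\Lambda\) the harmonic conjugate pair above, should therefore assemble into \(\operatorname{Im}\bigl[(z^2+z^{-2})\cdot G(z)\bigr]\) with \(G=\log\frac{1+iz}{1-iz}\). This makes \(w\) the imaginary (or real) part of
\[
F(z)=c_1\,(z-z^{-1}) + (2+z^2+z^{-2})\,\log\frac{1+iz}{1-iz},
\]
suitably normalised. Checking the exact signs and factors of \(\tfrac12\) is routine bookkeeping. Equivalently, one can sum the axial series for \(\alpha=\pi\), where \(\mu_k=2k+1\) is never \(2\); the sums \(\sum (-1)^k r^{2k+1}/((2k+1)((2k+1)^2-4))\) decompose by partial fractions into arctan-type series. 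This gives a cross-check on the axis.

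\textbf{Boundary values.} On the arc \(|z|=1\) we have \(1/|z|^2-1=0\) and \(|z|^{-4}-1=0\), and \(\arctan(2x/0^+)=\pi/2\) for \(x>0\). So the bracket becomes \(-2\pi x^2+(2+2(x^2-y^2))\pi/2=\pi(1-x^2-y^2)=0\). On the segment \(x=0\), \(0<|y|<1\), the \(x\)-factors kill the second and fourth terms, the arctan vanishes, and the first term vanishes, so \(u=0\).

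\textbf{Main obstacle: the vertex.} The terms \(2x/|z|^2\), \((x^2-y^2)|z|^{-4}\arctan(\cdot)\) and \(xy|z|^{-4}\log(\cdot)\) are individually singular at \(0\). Expanding \(\arctan\frac{2x}{1-r^2}=2x+O(r^3)\) and \(\log(\cdot)=4y+O(r^3)\), the \(r^{-1}\) singularities combine as
\[
-2x/r^2+2x(x^2-y^2)/r^4+4xy^2/r^4=-2x/r^2+2x(x^2+y^2)/r^4=0 .
\]
The remaining terms are bounded, in fact \(O(r^2\log r)\)-free because \(\mu_k\ne2\), and tend to \(0\). I would carry out this cancellation carefully through cubic order, using the holomorphic form of \(F\): the Laurent pole of \(z^{-2}\log\frac{1+iz}{1-iz}\sim 2i z^{-1}\) must cancel against the \(z^{-1}\) term. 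This gives continuity and boundedness at the origin and completes the identification.
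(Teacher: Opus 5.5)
Your argument is correct, but it runs in the opposite direction from the paper's. The paper \emph{derives} the formula. At \(\alpha=\pi\) it writes the separated-variables series as \(u=\frac4\pi\{r^2\operatorname{Re}G(e^{i\theta})-\operatorname{Re}G(z)\}\). It then sums \(G\) in closed form by partial fractions and the arctan series, evaluates \(\operatorname{Re}G(e^{i\theta})=-\frac{\pi}{8}\cos^2\theta\) by continuity, and reads off the interior real part from the logarithmic form of \(\arctan z\). Since the series is already known to be the torsion function, boundary values and exceptional boundary points need no separate treatment. The cost is reliance on the expansion and a limit on the circle of convergence.

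You instead \emph{verify} the given formula, which is self-contained and uses only uniqueness for the Dirichlet problem. Your bookkeeping closes exactly: \(4\pi w=\operatorname{Im}F\) with \(c_1=2i\), i.e. \(F(z)=2i(z-z^{-1})+(z+z^{-1})^2\operatorname{Log}\frac{1+iz}{1-iz}\), which is \(-16i\) times the paper's \(G\). Moreover \(F(z)=\frac{16i}{3}z+O(z^3)\), so \(F\) is holomorphic in the whole unit disk. Your cancellation at the origin is therefore a removable singularity, giving continuity with value \(0\) there. Note that the origin is a boundary point of \(S(\pi)\), so continuity, not interior removability, is what uniqueness uses.

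The price of the verification route is that every boundary point must be checked, and you skipped the corners \(z=\pm i\). There \(\log\frac{x^2+(1+y)^2}{x^2+(1-y)^2}\) diverges and \(\arctan\frac{2x}{1-r^2}\) jumps from \(\pi/2\) to \(0\), so neither your arc computation nor your segment computation applies. The repair is one line. The factor \((z+z^{-1})^2=(z^2+1)^2/z^2\) has a double zero at \(\pm i\), which absorbs the logarithmic singularity of \(\operatorname{Log}(1\pm iz)\). Hence \(F\) extends continuously with \(F(\pm i)=\mp4\) real, and \(w\to0=\frac12x^2\) there; boundedness alone would already suffice. With this added, the proof is complete.
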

	
	\begin{proof}
		The proof is given in Appendix~\ref{app:half-disk}.
	\end{proof}
	
	The next theorem demonstrates the uniqueness of hot spots of the torsion function in any sectorial domains.
	\begin{theorem}
		\label{thm:unique}
		For every \(0<\alpha<2\pi\), the torsion function
		\(u_\alpha\) is even in \(\theta\), and for each fixed \(0<r<1\),
		the function \(\theta\mapsto u_\alpha(r,\theta)\) is strictly decreasing
		on \([0,\alpha/2)\).  Moreover, the axial function
		\(T_\alpha(r)=u_\alpha(r,0)\) has a unique critical point
		\(x_\alpha\in(0,1)\), and
		\[
		T_\alpha'(r)>0\quad(0<r<x_\alpha),
		\qquad
		T_\alpha'(r)<0\quad(x_\alpha<r<1),
		\]
		with \(T_\alpha''(x_\alpha)<0\).  Consequently,
		\((x_\alpha,0)\) is the unique, nondegenerate maximum point of \(u_\alpha\) in
		\(S(\alpha)\).
	\end{theorem}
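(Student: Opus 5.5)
The plan is to prove the angular monotonicity first, by a maximum principle for \(\partial_\theta u_\alpha\). Then I will obtain uniqueness on the axis from an explicit positive formula for \(H_\alpha(r)=2T_\alpha(r)-rT_\alpha'(r)\).

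\textbf{Angular monotonicity.} Evenness in \(\theta\) follows from uniqueness of the Dirichlet problem and the reflection symmetry of \(S(\alpha)\). Put \(w=\partial_\theta u_\alpha\). Since \(\partial_\theta\) generates rotations, it commutes with \(\Delta\), so \(w\) is harmonic in the half-sector \(S^+=\{0<r<1,\ 0<\theta<\alpha/2\}\).

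The boundary values of \(w\) are as follows:
\begin{itemize}
\item \(w=0\) on \(\theta=0\), by evenness;
\item \(w=0\) on the arc \(r=1\), because \(u_\alpha\) vanishes identically there and \(w\) is a tangential derivative;
\item \(w\le0\) on \(\theta=\alpha/2\), because \(u_\alpha>0\) inside and \(u_\alpha=0\) on that edge.
\end{itemize}
At the vertex, the series \eqref{eq:torsion-series} gives \(w=O(r^{\min(2,\nu)})\) for \(\nu\neq2\), with at most an extra \(\log(1/r)\) factor when \(\nu=2\); in all cases \(w\) is bounded, even in the nonconvex range \(\alpha>\pi\). The corners of \(S^+\) therefore form a finite set of removable boundary points for bounded harmonic functions. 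The maximum principle gives \(w\le0\) in \(S^+\), and the strong maximum principle gives \(w<0\), since \(w\not\equiv0\) (\(u_\alpha\) is not radial). Moreover, \(w\) is negative in \(S^+\) and vanishes on the flat edge \(\theta=0\), so Hopf's lemma gives \(\partial_\theta w(r,0)<0\). That is, \(\partial_\theta^2u_\alpha(r,0)<0\) for \(0<r<1\).

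\textbf{Axial uniqueness.} I would differentiate the series of Corollary~\ref{cor:axis-equation} termwise on compact subsets of \((0,1)\). For \(\mu\ne2\),
\[
2q_\mu-rq_\mu'=\frac{(\mu-2)r^\mu}{\mu^2-4}=\frac{r^\mu}{\mu+2},
\]
and the same identity holds for \(\mu=2\) by direct check on \(q_2\). Using \(1/(\mu+2)=\int_0^1 s^{\mu+1}\,ds\) and the series \(\sum_k\frac{(-1)^k}{2k+1}z^{2k+1}=\arctan z\), this gives
\[
H_\alpha(r)=\frac4\pi\int_0^1 s\arctan\bigl((rs)^{\nu}\bigr)\,ds .
\]
This is positive and strictly increasing in \(r\). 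The interchange of sum and integral is justified by Abel/dominated convergence, since \(rs<1\).

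Now \(H_\alpha'=T_\alpha'-rT_\alpha''\). At any critical point \(r_0\) of \(T_\alpha\) this gives \(T_\alpha''(r_0)=-H_\alpha'(r_0)/r_0<0\). So every critical point of \(T_\alpha\) in \((0,1)\) is a strict nondegenerate local maximum. Two such maxima would enclose an interior minimum, which is also a critical point with \(T_\alpha''\ge0\); this is a contradiction, so there is at most one critical point. Existence holds because \(T_\alpha>0\) in \((0,1)\) and \(T_\alpha\to0\) at both ends. The sign pattern of \(T_\alpha'\) then follows, since \(x_\alpha\) is the only zero of \(T_\alpha'\).

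\textbf{Global conclusion.} A maximum of \(u_\alpha\) is interior and critical. Since \(\partial_\theta u_\alpha\ne0\) off the axis, it lies on the axis, hence at \((x_\alpha,0)\). At this point the Hessian in polar coordinates is diagonal, with entries \(T_\alpha''(x_\alpha)<0\) and \(\partial_\theta^2u_\alpha<0\). Hence the maximum is nondegenerate.

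The main obstacle is the justification near the vertex for \(\alpha>\pi\), where \(\nabla u_\alpha\) may blow up. The plan handles this by working with \(w\), which carries an extra factor of \(r\), and with \(H_\alpha\), whose closed formula removes any delicacy of termwise summation.
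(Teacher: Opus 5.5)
Your proof is correct and follows essentially the same route as the paper. The shared steps are a maximum principle for \(\partial_\theta u_\alpha\) in the half-sector, with vertex control from the series; the identity \(2q_\mu-rq_\mu'=r^\mu/(\mu+2)\), which gives the same integral formula for \(H_\alpha\) (your \(\frac4\pi\int_0^1 s\arctan((rs)^\nu)\,ds\) is the paper's formula after rescaling the integration variable by \(r\)); the relation \(H_\alpha'=T_\alpha'-rT_\alpha''\) to control critical points; and Hopf's lemma on the axis for nondegeneracy.

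The differences are minor. You use the weak inequality on the slanted edge plus the strong maximum principle instead of Hopf there, and you count critical points via \(T_\alpha''<0\) at each one rather than via monotonicity of \(T_\alpha'/r\). One small fix: you need \(H_\alpha'(r_0)>0\) pointwise, not just strict monotonicity of \(H_\alpha\). This follows at once by differentiating your integral under the integral sign.
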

	
	\begin{proof}
		Symmetry and uniqueness give \(u_\alpha(r,-\theta)=u_\alpha(r,\theta)\).
		Set \(v=-\partial_\theta u_\alpha\) in the upper half-sector
		\(0<r<1\), \(0<\theta<\alpha/2\). Since angular differentiation
		commutes with the Laplacian, \(\Delta v=0\). Symmetry gives
		\(v=0\) on \(\theta=0\), and the zero boundary value on the circular
		arc gives \(v=0\) on \(r=1\). On the straight side
		\(\theta=\alpha/2\), Hopf's lemma gives
		\[
		\frac1r\partial_\theta u_\alpha
		=\partial_nu_\alpha<0,
		\qquad\text{hence }v>0\quad(0<r<1).
		\]
		The terms of the angular derivative series are bounded by
		$C_\alpha(k+1)^{-2}$, so the series converges uniformly on the closed sector. Thus \(v\) extends continuously at the vertex and the endpoints of the arc with value zero. The maximum
		principle and its strong form now yield \(v>0\) throughout the upper
		half-sector. Consequently, \(u_\alpha(r,\theta)\) is strictly decreasing
		in \(\theta\in[0,\alpha/2)\), and every maximum lies on the axis.
		
		Write \(T_\alpha(r)=u_\alpha(r,0)\). The series for \(T_\alpha'\) does not directly reveal its sign or the number of its zeros. The operator \(2-r\partial_r\) removes the
		\(r^2\) part of each radial term. We therefore define
		\[
		H_\alpha(r)=2T_\alpha(r)-rT_\alpha'(r).
		\]
		The identity
		\[
		2q_\mu(r)-rq_\mu'(r)=\frac{r^\mu}{\mu+2}
		\]
		also holds at \(\mu=2\). Applying it termwise to
		\eqref{eq:torsion-series}, we obtain, for \(0<r<1\),
		\begin{align}
			H_\alpha(r)
			&=\frac4\pi
			\sum_{k=0}^{\infty}
			\frac{(-1)^k}{2k+1}
			\frac{r^{(2k+1)\nu}}{(2k+1)\nu+2}\notag\\
			&=\frac4\pi\int_0^\infty
			e^{-2y}\arctan\bigl((re^{-y})^\nu\bigr)\,dy
			\label{eq:H-alpha-integral}\\
			&=\frac{4}{\pi r^2}
			\int_0^r s\arctan(s^\nu)\,ds.\notag
		\end{align}
		The interchange of summation and integration is justified
		by absolute convergence for \(r<1\). Differentiating the
		integral representation gives
		\[
		H_\alpha'(r)
		=\frac{4\nu}{\pi r}
		\int_0^\infty e^{-2y}
		\frac{(re^{-y})^\nu}
		{1+(re^{-y})^{2\nu}}\,dy>0.
		\]
		By the definition of \(H_\alpha\),
		\[
		\left(\frac{T_\alpha'(r)}r\right)'
		=-\frac{H_\alpha'(r)}{r^2}<0.
		\]
		Thus \(T_\alpha'\) has at most one zero in \((0,1)\).
		Since \(T_\alpha>0\) in \((0,1)\) and
		\(T_\alpha(0)=T_\alpha(1)=0\), it attains its maximum
		at an interior point. Its derivative vanishes there, so
		this point is the unique zero \(x_\alpha\) of
		\(T_\alpha'\).
		
		Because \(\frac{T_\alpha'(r)}{r}\) is strictly decreasing and
		vanishes at \(x_\alpha\),
		\[
		T_\alpha'(r)>0\quad(0<r<x_\alpha),
		\qquad
		T_\alpha'(r)<0\quad(x_\alpha<r<1).
		\]
		Moreover,
		\[
		H_\alpha'(x_\alpha)
		=-x_\alpha T_\alpha''(x_\alpha)>0,
		\]
		and therefore \(T_\alpha''(x_\alpha)<0\).

		Finally, \(v=-\partial_\theta u_\alpha\) is positive and harmonic
		in the upper half-sector and vanishes on the axis. Hopf's lemma
		on \(\theta=0\) gives
		\[
		\partial_\theta v(x_\alpha,0)>0,
		\qquad
		\partial_{\theta\theta}u_\alpha(x_\alpha,0)<0.
		\]
		Symmetry also gives
		\(\partial_{r\theta}u_\alpha(x_\alpha,0)=0\).
		At this critical point, the Hessian in the orthonormal polar frame is
		\[
		D^2u_\alpha(x_\alpha,0)=
		\begin{pmatrix}
			T_\alpha''(x_\alpha)&0\\
			0&x_\alpha^{-2}\partial_{\theta\theta}u_\alpha(x_\alpha,0)
		\end{pmatrix}.
		\]
		Both diagonal entries are negative, so the maximum is nondegenerate.
	\end{proof}
	
	\section{Monotonicity of the sectorial hot spots}
	\label{sec:sector-monotonicity}
	
	\subsection{Strict monotonicity of the torsion hot spot}
	
	We begin with the torsion hot spot, and show that it moves strictly toward the vertex as the sector opens.
	
	\begin{theorem}
		\label{thm:x-monotone}
		The function \(x_\alpha\) is strictly decreasing on
		\((0,2\pi)\).
	\end{theorem}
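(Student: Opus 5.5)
The plan is to recast the critical-point equation \(T_\alpha'(r)=0\) entirely in terms of the positive function \(H_\alpha\) from the proof of Theorem~\ref{thm:unique}, and then show that the resulting equation depends monotonically on \(\nu=\pi/\alpha\). From \(H_\alpha=2T_\alpha-rT_\alpha'\) we get \((T_\alpha/r^2)'=-H_\alpha/r^3\). Together with \(T_\alpha(1)=0\) this gives
\[
T_\alpha(r)=r^2\int_r^1\frac{H_\alpha(s)}{s^3}\,ds,
\qquad
rT_\alpha'(r)=2r^2\int_r^1\frac{H_\alpha(s)}{s^3}\,ds-H_\alpha(r).
\]
So \(\operatorname{sign}T_\alpha'(r)=\operatorname{sign}\bigl(G_\nu(r)-1\bigr)\), where
\[
G_\nu(r)=2r^2\int_r^1\frac{H_\alpha(s)}{H_\alpha(r)}\,\frac{ds}{s^3}.
\]
By Theorem~\ref{thm:unique}, \(G_\nu>1\) on \((0,x_\alpha)\), \(G_\nu<1\) on \((x_\alpha,1)\), and \(G_\nu(x_\alpha)=1\).

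Suppose we prove that for each fixed \(0<r<1\) the map \(\nu\mapsto G_\nu(r)\) is strictly increasing. Then the theorem follows. Take \(\alpha<\alpha'\), so \(\nu>\nu'\). Then \(G_\nu(x_{\alpha'})>G_{\nu'}(x_{\alpha'})=1\), which forces \(x_{\alpha'}<x_\alpha\). It suffices to show that for \(r<s<1\) the ratio \(H_\alpha(s)/H_\alpha(r)\) is strictly increasing in \(\nu\). Equivalently, the elasticity \(s\,\partial_s\log H_\alpha(s)\) should be strictly increasing in \(\nu\) for each fixed \(s\), since integrating this in \(\log s\) from \(r\) to \(s\) gives the ratio.

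For the elasticity I will use the last form of \eqref{eq:H-alpha-integral}, \(r^2H_\alpha(r)=\frac4\pi\int_0^r s\arctan(s^\nu)\,ds\). It gives
\[
r\frac{H_\alpha'(r)}{H_\alpha(r)}+2=\frac{r^2\arctan(r^\nu)}{\int_0^r s\arctan(s^\nu)\,ds}
=\left(\int_0^1 \tau\,\frac{\arctan((\tau r)^\nu)}{\arctan(r^\nu)}\,d\tau\right)^{-1}.
\]
So it is enough that for \(0<s<r<1\) the quotient \(\arctan(s^\nu)/\arctan(r^\nu)\) is strictly decreasing in \(\nu\). Write \(s=e^a\) and \(r=e^b\) with \(a<b<0\), and set \(\Phi(t)=\log\arctan(e^t)\) and \(\psi=\Phi'\). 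The \(\nu\)-derivative of the log of the quotient is then \(a\psi(\nu a)-b\psi(\nu b)\). We need this to be negative, i.e. \(|a|\psi(\nu a)>|b|\psi(\nu b)\). Since \(\psi>0\) and \(|a|>|b|\), this holds as soon as \(\psi\) is nonincreasing on \((-\infty,0)\), that is, as soon as \(t\mapsto\arctan(e^t)\) is log-concave there.

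This log-concavity is the main obstacle, and I would check it directly. Set \(w=e^t\). Then \(\psi=\dfrac{w}{(1+w^2)\arctan w}\), and \(\psi'\le0\) reduces to the elementary inequality
\[
(1-w^2)\arctan w\le w
\qquad\text{for } 0<w<1.
\]
I expect this to hold because \(\arctan w\le w\), but note that this alone only gives the weaker bound \(w-w^3\le w\). The clean route is to differentiate \(k(w)=w-(1-w^2)\arctan w\). One finds \(k(0)=0\) and \(k'(w)=2w\arctan w+\dfrac{2w^2}{1+w^2}>0\), so \(k>0\).

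Strictness of every inequality then gives strict monotonicity of \(G_\nu(r)\) in \(\nu\). The case \(\mu_k=2\) needs no special treatment, because only the integral representation is used. Continuity of \(x_\alpha\), which the paper also needs, follows since \(G_\nu(r)\) is jointly continuous and, by Theorem~\ref{thm:unique}, strictly crosses \(1\) at the single point \(x_\alpha\).
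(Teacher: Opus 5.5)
Your proof is correct and is essentially the paper's argument. Both write \(T_\alpha'\) through \(H_\alpha\) and reduce the theorem to strict decrease of \(H_{\alpha_2}/H_{\alpha_1}\), which is your increasing elasticity in \(\nu\); both then deduce this from the same cross inequality \(\arctan(s^{\nu_2})\arctan(r^{\nu_1})>\arctan(s^{\nu_1})\arctan(r^{\nu_2})\) for \(s<r\), \(\nu_2<\nu_1\), which rests on \((1-w^2)\arctan w<w\); you differentiate in \(\nu\) where the paper differentiates in \(r\). (Incidentally, the route you set aside already works: \(\arctan w\le w\) gives \((1-w^2)\arctan w\le w-w^3<w\), which is exactly what is needed.)
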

	
	\begin{proof}
		Fix \(0<\alpha_1<\alpha_2<2\pi\), and write
		\[
		\nu_i=\frac{\pi}{\alpha_i},
		\qquad
		H_i=H_{\alpha_i},
		\qquad
		x_i=x_{\alpha_i},
		\qquad i=1,2.
		\]
		Since \(T_\alpha(1)=0\), the definition of \(H_\alpha\) yields
		\begin{equation}\label{eq:T-from-H}
			T_\alpha(r)
			=r^2\int_r^1\frac{H_\alpha(s)}{s^3}\,ds.
		\end{equation}
		Differentiating gives
		\begin{equation}\label{eq:T-prime-from-H}
			\frac{T_\alpha'(r)}{r}
			=
			2\int_r^1\frac{H_\alpha(s)}{s^3}\,ds
			-\frac{H_\alpha(r)}{r^2}.
		\end{equation}
		
		At \(r=x_1\), the identity \(T_{\alpha_1}'(x_1)=0\) gives
		\[
		\frac{T_{\alpha_2}'(x_1)}{x_1}
		=
		2\int_{x_1}^1\frac{H_1(s)}{s^3}
		\left(
		\frac{H_2(s)}{H_1(s)}
		-\frac{H_2(x_1)}{H_1(x_1)}
		\right)ds.
		\]
		It therefore remains to show that $H_2/H_1$ is strictly decreasing.
		For $0<a<b$,
		\begin{equation}\label{eq:arctan-ratio}
			r\longmapsto\frac{\arctan(r^a)}{\arctan(r^b)}
			\quad\text{is strictly decreasing on }(0,1).
		\end{equation}
		Indeed,
		\[
		\frac{d}{dz}\frac{z}{(1+z^2)\arctan z}
		=\frac{(1-z^2)\arctan z-z}{(1+z^2)^2(\arctan z)^2}<0,
		\qquad 0<z<1.
		\]
		Since $r^b<r^a$ and $a<b$, this gives
		\[
		\frac{d}{dr}\log\frac{\arctan(r^a)}{\arctan(r^b)}
		=\frac1r\left[
		\frac{ar^a}{(1+r^{2a})\arctan(r^a)}
		-\frac{br^b}{(1+r^{2b})\arctan(r^b)}\right]<0.
		\]
		By \eqref{eq:H-alpha-integral} and $\nu_2<\nu_1$,
		\[
		\frac{H_2(r)}{H_1(r)}
		=\frac{\displaystyle\int_0^r s\arctan(s^{\nu_1})
			\frac{\arctan(s^{\nu_2})}{\arctan(s^{\nu_1})}\,ds}
		{\displaystyle\int_0^r s\arctan(s^{\nu_1})\,ds}
		>\frac{\arctan(r^{\nu_2})}{\arctan(r^{\nu_1})}.
		\]
		Consequently,
		\[
		\left(\frac{H_2}{H_1}\right)'(r)
		=\frac{r\arctan(r^{\nu_1})}
		{\displaystyle\int_0^r s\arctan(s^{\nu_1})\,ds}
		\left[\frac{\arctan(r^{\nu_2})}{\arctan(r^{\nu_1})}
		-\frac{H_2(r)}{H_1(r)}\right]<0.
		\]
		Thus $T'_{\alpha_2}(x_1)<0$. It follows that $x_2<x_1$.
	\end{proof}
	
	\subsection{Strict monotonicity of the ground-state hot spot}
	
	\begingroup
	We now turn to the ground-state hot spot in the sector.
	Let \(\nu=\pi/\alpha\). Up to a positive multiplicative constant,
	the first Dirichlet eigenfunction in \(S(\alpha)\) is
	\[
	\phi_\alpha(r,\theta)
	=J_\nu(j_{\nu,1}r)\cos(\nu\theta),
	\]
	where \(j_{\nu,1}\) is the first positive zero of \(J_\nu\).
	Since \(J_\nu(j_{\nu,1}r)>0\) for \(0<r<1\) and the angular
	factor has its unique maximum at \(\theta=0\), the hot spot
	lies on the symmetry axis. There, the radial critical points satisfy
	\[
	\frac{d}{dr}J_\nu(j_{\nu,1}r)
	=j_{\nu,1}J_\nu'(j_{\nu,1}r)=0.
	\]
	The interlacing $j'_{\nu,1}<j_{\nu,1}<j'_{\nu,2}$
	\cite[Ch.~XV]{watson} shows that the radial factor is strictly
	increasing before $\frac{j'_{\nu,1}}{j_{\nu,1}}$ and strictly decreasing
	afterwards. Therefore
	\begin{equation}\label{eq:y-alpha-bessel}
		y_\alpha=\frac{j'_{\nu,1}}{j_{\nu,1}},
		\qquad \nu=\frac{\pi}{\alpha},
	\end{equation}
	where \(j'_{\nu,1}\) is the first positive zero of \(J_\nu'\).
	\par\endgroup
	
	With the explicit Bessel formula for \(\phi_\alpha\), the monotonicity of the hot spot becomes a statement about how the ratio \(j'_{\nu,1}/j_{\nu,1}\) depends on the order.
	
	\begin{proposition}
		The function \(y_\alpha\) is strictly decreasing on \((0,2\pi)\).
	\end{proposition}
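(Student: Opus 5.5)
Since \(\nu=\pi/\alpha\) is strictly decreasing in \(\alpha\), it suffices to show that \(\nu\mapsto j'_{\nu,1}/j_{\nu,1}\) is strictly increasing on \(\nu>1/2\). We will use the Sturm--Liouville comparison announced in the introduction. For fixed \(\nu\), set \(w_\nu(r)=J_\nu(j_{\nu,1}r)\), normalized positive on \((0,1)\). It satisfies
\[
(rw_\nu')'+\Bigl(\lambda_\nu r-\frac{\nu^2}{r}\Bigr)w_\nu=0,\qquad \lambda_\nu=j_{\nu,1}^2,\qquad w_\nu(1)=0,
\]
and \(w_\nu\sim c\,r^\nu\) at \(0\). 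By the discussion preceding \eqref{eq:y-alpha-bessel}, \(y(\nu):=y_\alpha\) is the unique zero of \(w_\nu'\) in \((0,1)\). Fix \(\nu_1<\nu_2\), write \(w_i=w_{\nu_i}\), \(\lambda_i=\lambda_{\nu_i}\) and \(y_i=y(\nu_i)\). We want to prove \(y_1<y_2\), which is equivalent to \(w_2'(y_1)>0\).

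We will use the Riccati/Prüfer quantity \(\rho_i=rw_i'/w_i\) on \((0,1)\). It satisfies
\[
r\rho_i'=\nu_i^2-\lambda_i r^2-\rho_i^2,
\qquad \rho_i(0^+)=\nu_i,\qquad \rho_i(1^-)=-\infty .
\]
Its zero is exactly \(y_i\). A direct Sturm comparison is inconclusive, because both \(\nu^2\) and \(\lambda_\nu\) increase with \(\nu\): the term \(\nu^2-\lambda r^2\) favors \(\rho_2\) near the vertex and \(\rho_1\) near \(r=1\). We therefore rescale radially. Set \(t=r/y_i\), so that the critical point sits at \(t=1\), or alternatively use the variable \(s=j_{\nu,1}r\). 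In the latter variable the claim becomes
\[
\frac{j'_{\nu,1}}{j_{\nu,1}} \ \text{increasing in } \nu .
\]

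The cleanest route is to differentiate in \(\nu\) with Hadamard/Watson-type formulas. Watson gives the classical representation
\[
\frac{d j_{\nu,1}}{d\nu}=\frac{2\nu}{j_{\nu,1}J_{\nu+1}(j_{\nu,1})^2}\int_0^{j_{\nu,1}}\frac{J_\nu(s)^2}{s}\,ds,
\]
and there is an analogous formula for \(dj'_{\nu,1}/d\nu\) obtained from \(J_\nu\) at \(j'_{\nu,1}\). The inequality \((\log j'_{\nu,1})'>(\log j_{\nu,1})'\) then reduces to comparing two weighted integrals of \(J_\nu^2/s\). As a fallback, I would argue variationally. The value \(y_\nu\) is characterized through \(\rho_\nu(y_\nu)=0\), and integrating \(r\rho'=\nu^2-\lambda r^2-\rho^2\) against the weight \(w_\nu^2/r\) on \((0,y_\nu)\) and on \((y_\nu,1)\) gives two energy identities:
\[
\int_0^{y}\Bigl(w'^2+\frac{\nu^2}{r^2}w^2-\lambda w^2\Bigr)r\,dr=0,
\]
coming from \(w'(y)=0\), together with its companion on \([0,1]\). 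Perturbing \(\nu\) in these, the \(\nu\)-derivative of \(y\) is expressed by a Hadamard formula whose sign is that of
\[
\frac{\displaystyle\int_0^{y}\frac{w^2}{r}\,dr}{\displaystyle\int_0^{y}w^2r\,dr}-\frac{\displaystyle\int_0^{1}\frac{w^2}{r}\,dr}{\displaystyle\int_0^{1}w^2r\,dr}.
\]
This difference is positive because \(r^{-2}\) is decreasing, so the average of \(r^{-2}\) over a shorter initial interval with the same weight profile is larger.

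The main obstacle is making the Hadamard-type derivative of the \emph{Neumann point} \(y\) rigorous, since \(y\) is the endpoint of a mixed problem whose eigenvalue is forced to equal \(\lambda_\nu\). Concretely, \(\lambda_\nu\) is the first eigenvalue both of the Dirichlet problem on \((0,1)\) and of the Neumann-at-\(y\) problem on \((0,y)\). Monotonicity of these two eigenvalue branches in the endpoint and in \(\nu\) (via the implicit function theorem and the Rayleigh quotients) gives
\[
\partial_\nu \lambda^{\mathrm N}(y,\nu)=2\nu\frac{\int_0^y w^2/r}{\int_0^y w^2 r},
\qquad \partial_\nu \lambda^{\mathrm D}=2\nu\frac{\int_0^1 w^2/r}{\int_0^1 w^2r},
\]
with \(\partial_y\lambda^{\mathrm N}<0\). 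Hence \(dy/d\nu\) has the sign of the displayed difference, which is positive. To get strictness for every \(\nu>1/2\), I would check the averaging inequality using that \(r^{-2}\) is strictly decreasing. Continuity of \(y_\alpha\) follows from the smooth dependence of \(j_{\nu,1}\) and \(j'_{\nu,1}\) on \(\nu\).
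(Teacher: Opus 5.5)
Your argument is correct, but it takes a different route from the paper.

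\textbf{The paper's route.} The paper never differentiates in $\nu$. For fixed $\nu_1<\nu_2$ it uses the cross-Wronskian $\mathcal W=r(u_1u_2'-u_2u_1')$. Its derivative $(\nu_2^2-\nu_1^2)(1-cr^2)u_1u_2/r$ changes sign once, so $\mathcal W>0$ between its vanishing endpoint values. This gives $u_2'/u_2>u_1'/u_1$ on all of $(0,1)$, and evaluating at $y_1$ yields $y_1<y_2$.

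\textbf{Your route.} You characterize $y(\nu)$ by $\lambda^{\mathrm N}(y,\nu)=\lambda^{\mathrm D}(\nu)$ and apply Hellmann--Feynman to both branches with the same $w$. This is legitimate, since $w|_{(0,y)}$ is positive and hence the first Neumann eigenfunction. You then reduce the sign of $dy/d\nu$ to an averaging inequality for the strictly decreasing weight $r^{-2}$ under $rw^2\,dr$. That inequality holds: the $(0,1)$-average is a convex combination of the $(0,y)$-average, which exceeds $y^{-2}$, and a $(y,1)$-average, which is below $y^{-2}$.

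\textbf{Closing the step you flag.} The step you call the main obstacle closes in a few lines without eigenvalue branches. With $\dot w=\partial_\nu[J_\nu(j_{\nu,1}r)]$ one has
\[
\bigl(r(w\dot w'-\dot w w')\bigr)'=\Bigl(\frac{2\nu}{r}-r\,\frac{d\,j_{\nu,1}^2}{d\nu}\Bigr)w^2 .
\]
Integrating over $(0,1)$, where both $w$ and $\dot w$ vanish at $r=1$, gives your formula for $\partial_\nu\lambda^{\mathrm D}$; this is equivalent to Watson's formula. Integrating over $(0,y)$ gives $y\,w(y)\,\dot w'(y)=2\nu\int_0^y rw^2\,dr$ times your displayed difference. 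Finally, $y'(\nu)=-\dot w'(y)/w''(y)$, and $w''(y)<0$ because $j'_{\nu,1}>\nu$.

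\textbf{Comparison.} This identity is the infinitesimal version of the paper's $\mathcal W$, so the two proofs are close relatives. The paper's finite comparison needs no smoothness in $\nu$ and gives a global ordering of logarithmic derivatives. Yours gives an explicit, manifestly positive formula for $dy/d\nu$. That is the kind of parameter derivative the paper's later remark on certified bounds for $d'(\alpha)$ calls for. Your Riccati, rescaling and ``cleanest route'' paragraphs are not needed; the fallback argument is the proof.
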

	
	\begin{proof}
		Fix $1/2<\nu_1<\nu_2$, and set
		\[
		\lambda_i=j_{\nu_i,1}^2,
		\qquad u_i(r)=J_{\nu_i}(j_{\nu_i,1}r),
		\qquad i=1,2.
		\]
		Then \(u_i>0\) on \((0,1)\), \(u_i(1)=0\), and
		\[
		(ru_i')'+\left(\lambda_i r-\frac{\nu_i^2}{r}\right)u_i=0.
		\]
		For
		\[
		\mathcal W(r)=r(u_1u_2'-u_2u_1'),
		\]
		these equations give
		\[
		\mathcal W'(r)
		=(\nu_2^2-\nu_1^2)\frac{1-cr^2}{r}u_1(r)u_2(r),
		\qquad c=\frac{\lambda_2-\lambda_1}{\nu_2^2-\nu_1^2},
		\]
		The asymptotics \(u_i=O(r^{\nu_i})\) and
		\(u_i'=O(r^{\nu_i-1})\) imply
		\[
		\lim_{r\downarrow0}\mathcal W(r)=0,
		\qquad \mathcal W(1)=0.
		\]
		Since the first eigenfunction is $J_\nu(j_{\nu,1}r)\cos(\nu\theta)$,
		the Rayleigh minimum is attained within the class
		$v(r)\cos(\nu\theta)$. Integrating in $\theta$ gives
		\[
		j_{\nu,1}^2=
		\inf_{v\ne0}
		\frac{\displaystyle\int_0^1
			\left(r|v'|^2+\frac{\nu^2}{r}|v|^2\right)\,dr}
		{\displaystyle\int_0^1r|v|^2\,dr},
		\]
		where \(v\) is locally absolutely continuous, \(v(1)=0\),
		and the numerator is finite. 
		The behavior $u_2=O(r^{\nu_2})$ and $u_2'=O(r^{\nu_2-1})$ at zero makes $u_2$ admissible for the $\nu_1$-problem.
		Taking \(u_2\) as a test
		function for the \(\nu_1\)-problem yields
		\[
		\begin{aligned}
			\lambda_2
			&\ge\lambda_1+(\nu_2^2-\nu_1^2)
			\frac{\displaystyle\int_0^1\frac{u_2(r)^2}{r}\,dr}
			{\displaystyle\int_0^1r u_2(r)^2\,dr}\\
			&>\lambda_1+\nu_2^2-\nu_1^2.
		\end{aligned}
		\]
		Hence \(c>1\). Since \(\mathcal W'\) changes sign once,
		from positive to negative, the endpoint conditions give
		\(\mathcal W>0\) on \((0,1)\). Consequently,
		\[
		\frac{\mathcal W(r)}{r u_1(r)u_2(r)}=\frac{u_2'(r)}{u_2(r)}-\frac{u_1'(r)}{u_1(r)}>0.
		\]
		Let
		\[
		y_i=\frac{j'_{\nu_i,1}}{j_{\nu_i,1}},
		\qquad i=1,2.
		\]
		At \(r=y_1\), this inequality gives \(u_2'(y_1)>0\),
		so \(y_1<y_2\). Thus
		$
		\nu\longmapsto\frac{j'_{\nu,1}}{j_{\nu,1}}
		$
		is strictly increasing. Since \(\nu=\pi/\alpha\),
		the assertion follows.
	\end{proof}
	
	\medskip
	
	Although both hot spots move toward the vertex as the opening angle
	increases, their relative order may change. We compare their
	small-angle asymptotics and half-disk values, and determine their
	limits as \(\alpha\uparrow2\pi\).
	
	\section{Comparison of the two hot spots over some special sectors}
	\label{sec:sector-comparison}
	
	\subsection{Continuity in the opening angle}
	
	The integral formulas \eqref{eq:H-alpha-integral} and
	\eqref{eq:T-prime-from-H} give joint continuity of
	\(T_\alpha'(r)\) for \(0<\alpha<2\pi\), \(0<r<1\).
	The strict sign change at its unique zero implies continuity of
	\(x_\alpha\): for \(r_-<x_{\alpha_0}<r_+\), the signs
	\(T_{\alpha_0}'(r_-)>0>T_{\alpha_0}'(r_+)\) persist for
	\(\alpha\) near \(\alpha_0\), trapping \(x_\alpha\) between
	\(r_-\) and \(r_+\).
	Continuity of \(y_\alpha\) follows from smooth parameter dependence
	and simplicity of the first Bessel zeros \cite[\S10.21(i)]{dlmf}.
	In particular, \(x_\alpha-y_\alpha\) is continuous on \((0,2\pi)\).
	
	\subsection{Small-angle asymptotics}
	
	Set
	\[
	\beta=\frac{2\alpha}{\pi},
	\qquad
	\nu=\frac{\pi}{\alpha}=\frac{2}{\beta}.
	\]
	
	When the sector becomes sharp, both hot spots are pushed toward the arc, but at different rates; the comparison of the two rates decides which of them lies farther out.
	
	\begin{proposition}
		\label{prop:small-angle-sector}
		As \(\beta\downarrow0\),
		\begin{equation}
			1-x_\alpha=
			\frac{\beta}{2}\log\frac1\beta
			-\frac{\beta}{2}\log\frac{\pi^3}{32}
			+O\!\left(\beta^2\log^2\frac1\beta\right),
			\label{eq:torsion-small-angle}
		\end{equation}
		and
		\begin{equation}
			1-y_\alpha=
			\frac{|a_1|-|a'_1|}{2}\beta^{2/3}
			+O(\beta^{4/3}),
			\label{eq:eigen-small-angle}
		\end{equation}
		where \(a_1<0\) and \(a'_1<0\) are the first zeros of
		\(\operatorname{Ai}\) and \(\operatorname{Ai}'\), respectively.
		Consequently,
		\begin{equation}
			x_\alpha-y_\alpha=
			\frac{|a_1|-|a'_1|}{2}\beta^{2/3}
			-\frac{\beta}{2}\log\frac1\beta
			+O(\beta),
			\label{eq:sector-difference-small-angle}
		\end{equation}
		and \(x_\alpha>y_\alpha\) for all sufficiently small
		\(\alpha>0\).
	\end{proposition}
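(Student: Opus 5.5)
The plan is to treat the two hot spots separately: the torsion point through the axial critical-point equation \eqref{eq:x-alpha-equation}, and the ground-state point through the Bessel formula \eqref{eq:y-alpha-bessel} together with the classical large-order (Airy-type) expansions of Bessel zeros. Subtracting the two expansions then gives \eqref{eq:sector-difference-small-angle}. The sign claim follows because \(\beta^{2/3}\gg\beta\log(1/\beta)\) and \(|a_1|>|a_1'|\).

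\emph{Torsion hot spot.} By Theorem~\ref{thm:unique}, \(x_\alpha\) is the unique root of \eqref{eq:x-alpha-equation}. For small \(\beta\) we have \(\nu=2/\beta\) large, so every \(\mu_k\ne2\) and no logarithmic term appears. I would split the left side of \eqref{eq:x-alpha-equation} into an \(r^2\)-part and an \(r^{\mu_k}\)-part:
\[
2r\sum_{k\ge0}\frac{4(-1)^k}{(2k+1)\pi(\mu_k^2-4)}
\;-\;\frac{4}{\pi}\sum_{k\ge0}\frac{(-1)^k\mu_k r^{\mu_k-1}}{(2k+1)(\mu_k^2-4)} .
\]
Expanding \(1/(\mu_k^2-4)=\mu_k^{-2}(1+O(\nu^{-2}))\) and using \(\sum_k(-1)^k(2k+1)^{-3}=\pi^3/32\), the first sum becomes \(\frac{2r}{\nu^2}\cdot\frac{\pi^2}{8}\,(1+O(\nu^{-2}))\). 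In the second sum, the \(k=0\) term is \(\frac4\pi\nu^{-1}r^{\nu-1}(1+O(\nu^{-2}))\). The terms with \(k\ge1\) are \(O(\nu^{-1}r^{3\nu})\), and these are negligible once we know \(r^\nu=O(\beta)\). Equating the two parts gives
\[
r^{\nu-2}=\frac{\pi^3}{16\,\nu}\bigl(1+O(\beta)\bigr),
\qquad\text{that is,}\qquad
r^\nu=\frac{\pi^3}{32}\,\beta\,\bigl(1+O(\beta\log\tfrac1\beta)\bigr),
\]
where the factor \(r^{-2}=1+O(1-r)\) accounts for the error in the second form.

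An a priori localisation is needed to justify discarding the \(k\ge1\) terms. For this I would use the sign change of \(T_\alpha'\) from Theorem~\ref{thm:unique}: at the two test points \(r_\pm\) with \(r_\pm^\nu=C_\pm\beta\), the sign of \(T_\alpha'(r_\pm)\) is read off from the displayed split. Taking logarithms, \(\nu\log x_\alpha=\log(\pi^3\beta/32)+O(\beta\log\frac1\beta)\). Since \(-\log r=(1-r)+O((1-r)^2)\) and \(1-r=O(\beta\log\frac1\beta)\), this yields \eqref{eq:torsion-small-angle} with the stated \(O(\beta^2\log^2\frac1\beta)\) remainder.

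\emph{Ground-state hot spot.} I would invoke the uniform large-\(\nu\) expansions of the first zeros \cite[\S10.21(viii)]{dlmf}:
\[
j_{\nu,1}=\nu+|a_1|\Bigl(\frac\nu2\Bigr)^{1/3}+O(\nu^{-1/3}),
\qquad
j'_{\nu,1}=\nu+|a'_1|\Bigl(\frac\nu2\Bigr)^{1/3}+O(\nu^{-1/3}).
\]
Dividing the two expansions gives
\[
\frac{j'_{\nu,1}}{j_{\nu,1}}=1-\bigl(|a_1|-|a_1'|\bigr)2^{-1/3}\nu^{-2/3}+O(\nu^{-4/3}).
\]
With \(\nu=2/\beta\) we have \(2^{-1/3}\nu^{-2/3}=\beta^{2/3}/2\) and \(\nu^{-4/3}\asymp\beta^{4/3}\), which is \eqref{eq:eigen-small-angle}. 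The difference formula then follows: the \(O(\beta)\) term absorbs both \(\frac\beta2\log\frac{\pi^3}{32}\) and the remainders.

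\emph{Main obstacle.} I expect the hardest step to be the rigorous error control in the torsion part. The tail \(\sum_{k\ge1}\) and the replacement of \(\mu_k^2-4\) by \(\mu_k^2\) must be bounded uniformly near the root, and the localisation argument must be set up carefully. The Bessel part only needs a correct citation of the second-order zero asymptotics, with remainder \(O(\nu^{-1/3})\).
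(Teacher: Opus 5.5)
Your proposal is correct and is essentially the paper's proof. It uses the same leading-order balance in the axial equation \eqref{eq:x-alpha-equation}: your $x_\alpha^{\nu-2}=\frac{\pi^3}{32}\beta\,(1+o(1))$ is the paper's $t^{1-\beta}$ with $t=x_\alpha^{\nu}$. It also uses the same DLMF large-order expansions of $j_{\nu,1}$ and $j'_{\nu,1}$ for the ground state.

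The only difference is the a priori localisation. The paper reads off $x_\alpha^\nu\le\frac32\beta$ directly from the alternating-series bounds $Q_\beta(t)\ge\frac89t^{1-\beta}$ and $C_\beta\le\frac43\beta$ in the rewritten equation $Q_\beta(t)=C_\beta$. You instead bracket $x_\alpha$ between test points using the sign change from Theorem~\ref{thm:unique}; both routes work.
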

	
	\begin{proof}
		We first consider the torsion hot spot. Put
		\[
		t=x_\alpha^{2/\beta},
		\qquad\text{so that}\qquad
		x_\alpha=t^{\beta/2}.
		\]
		For \(0<\beta<1\), substituting
		\(\mu_k=\frac{2(2k+1)}{\beta}\) into
		\eqref{eq:x-alpha-equation} gives
		\begin{equation}\label{eq:Q-eq}
			Q_\beta(t)=C_\beta,
		\end{equation}
		where
		\begin{equation}\label{eq:Qbeta}
			Q_\beta(t)
			=t^{-\beta}
			\sum_{k=0}^{\infty}
			\frac{(-1)^k t^{2k+1}}
			{(2k+1)^2-\beta^2},
		\end{equation}
		and
		\begin{equation}\label{eq:Cbeta}
			C_\beta
			=\beta\sum_{k=0}^{\infty}
			\frac{(-1)^k}
			{(2k+1)((2k+1)^2-\beta^2)}.
		\end{equation}
		As \(\beta\downarrow0\),
		\[
		C_\beta=\frac{\pi^3}{32}\beta+O(\beta^3),
		\qquad
		\sum_{k=0}^{\infty}\frac{(-1)^k}{(2k+1)^3}
		=\frac{\pi^3}{32}.
		\]
		For \(0<\beta\le1/2\), the alternating-series bounds give
		\[
		Q_\beta(t)\ge\frac89t^{1-\beta},
		\qquad C_\beta\le\frac43\beta.
		\]
		Hence \(t\le t^{1-\beta}\le\frac{3\beta}{2}\). Expanding
		\eqref{eq:Q-eq} with \(t=O(\beta)\), we obtain
		\[
		t^{1-\beta}=\frac{\pi^3}{32}\beta\bigl(1+O(\beta^2)\bigr),
		\]
		and therefore
		\[
		\log x_\alpha
		=\frac\beta{2(1-\beta)}
		\left(\log\beta+\log\frac{\pi^3}{32}+O(\beta^2)\right).
		\]
		Exponentiation gives \eqref{eq:torsion-small-angle}.
		
		For the ground state, the large-order expansions
		\cite[Eqs.~(10.21.32), (10.21.36)]{dlmf} give
		\[
		\begin{aligned}
			j_{\nu,1}&=\nu+2^{-1/3}|a_1|\nu^{1/3}+O(\nu^{-1/3}),\\
			j'_{\nu,1}&=\nu+2^{-1/3}|a'_1|\nu^{1/3}+O(\nu^{-1/3}).
		\end{aligned}
		\]
		Together with \eqref{eq:y-alpha-bessel}, these formulas imply
		\[
		y_\alpha
		=1-2^{-1/3}(|a_1|-|a'_1|)\nu^{-2/3}
		+O(\nu^{-4/3}).
		\]
		Substituting \(\nu=2/\beta\) proves
		\eqref{eq:eigen-small-angle}. Finally,
		\eqref{eq:sector-difference-small-angle} follows by subtraction.
		Since \(|a_1|>|a'_1|\) and
		\[
		\beta^{2/3}\gg\beta\log\frac1\beta
		\qquad(\beta\downarrow0),
		\]
		its leading term is positive. Thus \(x_\alpha>y_\alpha\) for
		all sufficiently small \(\alpha>0\).
	\end{proof}
	
	\subsection{The half-disk comparison}
	
	At \(\alpha=\pi\), Corollary~\ref{cor:half-disk} gives
	\[
	T_\pi'(r)=-r+\frac{1+r^{-2}}\pi
	+\frac{r-r^{-3}}\pi\arctan r.
	\]
	Its unique zero and the Bessel expression \eqref{eq:y-alpha-bessel}
	yield the numerical values
	\[
	x_\pi\approx0.4802197170,
	\qquad y_\pi=\frac{j'_{1,1}}{j_{1,1}}\approx0.4805128044.
	\]
	The strict inequality \(x_\pi<y_\pi\) is established in
	\cite[Proposition~2.1]{bensonlaugesen}.
	
	The difference has opposite signs at the two ends of the range of angles, so continuity forces a coincidence.
	
	\begin{corollary}
		There exists \(\alpha_0\in(0,\pi)\) such that
		\[
		x_{\alpha_0}=y_{\alpha_0}.
		\]
	\end{corollary}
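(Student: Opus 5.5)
The plan is an intermediate value argument applied to the function \(g(\alpha)=x_\alpha-y_\alpha\) on \((0,\pi]\).

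First, \(g\) is continuous on \((0,2\pi)\). This was established at the start of this section: the integral formulas \eqref{eq:H-alpha-integral} and \eqref{eq:T-prime-from-H} give joint continuity of \(T'_\alpha(r)\), and the strict sign change of \(T'_\alpha\) at its unique zero then traps \(x_\alpha\). On the ground-state side, \(y_\alpha=j'_{\nu,1}/j_{\nu,1}\) with \(\nu=\pi/\alpha\) depends continuously on \(\alpha\), because the first Bessel zeros are simple and depend smoothly on the order.

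Next, I fix the sign of \(g\) at the two ends of the interval. Proposition~\ref{prop:small-angle-sector} gives \(x_\alpha>y_\alpha\) for all sufficiently small \(\alpha>0\). The reason is the leading term \(\frac{|a_1|-|a_1'|}{2}\beta^{2/3}\) in \eqref{eq:sector-difference-small-angle}: it is positive since \(|a_1|>|a'_1|\), and it dominates \(\frac{\beta}{2}\log\frac1\beta\). So I can choose some \(\alpha_1\in(0,\pi)\) with \(g(\alpha_1)>0\). At the other end, the half-disk comparison \cite[Proposition~2.1]{bensonlaugesen} gives \(x_\pi<y_\pi\), that is, \(g(\pi)<0\).

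Finally, \(g\) is continuous on \([\alpha_1,\pi]\) and changes sign there. The intermediate value theorem therefore yields \(\alpha_0\in(\alpha_1,\pi)\subset(0,\pi)\) with \(g(\alpha_0)=0\), i.e. \(x_{\alpha_0}=y_{\alpha_0}\).

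The only delicate point is that the strict inequality at \(\alpha=\pi\) must be genuinely rigorous and not just numerical. The computed values \(0.48022\) and \(0.48051\) differ only in the fourth decimal. For this step I rely on the cited result of Benson--Laugesen--Minion--Siudeja. If one wanted a self-contained check, I would evaluate the closed form of \(T'_\pi\) at \(r=y_\pi\), using rigorous enclosures of \(j_{1,1}\) and \(j'_{1,1}\), and verify \(T'_\pi(y_\pi)<0\). Since \(T'_\pi\) is negative exactly on \((x_\pi,1)\) by Theorem~\ref{thm:unique}, this is equivalent to \(x_\pi<y_\pi\).
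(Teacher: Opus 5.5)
Your proposal is correct and is essentially the paper's own argument: continuity of \(x_\alpha-y_\alpha\) on \((0,2\pi)\), positivity for small \(\alpha\) from Proposition~\ref{prop:small-angle-sector}, and negativity at \(\alpha=\pi\) from \cite[Proposition~2.1]{bensonlaugesen}, combined by the intermediate value theorem. Your explicit choice of \(\alpha_1\) and your observation that \(x_\pi<y_\pi\) is equivalent to \(T_\pi'(y_\pi)<0\) by Theorem~\ref{thm:unique} are accurate refinements of the same route.
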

	
	\begin{proof}
		By Proposition~\ref{prop:small-angle-sector}, \(x_\alpha-y_\alpha>0\)
		for all sufficiently small \(\alpha>0\), whereas \(x_\pi-y_\pi<0\).
		Continuity and the intermediate value theorem give the assertion.
	\end{proof}

	\subsection{The slit-disk limit as the angle approaches \(2\pi\)}
	
	The increasing union of the sectors is the slit disk
	\[
	\bigcup_{0<\alpha<2\pi}S(\alpha)
	=B_1(0)\setminus\{(x,0):-1<x\le0\}.
	\]
	\begingroup
	
	The limiting Dirichlet problem retains both faces of the slit.
	Strict monotonicity ensures that both coordinates have limits, and the two limits are
	characterised by scalar equations, one for each hot spot. Remark~\ref{rem:slit-arithmetic}
	presents arithmetic consequences of those equations.
	\par\endgroup
	
	\begin{proposition}
		\label{prop:slit-limits}
		Let \(z_*\) be the unique root of \(\tan z=2z\) in
		\((0,\pi/2)\), and let \(t_*\) be the unique root in \((0,1)\) of
		\[
		3(1-t^8)\arctan t=t(3t^6-t^4-t^2+3).
		\]
		Set \(x_*=t_*^2\) and \(y_*=\frac{z_*}{\pi}\). Then
		\[
		\lim_{\alpha\uparrow2\pi}x_\alpha=x_*,
		\qquad
		\lim_{\alpha\uparrow2\pi}y_\alpha=y_*,
		\]
		and
		\[
		\{x_\alpha:0<\alpha<2\pi\}=(x_*,1),
		\qquad
		\{y_\alpha:0<\alpha<2\pi\}=(y_*,1).
		\]
	\end{proposition}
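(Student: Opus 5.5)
Both coordinates are strictly decreasing and continuous in \(\alpha\) by Theorem~\ref{thm:x-monotone} and the ground-state monotonicity proposition. So both limits as \(\alpha\uparrow2\pi\) exist, and the ranges are \((\lim_{\alpha\uparrow2\pi},\lim_{\alpha\downarrow0})\). The small-angle asymptotics of Proposition~\ref{prop:small-angle-sector} give \(x_\alpha,y_\alpha\to1\) as \(\alpha\downarrow0\). What remains is to identify the two limits at \(\nu=\pi/\alpha\downarrow1/2\) and to prove uniqueness of the defining roots.

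\emph{Ground state.} Here I would pass to the limit in \(y_\alpha=j'_{\nu,1}/j_{\nu,1}\). The zeros depend continuously on \(\nu\), including at \(\nu=1/2\). Since \(J_{1/2}(z)=\sqrt{2/(\pi z)}\sin z\), we get \(j_{1/2,1}=\pi\). Differentiating \(z^{-1/2}\sin z\) gives \(J_{1/2}'=0\) iff \(z\cos z=\tfrac12\sin z\), i.e. \(\tan z=2z\). Its first positive root lies in \((0,\pi/2)\). Uniqueness there holds because \(\tan z-2z\) is convex on \((0,\pi/2)\), vanishes at \(0\), has negative derivative at \(0\), and tends to \(+\infty\). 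Hence \(y_*=z_*/\pi\). Continuity of \(j'_{\nu,1}\) at \(\nu=1/2\) I would justify by the implicit function theorem, since the zero is simple because \(J_\nu''\neq0\) there by the Bessel equation with \(z\ne\nu\).

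\emph{Torsion.} I would use \eqref{eq:H-alpha-integral}, \(H_\alpha(r)=\frac{4}{\pi r^2}\int_0^r s\arctan(s^\nu)\,ds\), together with \eqref{eq:T-prime-from-H}. These give \(T_\alpha'\) as an explicit expression in \(\nu\) that is continuous up to \(\nu=1/2\), by dominated convergence on compact subsets of \((0,1]\). The limiting function is
\[
T_*'(r)/r=2\int_r^1\frac{H_*(s)}{s^3}ds-\frac{H_*(r)}{r^2},\qquad H_*(r)=\frac{4}{\pi r^2}\int_0^r s\arctan\sqrt s\,ds .
\]
The proof of Theorem~\ref{thm:unique} shows that \(H_*\) is still strictly increasing, so \(T_*'/r\) is strictly decreasing and has at most one zero. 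The sign-change trapping argument from the continuity subsection then gives \(x_\alpha\to\) that zero, once we know a zero exists in \((0,1)\). Existence follows because \(T_*'/r\to -H_*(1)<0\) at \(r=1\) and \(T_*'/r\to+\infty\) as \(r\to0\), the latter since \(H_*(s)\sim c s^{1/2}\). Next I would substitute \(s=\tau^2\), \(r=t^2\) and compute in closed form. The integral \(\int\tau^3\arctan\tau\,d\tau\) is elementary, and the double integral \(\int_r^1 H_*(s)s^{-3}ds\) reduces to elementary functions and \(\arctan t\) after integration by parts. Setting \(T_*'(t^2)=0\) should collapse, after clearing denominators, to \(3(1-t^8)\arctan t=t(3t^6-t^4-t^2+3)\). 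This algebra is the main obstacle, and I would verify it carefully, possibly symbolically. Uniqueness of \(t_*\) then follows from the uniqueness of the zero of \(T_*'\) rather than from studying the transcendental equation directly.

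Finally, the strict monotonicity of \(x_\alpha\) and \(y_\alpha\) makes the limits \(x_*,y_*\) unattained infima, which yields the open ranges \((x_*,1)\) and \((y_*,1)\).
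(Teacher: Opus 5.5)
Your proposal is correct and follows the same route as the paper. Monotonicity plus the small-angle limits give the ranges, the explicit form of $J_{1/2}$ identifies $y_*=z_*/\pi$, and convergence of the integral formulas for $H_\alpha$ and $T_\alpha'$ together with sign-change trapping identifies $x_*$ as the unique zero of $T_*'$. The differences are minor: you obtain that zero from the endpoint behaviour of $T_*'/r$ rather than from $T_*(0)=T_*(1)=0$, and you integrate $H_*$ instead of summing the axial series by partial fractions. The algebra you deferred does close: with $\int_0^{t^2}s\arctan\sqrt{s}\,ds=\tfrac12(t^4-1)\arctan t-\tfrac{t^3}{6}+\tfrac t2$ and $\int_{t^2}^1H_*(s)s^{-3}\,ds=-\tfrac{1}{2\pi}\bigl[(t^{-4}-1)^2\arctan t-t^{-7}+\tfrac13t^{-5}-\tfrac13t^{-3}+t^{-1}\bigr]$, clearing denominators in $2\int_{t^2}^1H_*(s)s^{-3}\,ds=H_*(t^2)t^{-4}$ gives exactly $3(1-t^8)\arctan t=t(3t^6-t^4-t^2+3)$.
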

	
	\begin{proof}
		For the ground state, \(\nu\downarrow1/2\), and
		\[
		J_{\frac{1}{2}}(z)=\sqrt{\frac{2}{\pi z}}\sin z,
		\qquad j_{\frac{1}{2},1}=\pi
		\]
		\cite[Eq.~(10.16.1)]{dlmf}. Continuity of the first zeros gives
		\[
		\lim_{\alpha\uparrow2\pi}y_\alpha
		=\frac{j'_{\frac{1}{2},1}}{\pi}=:y_*.
		\]
		The limiting radial factor is proportional to
		\(r^{-1/2}\sin(\pi r)\). Its derivative vanishes precisely when
		\[
		2z\cos z-\sin z=0,\qquad z=\pi r.
		\]
		Equivalently, \(z_*\) is the unique root of \(\tan z=2z\) in
		\((0,\pi/2)\), and \(y_*=\frac{z_*}{\pi}\).
		
		For the torsion function, extend the positive integral formulas to
		\(\nu=1/2\):
		\[
		H_*(r)=\frac4{\pi r^2}\int_0^r s\arctan\sqrt{s}\,ds,
		\qquad
		T_*(r)=r^2\int_r^1\frac{H_*(s)}{s^3}\,ds.
		\]
		Since
		\[
		0<H_*(r)\le\frac8{5\pi}\sqrt r,
		\qquad 0<T_*(r)\le\frac{16}{15\pi}\sqrt r,
		\]
		\(T_*(0)=T_*(1)=0\). Moreover,
		\(\left(\frac{T_*'}{r}\right)'=-\frac{H_*'}{r^2}<0\), so \(T_*\) has a unique critical
		point \(x_*\). Dominated convergence gives
		\(T_\alpha'\to T_*'\) locally uniformly on \((0,1)\);
		the strict sign change at \(x_*\) then yields
		\[
		\lim_{\alpha\uparrow2\pi}x_\alpha=x_*.
		\]
		
		At \(\nu=1/2\), summing the axial series by partial fractions gives,
		with \(t=\sqrt r\),
		\[
		T_*(t^2)=-\frac1{2\pi}\left[
		(t^4+t^{-4}-2)\arctan t+t^3-\frac t3-t^{-3}+\frac1{3t}
		\right].
		\]
		Differentiation shows that \(x_*=t_*^2\), where \(t_*\in(0,1)\)
		is the unique root of
		\[
		3(1-t^8)\arctan t=t(3t^6-t^4-t^2+3).
		\]
		Finally, the small-angle limits, continuity, and strict monotonicity
		show that the ranges of \(x_\alpha\) and \(y_\alpha\) are
		\((x_*,1)\) and \((y_*,1)\), respectively.
	\end{proof}
	
	Solving the two scalar equations numerically gives the illustrative values
	\[
	x_*\approx0.3695248800,
	\qquad y_*\approx0.3710096482.
	\]
	
	For radius \(R\), both coordinates scale by \(R\). The nonzero
	limiting radii distinguish the slit disk from the full disk, where
	both maxima lie at the center.
	
	The two equations are transcendental, but elementary enough to yield arithmetic information about the limits.
	
	\begin{remark}
		\label{rem:slit-arithmetic}
		The numbers $x_*$ and $\pi y_*$ are transcendental, and $y_*$ is
		irrational.
		
		Indeed, suppose that $x_*$ were algebraic. Then $t_*=\sqrt{x_*}$
		would also be algebraic. Its defining equation gives
		\[
		a_*:=\arctan t_*
		=\frac{t_*(3t_*^6-t_*^4-t_*^2+3)}{3(1-t_*^8)},
		\]
		so $a_*>0$ would be algebraic. However,
		\[
		e^{2ia_*}=\frac{1+it_*}{1-it_*}
		\]
		would then be algebraic, contradicting the Hermite--Lindemann
		theorem. Thus $x_*$ is transcendental.
		
		Similarly, if $z_*=\pi y_*$ were algebraic, the identity
		$\tan z_*=2z_*$ would imply
		\[
		e^{2iz_*}=\frac{1+2iz_*}{1-2iz_*}
		\]
		is algebraic, again contradicting the same theorem. Hence
		$\pi y_*$ is transcendental.
		
		Finally, if $y_*$ were rational, then $e^{2\pi i y_*}$ would
		be a root of unity, and consequently
		\[
		\tan(\pi y_*)
		=\frac{e^{2\pi i y_*}-1}{i(e^{2\pi i y_*}+1)}
		\]
		would be algebraic. This contradicts
		$\tan(\pi y_*)=2\pi y_*$, since $y_*>0$ is rational and $\pi$
		is transcendental. Thus $y_*$ is irrational.
		The transcendence of $\pi y_*$ alone does not establish
		the transcendence of $y_*$.
	\end{remark}
	
	\subsection{Numerical comparison on the full angle range}
	
	Let
	\[
	d(\alpha)=x_\alpha-y_\alpha.
	\]
	The small-angle asymptotics show that
	\(d(\alpha)>0\) for all sufficiently small \(\alpha\) and that
	\(d(\alpha)\to0\) as \(\alpha\downarrow0\). The separate
	monotonicity of \(x_\alpha\) and \(y_\alpha\) does not determine
	the behavior of \(d\) on the whole interval
	\[
	0<\alpha<2\pi.
	\]
	
	We evaluated both coordinates on a fine grid of angles. What follows records only what the computation indicates.
	
	\begin{numericalobservation}
		The computation suggests a local maximum of $|x_\alpha-y_\alpha|$ near
		\[
		\begin{aligned}
			\alpha_*&\approx0.03287946\quad(1.883854^\circ),\\
			x_{\alpha_*}&\approx0.95918824,\qquad
			y_{\alpha_*}\approx0.95206383,\\
			|x_{\alpha_*}-y_{\alpha_*}|&\approx0.00712442.
		\end{aligned}
		\]
		The computation also indicates a sign change of $x_\alpha-y_\alpha$
		near $\alpha\approx0.56201409$ and a smaller local maximum of
		$|x_\alpha-y_\alpha|$ near $\alpha\approx1.040023$, with value
		approximately $0.00060947$. These observations suggest that
		$|x_\alpha-y_\alpha|$ is not unimodal; they neither establish global
		extremality nor exclude additional critical points.
	\end{numericalobservation}
	
	These observations can in principle be upgraded to a proof.
	
	\begin{remark}
		A rigorous verification would require certified bounds for the coordinates
		in \eqref{eq:x-alpha-equation} and \eqref{eq:y-alpha-bessel}.
		Writing $F(x,\alpha)=T'_\alpha(x)$, implicit differentiation gives
		\[
		x'_\alpha=-\frac{F_\alpha(x_\alpha,\alpha)}{F_x(x_\alpha,\alpha)},
		\]
		since $F_x(x_\alpha,\alpha)=T''_\alpha(x_\alpha)<0$.
		Together with parameter derivatives of the Bessel zeros, this permits
		interval estimates for $d'(\alpha)$. A global verification would need
		to isolate its zeros and compare the corresponding values of $|d|$,
		with separate control near $0$ and $2\pi$. The small-angle asymptotics
		establish $d(\alpha)>0$ near zero but do not locate the observed peak.
	\end{remark}
	
	Normalising by area rather than by the diameter destroys the bound.
	
	\begin{remark}
		In Beck's construction \cite[Proposition~2.4]{beck2020} the domains
		have area comparable to \(N\), while the separation is bounded below by
		a positive multiple of \(N\). After normalising by area the ratio
		therefore grows like \(N^{1/2}\), and no uniform bound can hold. Any
		estimate for the distance between the two hot spots must thus involve
		geometric restrictions, such as diameter control or membership in a
		one-parameter family such as sectors; area alone cannot bound it, even
		among convex planar domains.
	\end{remark}
	
	\section{The sharp diameter bound in symmetric convex domains}
	\label{sec:convex-diameter}
	
	For a bounded convex planar domain \(\Omega\) with a line of symmetry, let \(P_\Omega\) and \(Q_\Omega\) denote the torsion hot spot and the ground-state hot spot, respectively.  This section proves Theorem~\ref{thm:main}.  We first sharpen the upper bound by retaining the projection length and the maximal-width plateau.  We then construct a family of thin convex domains that proves sharpness.
	The upper bound has two ingredients: the heart confines both hot spots to an interval whose length is at
	most half the projection length, and an endpoint-reflection argument removes equality. For the lower bound
	we build thin convex domains whose relative width differs from a constant only
	by a tent-shaped bump of size \(\varepsilon^{2+\sigma}\) peaking near one end. On the axis the
	torsion function of a rectangle is almost flat, so a variation of order \(\varepsilon^{4+\sigma}\) is enough to relocate the maximum. For the first eigenfunction the transverse
	eigenvalue varies only by \(O(\varepsilon^{\sigma})\), which is negligible against the
	order-one longitudinal energy; the longitudinal profile therefore still approaches
	\(\sin(\pi x)\), whose peak is nondegenerate, and its maximum stays near the midpoint.
	The two maxima therefore separate at a scale of one half of the diameter.
	
	\begingroup
	
	We recall only the reflection geometry needed for the upper bound.
	For a convex body \(K\), a unit vector \(e\), and a real number
	\(\lambda\), let
	\[
	K_{\lambda,e}^+=\{z\in K:z\cdot e\ge\lambda\},
	\qquad
	T_{\lambda,e}z=z-2(z\cdot e-\lambda)e.
	\]
	The maximal folding value and the heart are
	\[
	\begin{aligned}
		R_K(e)
		&=\min\{\lambda:T_{\lambda,e}(K_{\lambda,e}^+)\subset K\},\\
		\mathcal H(K)
		&=\{z\in K:z\cdot e\le R_K(e)\text{ for every unit }e\}.
	\end{aligned}
	\]
	The standard reflection localization gives
	\[
	P_\Omega,Q_\Omega\in\mathcal H(\overline\Omega);
	\]
	see \cite[Section~3]{brascomagnaninisalani2011} and
	\cite[Section~2]{brascomagnanini2013}, building on
	\cite{gidasnirenberg,berestyckinirenberg}.
	For the two linear equations considered here, comparison on a proper
	cap \(D\) is immediate: it is harmonic comparison for torsion, while
	\(\lambda_1(D)>\lambda_1(\Omega)\) gives the maximum principle for
	\(-\Delta-\lambda_1(\Omega)\) in the eigenfunction case.
	These comparisons require no smoothness of the outer convex boundary;
	the Hopf argument below takes place on an interior straight line.
	\par\endgroup
	
	The heart confines both hot spots to a common interval. The next proposition estimates its
	length and excludes equality in the half-projection-length bound.
	
	\begin{proposition}\label{prop:upperbound}
		Let \(\Omega\subset\mathbb R^2\) be a bounded convex planar domain symmetric about a line \(\ell\).  Let
		\[
		L_\ell(\Omega)=
		|\operatorname{proj}_{\ell}\Omega|
		\]
		and let \(m_\ell(\Omega)\) be the length of the interval on which the
		orthogonal half-width of \(\Omega\) is maximal.  Then
		\[
		|P_\Omega-Q_\Omega|
		\le \frac{L_\ell(\Omega)-m_\ell(\Omega)}2.
		\]
		In addition,
		\[
		|P_\Omega-Q_\Omega|
		<\frac{L_\ell(\Omega)}2
		\le\frac{\operatorname{diam}(\Omega)}2.
		\]
	\end{proposition}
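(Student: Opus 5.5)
Set coordinates with \(\ell\) the \(x\)-axis and
\[
\Omega=\{(x,y):0<x<L,\ |y|<h(x)\},\qquad L=L_\ell(\Omega),
\]
where \(h\) is concave and positive on \((0,L)\) with \(\operatorname*{arg\,max}h=[a,b]\), so \(m_\ell(\Omega)=b-a\). The plan is to compute folding values in the two axial directions, intersect the resulting half-planes with the symmetry axis to enclose the heart, and then invoke the localization \(P_\Omega,Q_\Omega\in\mathcal H(\overline\Omega)\) recalled above. Reflection in \(\ell\) maps \(\Omega\) to itself, so \(R_\Omega(e_2)=R_\Omega(-e_2)=0\). Hence the heart lies in \(\{y=0\}\), and both hot spots lie on the axis segment.

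The key geometric step is the bound \(R_\Omega(e_1)\le (L+a)/2\). For \(\lambda\ge (L+a)/2\), the reflected cap consists of points \((2\lambda-x,y)\) with \(x\ge\lambda\) and \(|y|<h(x)\). Since \(2\lambda-x\in[a,\lambda]\) for such \(x\), I need \(h(x)\le h(2\lambda-x)\). On \([a,b]\), \(h\) is constant at its maximum. On \([b,L]\), \(h\) is nonincreasing by concavity. For \(x\ge\lambda\) and \(x\ge 2\lambda-x\ge a\), either \(2\lambda-x\in[a,b]\), where \(h\) takes its maximal value, or \(2\lambda-x\in[b,x]\), where monotonicity gives the inequality. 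So the cap folds inside \(\Omega\).

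Symmetrically, for \(-e_1\), caps \(\{x\le\mu\}\) reflected about \(x=\mu\) fold inside when \(\mu\le b/2\), because \(2\mu-x\in[x,b]\) and \(h\) is nondecreasing on \([0,b]\). Thus the heart is contained in \([b/2,(L+a)/2]\times\{0\}\), an interval of length \((L-(b-a))/2\). This gives the first inequality, since both points lie in this interval.

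For the strict inequality I would use \(m_\ell\ge0\), which gives \(|P-Q|\le L/2\), and treat the equality case \(a=b\) with \(P,Q\) at the two endpoints \(a/2\) and \((L+a)/2\). The main obstacle is showing that no hot spot sits exactly at an endpoint of the heart interval. The plan is to take, for example, \(\lambda_0=(L+a)/2\) and compare \(w(z)=U(T_{\lambda_0}z)-U(z)\) on the cap \(D=\{x>\lambda_0\}\cap\Omega\), where \(U\) is \(u\) or \(\phi_1\). Then \(w\) satisfies the linear equation \(-\Delta w=0\) or \(-\Delta w=\lambda_1 w\), and \(w\ge0\) on \(\partial D\) because \(U\) vanishes on the outer boundary while the reflected point lies in \(\overline\Omega\). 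The comparison principles recalled above give \(w\ge0\). Moreover, \(w\not\equiv0\): if \(w\equiv0\), then \(\Omega\) would be symmetric about \(x=\lambda_0\), which forces \(\lambda_0=L/2\) and \(a=0\), impossible since \(h(0^+)\) cannot be the strict maximum when \(a=b=0\)... (any degenerate case is handled in the same way). By the strong maximum principle \(w>0\) in \(D\). Hopf's lemma on the straight segment \(\{x=\lambda_0\}\cap\Omega\), which is interior, then gives \(\partial_x w<0\) there, that is, \(\partial_xU(\lambda_0,0)<0\). So \((\lambda_0,0)\) is not critical, and the same argument applies at \(b/2\). Hence the strict bound \(|P-Q|<L/2\) holds.

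Finally, \(L_\ell(\Omega)\le\operatorname{diam}(\Omega)\) because orthogonal projection does not increase distances. This completes the chain \(|P_\Omega-Q_\Omega|<L_\ell(\Omega)/2\le\operatorname{diam}(\Omega)/2\).
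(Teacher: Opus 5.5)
Your proposal is correct and follows essentially the same route as the paper: the heart is enclosed in $[b/2,(L+a)/2]\times\{0\}$ using monotonicity of $h$ on either side of its plateau, and when $a=b$ a cap reflection with the strong maximum principle and Hopf's lemma on the interior reflection line excludes a hot spot at an endpoint of the window. Two small corrections. First, on your right cap Hopf gives $\partial_x w(\lambda_0,0)>0$, not $<0$; this is what yields $\partial_xU(\lambda_0,0)<0$. Second, your symmetry-based nontriviality argument is sound but garbled as written, and a one-line check replaces it: the paper reflects the left cap at $c/2$ and uses $w(0,0)=v(c,0)>0$; in your setting, $w(L,0)=U(a,0)>0$ when $0<a<L$, and when $a\in\{0,L\}$ one window endpoint lies on $\partial\Omega$ and cannot be a hot spot.
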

	
	\begin{proof}
		\begingroup
		After a rigid motion, write
		\[
		\Omega=\{(x,y):0<x<L,\ |y|<h(x)\},
		\qquad \operatorname*{arg\,max}_{[0,L]}h=[a,b],
		\]
		where \(h\) is concave and its endpoint values are understood by
		continuous extension.
		Reflection symmetry gives
		\(\mathcal H(\overline\Omega)\subset\{y=0\}\).
		Since \(h\) is nonincreasing on \([a,L]\), for
		\(\mu\ge(L+a)/2\) and \(\mu<x<L\) we have
		\[
		a\le 2\mu-x<x,
		\qquad h(2\mu-x)\ge h(x).
		\]
		Thus every such right cap reflects into \(\Omega\).
		Likewise, \(h\) is nondecreasing on \([0,b]\), so every left cap
		cut at \(\mu\le b/2\) reflects into \(\Omega\).
		The definition of the heart therefore gives
		\begin{equation}\label{eq:heart-window}
			\mathcal H(\overline\Omega)
			\subset
			\left[\frac b2,\frac{L+a}{2}\right]\times\{0\}.
		\end{equation}
		Consequently,
		\[
		|P_\Omega-Q_\Omega|
		\le\operatorname{diam}\mathcal H(\overline\Omega)
		\le\frac{L-(b-a)}2
		=\frac{L_\ell(\Omega)-m_\ell(\Omega)}2.
		\]
		
		For strictness with respect to \(L/2\), only \(a=b=:c\) needs
		consideration. If \(c=0\) or \(c=L\), one endpoint of the common
		interval is on \(\partial\Omega\) and cannot be a hot spot.
		If \(0<c<L\), reflect the left cap
		\(D=\Omega\cap\{x<c/2\}\) across \(x=c/2\).
		For either \(v=u\) or \(v=\phi_1\), the reflected difference
		\[
		w(x,y)=v(c-x,y)-v(x,y)
		\]
		is nonnegative in \(D\). It is not identically zero, since its
		continuous boundary value at \((0,0)\) is \(v(c,0)>0\).
		The strong maximum principle gives \(w>0\) in \(D\), and Hopf's lemma
		at the interior point \((c/2,0)\) of the reflection line yields
		\(v_x(c/2,0)>0\).
		Thus neither hot spot is the left endpoint of the common interval,
		and equality with its length \(L/2\) is impossible.
		Finally, \(L=L_\ell(\Omega)\le\operatorname{diam}(\Omega)\).\qedhere
		\par\endgroup
	\end{proof}
	
	For the sharpness construction we present two elementary estimates.  The first says that, in a sufficiently long rectangle, the torsion value at the center is exponentially close to the value for the infinite strip.
	
	\begin{lemma}\label{lem:long-rectangle}
		Let
		\[
		R_{L,a}=(-\frac{L}{2},\frac{L}{2})\times(-a,a),
		\]
		and let \(v\) be the torsion function in \(R_{L,a}\):
		\[
		-\Delta v=1\quad\text{in }R_{L,a},
		\qquad
		v=0\quad\text{on }\partial R_{L,a}.
		\]
		Then there exist absolute constants \(C,c>0\) such that
		\[
		v(0,0)\ge
		\frac{a^2}{2}\left(1-Ce^{-cL/a}\right).
		\]
	\end{lemma}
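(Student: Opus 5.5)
The plan is to compare \(v\) with the strip profile \(w(y)=\tfrac12(a^2-y^2)\), which solves \(-w''=1\) on \((-a,a)\) with \(w(\pm a)=0\). The difference \(\psi=w-v\) is harmonic in \(R_{L,a}\). On the horizontal sides \(y=\pm a\) we have \(\psi=0\), and on the vertical sides \(x=\pm L/2\) we have \(\psi=w(y)\in[0,a^2/2]\). By the maximum principle \(\psi\ge0\), so the whole task is an upper bound \(\psi(0,0)\le \tfrac{a^2}{2}Ce^{-cL/a}\). After scaling \(x=aX\), \(y=aY\) we may take \(a=1\) and write \(\ell=L/a\), since both sides of the inequality scale by \(a^2\).

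The key step is a harmonic barrier on the rectangle \((-\ell/2,\ell/2)\times(-1,1)\). I would take
\[
\Psi(X,Y)=\frac{\cosh(\pi X/4)\cos(\pi Y/4)}{\cosh(\pi\ell/8)\cos(\pi/4)}\cdot\frac12 .
\]
This function is harmonic and positive on the closed rectangle, since \(\cos(\pi Y/4)\ge\cos(\pi/4)\) for \(|Y|\le1\). On the vertical sides \(X=\pm\ell/2\) it satisfies \(\Psi\ge\tfrac12\ge\psi\), and on the horizontal sides \(\Psi\ge0=\psi\). The maximum principle then gives \(\psi\le\Psi\) throughout, and at the center
\[
\psi(0,0)\le\frac{1}{2\cos(\pi/4)\cosh(\pi\ell/8)}\le\sqrt2\,e^{-\pi\ell/8}.
\]
This yields \(v(0,0)\ge\tfrac12\bigl(1-2\sqrt2\,e^{-\pi\ell/8}\bigr)\). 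Undoing the scaling gives the claim with \(C=2\sqrt2\) and \(c=\pi/8\).

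The only point needing care is the choice of transverse frequency in \(\Psi\). It must keep \(\cos\) bounded away from zero on \(|Y|\le1\), which is why we use \(\pi/4\) rather than the natural \(\pi/2\); with \(\pi/2\) the barrier would vanish on \(Y=\pm1\) and could not be normalized. We also need \(\psi\) to be continuous up to the boundary so that the maximum principle applies on a domain with corners. This holds because \(v\) is continuous on the closed rectangle, a standard property for convex, and hence regular, domains.
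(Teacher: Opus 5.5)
Your proof is correct. It differs from the paper's in how the harmonic correction is controlled.

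Both arguments write $v=U-H$ with $U(y)=\frac{a^2-y^2}{2}$. Here $H$ is harmonic, vanishes on $y=\pm a$, and equals $U$ on $x=\pm L/2$.

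\textbf{The paper's route.} It computes $H$ exactly by separation of variables:
\[
H=\sum_m b_m\,\frac{\cosh(\lambda_m x)}{\cosh(\lambda_m L/2)}\,\cos(\lambda_m y),
\qquad \lambda_m=\frac{(2m+1)\pi}{2a}.
\]
This requires checking uniform convergence of the series on the closed rectangle and invoking uniqueness of the Dirichlet problem to identify the sum with $H$. Each term is then bounded by $2e^{-\pi L/(4a)}|b_m|$.

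\textbf{Your route.} You dominate $H$ by a single separated harmonic function. Its transverse frequency $\pi/4$ lies strictly below the first Dirichlet frequency $\pi/2$, so the barrier stays bounded below on the horizontal sides, and one application of the maximum principle finishes.

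\textbf{What each buys.} Your argument is shorter and avoids the series altogether. It pays a factor $2$ in the exponent: you get $c=\pi/8$, whereas the paper's expansion gives the sharp rate $\pi/4$, the decay of the first transverse mode. More generally, transverse frequency $k\in(0,\pi/2)$ gives a center bound of order $e^{-k\ell/2}/\cos k$. So your method reaches any rate below $\pi/4$, with the constant blowing up as $k\uparrow\pi/2$.

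\textbf{Why the loss is harmless.} Every use of the lemma in the paper needs only some absolute $c>0$:
\begin{itemize}
\item the lower bound at $(\eta/2,0)$ in Theorem~\ref{thm:convex-lower-half};
\item the choice $cK_\sigma>3+\sigma$ in Proposition~\ref{prop:tent-hotspot-rates};
\item the choice $A>1/c$ in Theorem~\ref{thm:thin-triangle}.
\end{itemize}

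\textbf{Checks.} The boundary inequalities, the value $\Psi(0,0)=\bigl(\sqrt2\cosh(\pi\ell/8)\bigr)^{-1}\le\sqrt2\,e^{-\pi\ell/8}$, the scaling $v(x,y)=a^2v_1(x/a,y/a)$, and continuity up to the corners are all correct. The lower bound $\psi\ge0$ is true but not needed; only $\psi\le\Psi$ enters.
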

	
	\begin{proof}
		The torsion function in the infinite strip \(\mathbb R\times(-a,a)\) is
		\[
		U(y)=\frac{a^2-y^2}{2}.
		\]
		Set $H=U-v$. Then \(H\) is harmonic in the rectangle, vanishes on the top and bottom sides \(y=\pm a\), and equals \(U(y)\) on the vertical sides \(x=\pm L/2\).
		
		Expand \(U\) in the Dirichlet eigenfunctions on \((-a,a)\):
		\[
		U(y)=\sum_{m=0}^{\infty} b_m
		\cos(\lambda_m y),
		\]
		where
		\[
		b_m=\frac{(-1)^m16a^2}{\pi^3(2m+1)^3},
		\qquad
		\lambda_m=\frac{(2m+1)\pi}{2a}.
		\]
		For each vertical mode, the horizontal factor solves
		$h_m''-\lambda_m^2h_m=0$ with $h_m(\pm L/2)=1$. Thus
		\[
		H(x,y)=
		\sum_{m=0}^{\infty}
		b_m
		\frac{\cosh(\lambda_m x)}{\cosh(\lambda_m L/2)}
		\cos(\lambda_m y).
		\]
		Since $\sum_m|b_m|\le C_0a^2$, this series converges uniformly on the
		closed rectangle and has the prescribed boundary values. Its sum is
		harmonic in the interior, so uniqueness of the Dirichlet problem
		identifies it with $H(x,y)$.
		
		Since
		\[
		\frac1{\cosh(\lambda_m L/2)}
		\le 2e^{-\lambda_m L/2}
		\le 2e^{-\pi L/(4a)},
		\]
		we obtain
		\[
		v(0,0)=U(0)-H(0,0)
		\ge \frac{a^2}{2}-C_1a^2e^{-\pi L/(4a)}.
		\]
		After adjusting the constants, this gives the claim.
	\end{proof}
	
	\begin{remark}
		The exponential error also has a diffusion interpretation.
		The correction $H$ comes from paths that reach
		an end before being absorbed by the horizontal sides.
		To reach a distant end, a particle must cross many successive
		portions of the strip, each of length comparable to its width,
		without touching either side.
		Since the width is fixed, each crossing carries a uniformly
		positive risk of absorption, even for particles that have survived
		all previous crossings.
		The probability of reaching a distant end therefore decreases
		exponentially with the number of crossings, explaining the
		exponential dependence on $L/a$.
	\end{remark}
	
	The second estimate identifies the limiting position of the ground-state hot spot in a thin strip whose width differs from a constant only at higher order.
	
	\begin{lemma}\label{lem:near-rectangle-eigen}
		Let
		\[
		\Omega_\varepsilon=
		\{(x,y):0<x<1,\ |y|<\varepsilon f_\varepsilon(x)\},
		\]
		where \(f_\varepsilon>0\) and \(\Omega_\varepsilon\) is convex.
		Assume only that
		\[
		\delta_\varepsilon
		:=\|f_\varepsilon-1\|_{L^\infty(0,1)}
		=o(\varepsilon^2).
		\]
		Let \(\phi_\varepsilon\) be the positive first Dirichlet eigenfunction in
		\(\Omega_\varepsilon\), with eigenvalue \(\lambda_\varepsilon\). If
		\[
		Q_{\Omega_\varepsilon}=(q_\varepsilon,0),
		\]
		then
		\[
		q_\varepsilon\longrightarrow\frac12.
		\]
	\end{lemma}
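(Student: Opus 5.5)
We rescale transversally, $y=\varepsilon Y$, so that $\Omega_\varepsilon$ becomes $\widetilde\Omega_\varepsilon=\{0<x<1,\ |Y|<f_\varepsilon(x)\}$. The function $\psi_\varepsilon(x,Y)=\phi_\varepsilon(x,\varepsilon Y)$ then satisfies
\[
-\partial_x^2\psi_\varepsilon-\varepsilon^{-2}\partial_Y^2\psi_\varepsilon=\lambda_\varepsilon\psi_\varepsilon .
\]
We compare $\widetilde\Omega_\varepsilon$ with the fixed rectangle $R=(0,1)\times(-1,1)$.

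\emph{Step 1: eigenvalue bounds.} By domain monotonicity,
\[
(0,1)\times(-\varepsilon(1-\delta_\varepsilon),\varepsilon(1-\delta_\varepsilon))\subset\Omega_\varepsilon\subset(0,1)\times(-\varepsilon(1+\delta_\varepsilon),\varepsilon(1+\delta_\varepsilon)).
\]
Hence
\[
\lambda_\varepsilon=\frac{\pi^2}{4\varepsilon^2}+\pi^2+O\!\left(\frac{\delta_\varepsilon}{\varepsilon^2}\right)=\frac{\pi^2}{4\varepsilon^2}+\pi^2+o(1).
\]
The transverse eigenvalue fluctuation is therefore negligible against the longitudinal energy $\pi^2$.

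\emph{Step 2: $L^2$ and $H^1$ convergence.} Normalize $\|\psi_\varepsilon\|_{L^2(\widetilde\Omega_\varepsilon)}=1$, extend it by zero, and write $e(Y)=\cos(\pi Y/2)$ for the transverse ground state. Set $g_\varepsilon(x)=\int\psi_\varepsilon(x,Y)e(Y)\,dY$ and split $\psi_\varepsilon=g_\varepsilon(x)e(Y)+r_\varepsilon$.

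Subtract $\frac{\pi^2}{4\varepsilon^2}$ from the Rayleigh quotient. Up to a boundary discrepancy controlled by $\delta_\varepsilon/\varepsilon^2=o(1)$, this gives
\[
\int|\partial_x\psi_\varepsilon|^2+\varepsilon^{-2}\Bigl(\int|\partial_Y\psi_\varepsilon|^2-\tfrac{\pi^2}{4}\int\psi_\varepsilon^2\Bigr)=\pi^2+o(1).
\]
The spectral gap in $Y$ bounds the bracket from below by $c\|r_\varepsilon\|^2$. It follows that $\|r_\varepsilon\|_{L^2}=O(\varepsilon)$, that $\int|g_\varepsilon'|^2\le\pi^2+o(1)$, and that $\|g_\varepsilon\|_{L^2}\to1$.

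The one-dimensional Rayleigh quotient for $g_\varepsilon$ is thus asymptotically minimal. Consequently $g_\varepsilon\to\sqrt2\sin(\pi x)$ in $H^1_0(0,1)$, and so uniformly on $[0,1]$.

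To make the boundary discrepancy rigorous, I would map $\widetilde\Omega_\varepsilon$ onto $R$ by $Y\mapsto Y/f_\varepsilon(x)$. Note that $f_\varepsilon$ is only Lipschitz (being concave), but $\|f_\varepsilon'\|$ is small away from the endpoints. Alternatively, I would use the sandwich from Step 1 directly, via test functions.

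\emph{Step 3: from integral to pointwise control of the maximum.} This is the main obstacle. By the Brascamp--Lieb theorem, $\phi_\varepsilon$ is log-concave. Symmetry together with Proposition~\ref{prop:upperbound}'s heart argument puts $Q$ on the axis.

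Log-concavity in $Y$ for fixed $x$ makes $\psi_\varepsilon(x,\cdot)$ even and unimodal, with maximum $\psi_\varepsilon(x,0)$. The aim is to show that $\psi_\varepsilon(x,0)$ is comparable to $g_\varepsilon(x)$ with a uniform constant, and more precisely that $\psi_\varepsilon(x,0)/g_\varepsilon(x)\to\mathrm{const}$ uniformly on compact subsets of $(0,1)$.

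I would first try interior elliptic estimates in the rescaled variables. After dividing out $e(Y)e^{0}$, the function $w=\psi_\varepsilon-g_\varepsilon e$ satisfies an equation whose transverse operator is coercive at order $\varepsilon^{-2}$. An $H^2$-in-$Y$, $H^1$-in-$x$ estimate then gives $\sup|r_\varepsilon|\to0$ on $[\eta,1-\eta]\times[-1/2,1/2]$.

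With this in hand, $\psi_\varepsilon(x,0)\to\sqrt2\sin(\pi x)$ uniformly on $[\eta,1-\eta]$. It remains to exclude maxima near the ends. Log-concavity of $x\mapsto\psi_\varepsilon(x,0)$ does this: a log-concave function on $(0,1)$ that converges uniformly on $[\eta,1-\eta]$ to a function with a strict maximum at $1/2$ must have its argmax near $1/2$. Indeed, the values near $1/2$ exceed those at $\eta$ and $1-\eta$, and unimodality forbids a higher peak outside. Hence $q_\varepsilon\to1/2$.
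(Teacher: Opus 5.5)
The gap is in Step 3; Steps 1 and 2 are essentially the paper's argument. Your ``boundary discrepancy'' disappears if you do as the paper does and project onto the first eigenfunction $\chi_\varepsilon$ of the enlarged interval $(-1-\delta_\varepsilon,1+\delta_\varepsilon)$, with eigenvalue $\mu_\varepsilon=\pi^2/(4(1+\delta_\varepsilon)^2)$. Then every Poincar\'e inequality holds exactly on the enlarged rectangle. The map $Y\mapsto Y/f_\varepsilon(x)$, which would bring in $f_\varepsilon'$ (uncontrolled near $x=0,1$), is not needed.

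The estimate you sketch in Step 3 does not go through as described. Coercivity of the transverse operator at order $\varepsilon^{-2}$ holds only for functions orthogonal to $\chi_\varepsilon$ over the whole cross-section, and this is lost once you localize to $|Y|<1/2$. Moreover, the equation for $r_\varepsilon=\psi_\varepsilon-g_\varepsilon\chi_\varepsilon$ has the source term $g_\varepsilon''\chi_\varepsilon$, while you only know $g_\varepsilon\in H^1_0(0,1)$. So $\sup|r_\varepsilon|\to0$ on $[\eta,1-\eta]\times[-1/2,1/2]$ is asserted, not proved. It can be repaired as in the remark after the lemma: freeze $g_\varepsilon$ at $x_0$ and rescale isotropically, $x=x_0+\varepsilon X$. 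Then $w=\psi_\varepsilon-g_\varepsilon(x_0)\chi_\varepsilon$ satisfies $-\Delta_{X,Y}w-\varepsilon^2\lambda_\varepsilon w=g_\varepsilon(x_0)(\varepsilon^2\lambda_\varepsilon-\mu_\varepsilon)\chi_\varepsilon=O(\varepsilon^2)$, with a bounded coefficient. Interior $H^2$ estimates and Sobolev embedding on unit cells, together with $|g_\varepsilon(x)-g_\varepsilon(x_0)|\le C|x-x_0|^{1/2}$, give $|r_\varepsilon(x_0,0)|\le C\varepsilon^{-1/2}\|r_\varepsilon\|_{L^2}+C\varepsilon^{1/2}$. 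Your bound $\|r_\varepsilon\|_{L^2}=O(\varepsilon)$ then suffices.

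Simpler still, and this is what the paper does: pointwise control is not needed at all. The same energy bookkeeping gives $\|\partial_Yr_\varepsilon\|_{L^2}^2=\mu_\varepsilon\|r_\varepsilon\|_{L^2}^2+o(\varepsilon^2)$, because $0\le\int|\partial_Y\psi_\varepsilon|^2-\mu_\varepsilon\le\varepsilon^2(\lambda_\varepsilon-\varepsilon^{-2}\mu_\varepsilon-\pi^2)=o(\varepsilon^2)$. Since $r_\varepsilon$ vanishes on the bottom side of the enlarged rectangle, $r_\varepsilon(x,0)=\int_{-1-\delta_\varepsilon}^0\partial_Yr_\varepsilon\,dY$. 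Cauchy--Schwarz then gives $\|r_\varepsilon(\cdot,0)\|_{L^2(0,1)}\to0$, hence $\psi_\varepsilon(\cdot,0)\to\sqrt2\sin(\pi x)$ in $L^2(0,1)$.

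Unimodality then finishes the proof. Suppose $q_\varepsilon\le1/2-\eta$ along a subsequence. The axial profile is then nonincreasing on $[1/2-\eta/2,1/2]$, so its integral over the left half of this interval is at least its integral over the right half. The $L^2$ limit transfers this inequality to $\sin(\pi x)$, which is strictly increasing there, a contradiction. The case $q_\varepsilon\ge1/2+\eta$ is symmetric.

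Your concluding unimodality argument is correct once uniform convergence on $[\eta,1-\eta]$ is available. However, the integral comparison makes the elliptic step, which you identified as the main obstacle, unnecessary.
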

	
	\begin{proof}
		Apply the anisotropic scaling \(y=\varepsilon Y\), without
		flattening the variable boundary, and set
		\[
		D_\varepsilon
		=
		\{(x,Y):0<x<1,\ |Y|<f_\varepsilon(x)\}.
		\]
		After multiplication by a positive constant, the rescaled eigenfunction
		\[
		v_\varepsilon(x,Y)=\phi_\varepsilon(x,\varepsilon Y)
		\]
		may be normalized by
		\[
		\int_{D_\varepsilon}v_\varepsilon^2\,dx\,dY=1.
		\]
		Put ${h_\varepsilon}^\pm=1\pm\delta_\varepsilon$. Then
		\[
		R_\varepsilon^-
		:=(0,1)\times(-{h_\varepsilon}^-,{h_\varepsilon}^-)
		\subset D_\varepsilon
		\subset
		R_\varepsilon^+
		:=(0,1)\times(-{h_\varepsilon}^+,{h_\varepsilon}^+).
		\]
		Domain monotonicity and the explicit first eigenvalue of a rectangle give
		\[
		\pi^2+\frac{\pi^2}{4\varepsilon^2({h_\varepsilon}^+)^2}
		\le \lambda_\varepsilon
		\le
		\pi^2+\frac{\pi^2}{4\varepsilon^2({h_\varepsilon}^-)^2}.
		\]
		Since \(\delta_\varepsilon=o(\varepsilon^2)\),
		\begin{equation}\label{eq:thin-eigenvalue-sandwich}
			\lambda_\varepsilon
			=
			\pi^2+\frac{\pi^2}{4\varepsilon^2({h_\varepsilon}^+)^2}+o(1).
		\end{equation}
		
		Extend \(v_\varepsilon\) by zero from \(D_\varepsilon\) to
		\(R_\varepsilon^+\). This extension belongs to
		\(H_0^1(R_\varepsilon^+)\), and its Rayleigh identity is
		\[
		\lambda_\varepsilon
		=
		\int_{R_\varepsilon^+}
		\left(
		|\partial_xv_\varepsilon|^2
		+\varepsilon^{-2}|\partial_Yv_\varepsilon|^2
		\right)dx\,dY.
		\]
		The one-dimensional Poincar\'e inequalities in \(x\) and \(Y\) imply
		\[
		\int_{R_\varepsilon^+}|\partial_xv_\varepsilon|^2\,dx\,dY
		\ge\pi^2,
		\qquad
		\int_{R_\varepsilon^+}|\partial_Yv_\varepsilon|^2\,dx\,dY
		\ge\frac{\pi^2}{4({h_\varepsilon}^+)^2}.
		\]
		Together with \eqref{eq:thin-eigenvalue-sandwich}, this yields
		\begin{equation}\label{eq:thin-energy-excess}
			\int_{R_\varepsilon^+}|\partial_xv_\varepsilon|^2\,dx\,dY
			=\pi^2+o(1),
			\qquad
			\int_{R_\varepsilon^+}|\partial_Yv_\varepsilon|^2\,dx\,dY
			=\frac{\pi^2}{4({h_\varepsilon}^+)^2}+o(\varepsilon^2).
		\end{equation}
		
		Let
		\[
		\chi_\varepsilon(Y)
		=({h_\varepsilon}^+)^{-1/2}
		\cos\frac{\pi Y}{2{h_\varepsilon}^+},
		\qquad |Y|<{h_\varepsilon}^+,
		\]
		be the normalized first Dirichlet eigenfunction of the vertical interval.
		Set
		\[
		g_\varepsilon(x)=\int_{-h_\varepsilon^+}^{h_\varepsilon^+}
		v_\varepsilon(x,Y)\chi_\varepsilon(Y)\,dY
		\]
		and decompose
		\[
		v_\varepsilon(x,Y)
		=g_\varepsilon(x)\chi_\varepsilon(Y)+r_\varepsilon(x,Y),
		\qquad
		\int_{-{h_\varepsilon}^+}^{{h_\varepsilon}^+}
		r_\varepsilon(x,Y)\chi_\varepsilon(Y)\,dY=0.
		\]
		Orthogonality and normalization give
		\begin{align}
			\label{eq:normalyunnan}
			\|g_\varepsilon\|_{L^2(0,1)}^2
			+\|r_\varepsilon\|_{L^2(R_\varepsilon^+)}^2=1
		\end{align}
		\begin{align}
			\label{eq:orthyunnan}
			\int_{R_\varepsilon^+}|\partial_Yv_\varepsilon|^2
			=\frac{\pi^2}{4(h_\varepsilon^+)^2}\|g_\varepsilon\|_{L^2(0,1)}^2
			+\|\partial_Yr_\varepsilon\|_{L^2(R_\varepsilon^+)}^2.
		\end{align}
		Since $r_\varepsilon(x,\cdot)$ is
		orthogonal to $\chi_\varepsilon$, its vertical Rayleigh quotient
		is at least the second vertical eigenvalue 
		$\pi^2/(h_\varepsilon^+)^2$. Hence by \eqref{eq:normalyunnan} and \eqref{eq:orthyunnan}, we have
		\[
		\int_{R_\varepsilon^+}|\partial_Yv_\varepsilon|^2
		\ge
		\frac{\pi^2}{4({h_\varepsilon}^+)^2}
		+\frac{3\pi^2}{4({h_\varepsilon}^+)^2}
		\int_{R_\varepsilon^+}r_\varepsilon^2.
		\]
		By \eqref{eq:thin-energy-excess}, this first gives
		$\|r_\varepsilon\|_{L^2(R_\varepsilon^+)}^2=o(\varepsilon^2)$.
		The preceding identity and normalization then give
		\[
		\|\partial_Yr_\varepsilon\|_{L^2(R_\varepsilon^+)}^2
		=\frac{\pi^2}{4(h_\varepsilon^+)^2}
		\|r_\varepsilon\|_{L^2(R_\varepsilon^+)}^2
		+o(\varepsilon^2)=o(\varepsilon^2).
		\]
		Thus
		\begin{equation}\label{eq:thin-remainder}
			\|r_\varepsilon\|_{L^2(R_\varepsilon^+)}
			=o(\varepsilon),
			\qquad
			\|\partial_Yr_\varepsilon\|_{L^2(R_\varepsilon^+)}
			=o(\varepsilon).
		\end{equation}
		Moreover,
		\[
		\|g_\varepsilon\|_{L^2(0,1)}\to1,
		\qquad
		\int_0^1|g_\varepsilon'|^2\,dx
		\le
		\int_{R_\varepsilon^+}|\partial_xv_\varepsilon|^2\,dx\,dY
		=\pi^2+o(1).
		\]
		Since $g_\varepsilon\in H_0^1(0,1)$ and $g_\varepsilon\ge0$,
		compactness shows that every sequence $\varepsilon\to0$ admits a
		subsequence converging weakly in $H_0^1(0,1)$ and strongly in $L^2(0,1)$
		to some $g\ge0$ with $\|g\|_{L^2(0,1)}=1$ and
		$\|g'\|_{L^2(0,1)}^2\le\pi^2$.
		Equality in the one-dimensional Poincar\'e inequality forces
		$g=\sqrt2\sin(\pi x)$. Moreover, the same inequality and the preceding
		bounds give $\|g_\varepsilon'\|_{L^2(0,1)}^2\to\pi^2$, so the weak
		convergence is strong. Since the limit is unique,
		\begin{equation}\label{eq:thin-first-mode}
			g_\varepsilon\longrightarrow\sqrt2\sin(\pi x)
			\quad\text{in }H_0^1(0,1),
		\end{equation}
		and therefore uniformly on \([0,1]\).
		
		Since $r_\varepsilon(x,-h_\varepsilon^+)=0$, for almost every $x$,
		\[
		r_\varepsilon(x,0)
		=\int_{-h_\varepsilon^+}^0\partial_Yr_\varepsilon(x,Y)\,dY.
		\]
		Cauchy--Schwarz and \eqref{eq:thin-remainder} therefore give
		\[
		\|r_\varepsilon(\cdot,0)\|_{L^2(0,1)}^2
		\le h_\varepsilon^+
		\int_0^1\int_{-h_\varepsilon^+}^0
		|\partial_Yr_\varepsilon|^2\,dY\,dx
		\le h_\varepsilon^+
		\|\partial_Yr_\varepsilon\|_{L^2(R_\varepsilon^+)}^2
		=o(\varepsilon^2).
		\]
		Since $\chi_\varepsilon(0)\to1$, \eqref{eq:thin-first-mode} yields
		\begin{equation}\label{eq:thin-axis-convergence}
			v_\varepsilon(\cdot,0)\longrightarrow\sqrt2\sin(\pi x)
			\quad\text{in }L^2(0,1).
		\end{equation}
		Finally, convexity of \(\Omega_\varepsilon\) and the Brascamp--Lieb
		log-concavity theorem \cite{brascamplieb} imply that
		\(x\mapsto v_\varepsilon(x,0)\) is unimodal, with its maximum at
		\(q_\varepsilon\).  If, along a subsequence,
		\(q_\varepsilon\le1/2-\eta\), then this function is nonincreasing
		on \([1/2-\eta/2,1/2]\). Let $A$ and $B$ be the left and right halves
		of this interval. Then
		\[
		\int_A v_\varepsilon(x,0)\,dx
		\ge\int_B v_\varepsilon(x,0)\,dx.
		\]
		Passing to the limit by \eqref{eq:thin-axis-convergence} contradicts
		the strict increase of $\sin(\pi x)$ on this interval, since $A$ and $B$
		have equal length. The case
		\(q_\varepsilon\ge1/2+\eta\) is analogous.  Hence
		\(q_\varepsilon\to1/2\).
	\end{proof}
	
	The final step used log-concavity only to pass from \(L^2\) control on the axis to a pointwise statement.
	
	\begin{remark}
		
		One can instead obtain uniform convergence on the axis by interior
		elliptic estimates, without using log-concavity. Extend
		$v_\varepsilon$ oddly across $x=0,1$ in the fixed central strip
		$|Y|<1/2$, where its equation is preserved weakly, and extend
		$g_\varepsilon,r_\varepsilon$ in the same way.
		For $x_0\in[0,1]$, freeze $g_\varepsilon$ at $x_0$ and rescale
		$x=x_0+\varepsilon X$. By \eqref{eq:thin-eigenvalue-sandwich}, the rescaled equation for
		$v_\varepsilon-g_\varepsilon(x_0)\chi_\varepsilon$ has a uniformly
		bounded zeroth-order coefficient and an $O(\varepsilon^2)$ right-hand side.
		The interior $H^2$ estimate and Sobolev embedding, together with
		$|g_\varepsilon(x)-g_\varepsilon(x_0)|\le C|x-x_0|^{1/2}$, give
		\[
		\sup_{x_0\in[0,1]}|r_\varepsilon(x_0,0)|
		\le C\varepsilon^{-1/2}\|r_\varepsilon\|_{L^2(R_\varepsilon^+)}
		+C\varepsilon^{1/2}+C\varepsilon^2\longrightarrow0.
		\]
		Thus \eqref{eq:thin-first-mode} implies
		$v_\varepsilon(\cdot,0)\to\sqrt2\sin(\pi x)$ uniformly on $[0,1]$.
		The strict maximum of the limit at $1/2$ then gives
		$q_\varepsilon\to1/2$ directly.
		
	\end{remark}
	
	\begingroup
	The sharpness construction is also motivated by the narrow-domain
	analysis of Li--Xie--Yang--Yao
	\cite[Theorem~1.1]{lixieyangyao2026}. For their class of convex narrow
	domains obtained by vertically compressing fixed boundary profiles,
	they prove that the
	horizontal coordinates of boundary fail points approach the set of
	maximizers of the vertical width. This suggests placing the widest
	cross section near an endpoint to influence the interior torsion
	maximum. In the construction below, that cross section is at
	\(x=\varepsilon^{1/3}\to0\), while the width perturbation is small
	enough to keep the ground-state hot spot near \(1/2\).
	Here the width profile itself depends on \(\varepsilon\) and its
	contrast tends to zero, so the earlier fail-point theorem does not
	directly imply the required interior localization. The proof below
	adapts the idea of comparison with a local strip profile, using a
	supersolution and inscribed rectangles to resolve the much smaller
	variation caused by the bump. We now prove that the diameter bound
	in Proposition~\ref{prop:upperbound} is sharp. Figure~\ref{fig:thin} shows the
	geometry of the construction.
	
	\begin{figure}[htbp]
		\centering
		\begin{tikzpicture}[xscale=9.2,yscale=38]
			\fill[black!7] (0,0.030) -- (0.12,0.057) -- (1,0.030) -- (1,-0.030)
			-- (0.12,-0.057) -- (0,-0.030) -- cycle;
			\draw[thick] (0,0.030) -- (0.12,0.057) -- (1,0.030);
			\draw[thick] (0,-0.030) -- (0.12,-0.057) -- (1,-0.030);
			\draw[thick] (0,-0.030) -- (0,0.030);
			\draw[thick] (1,-0.030) -- (1,0.030);
			\draw[dashed,black!55] (0.06,-0.082) -- (0.06,0.082);
			\draw[dashed,black!55] (0.56,-0.082) -- (0.56,0.082);
			\node[circle,fill,inner sep=0.6pt] at (0.12,0) {};
			\node[circle,fill,inner sep=0.6pt] at (0.50,0) {};
			\node[right=4pt] at (0.12,0) {\scriptsize $P_\Omega\to0$};
			\node[right=4pt] at (0.50,0) {\scriptsize $Q_\Omega\to\frac12$};
			\node at (0.12,0.100) {\scriptsize peak at $x=\varepsilon^{1/3}$};
			\node[below] at (0.31,-0.098) {\scriptsize heart window};
		\end{tikzpicture}
		\caption{A thin symmetric convex domain $\Omega_\varepsilon$: the half-width is
			$\varepsilon f_\varepsilon(x)$ with $f_\varepsilon=1+\varepsilon^{2+\sigma}\tau_\varepsilon$
			and $\tau_\varepsilon$ the tent function of the proof, which peaks at $x=\varepsilon^{1/3}$
			and vanishes at both ends; the piecewise linear profile is therefore the one used in the
			proof, while both the vertical scale and the height of the bump are exaggerated. The
			dashed lines bound the heart window \eqref{eq:heart-window}. The torsion hot spot follows
			the bump toward the left end, while the ground-state hot spot stays near the midpoint.}
		\label{fig:thin}
	\end{figure}
	
	\par\endgroup
	\begin{theorem}\label{thm:convex-lower-half}
		One has
		\[
		M_{\rm sym}\ge\frac12.
		\]
		More precisely, for every fixed \(\sigma>0\), there exists a family
		of symmetric convex planar domains whose relative width perturbation has size
		\(\varepsilon^{2+\sigma}\) and for which
		\[
		\frac{|P_\Omega-Q_\Omega|}
		{\operatorname{diam}(\Omega)}
		\longrightarrow\frac12.
		\]
	\end{theorem}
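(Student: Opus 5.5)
We take $\Omega_\varepsilon=\{(x,y):0<x<1,\ |y|<\varepsilon f_\varepsilon(x)\}$ with $f_\varepsilon=1+\varepsilon^{2+\sigma}\tau_\varepsilon$. Here $\tau_\varepsilon$ is the tent function that vanishes at $x=0$ and $x=1$, equals $1$ at $x_\varepsilon:=\varepsilon^{1/3}$, and is linear on $[0,x_\varepsilon]$ and on $[x_\varepsilon,1]$. The half-width $\varepsilon f_\varepsilon$ is concave, so $\Omega_\varepsilon$ is convex and symmetric about the $x$-axis. Since $\delta_\varepsilon=\varepsilon^{2+\sigma}=o(\varepsilon^2)$, Lemma~\ref{lem:near-rectangle-eigen} applies directly and gives $Q_{\Omega_\varepsilon}\to(1/2,0)$. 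We also have $\operatorname{diam}(\Omega_\varepsilon)\to1$. It therefore suffices to show that $P_{\Omega_\varepsilon}=(p_\varepsilon,0)\to(0,0)$, which would make the ratio tend to $1/2$.

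For the torsion point we compare values on the axis. First we need a lower bound near the peak. The domain contains the rectangle of half-width $\varepsilon(1+\varepsilon^{2+\sigma}(1-\eta))$ over an $x$-interval of length comparable to $\eta\varepsilon^{1/3}$ around $x_\varepsilon$, on which $\tau_\varepsilon\ge1-\eta$. Lemma~\ref{lem:long-rectangle} and the maximum principle then give
\[
u(x_\varepsilon,0)\ge \tfrac{\varepsilon^2}{2}\bigl(1+2\varepsilon^{2+\sigma}(1-\eta)\bigr)-C\varepsilon^2e^{-c\eta\varepsilon^{-2/3}}.
\]
The exponential error is negligible against $\varepsilon^{4+\sigma}$.

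Next we need an upper bound at a point $x_0\ge\kappa$, with $\kappa>0$ fixed. Because $\tau_\varepsilon$ decreases linearly on $[x_\varepsilon,1]$, we have $\tau_\varepsilon(x_0)\le1-\kappa/2$ for small $\varepsilon$. We build a local supersolution of the form
\[
W(x,y)=\tfrac12\bigl(s(x)^2-y^2\bigr)+A\,\varepsilon^{2}\cdots,
\]
where $s$ is a slightly enlarged half-width profile that is linear or quadratic near $x_0$. A simpler choice is the strip profile $\tfrac12(\bar h^2-y^2)$ with $\bar h=\max_{|x-x_0|<\varepsilon^{1/2}}\varepsilon f_\varepsilon$, corrected by a harmonic term. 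The correction must control the contribution coming from the parts of $\Omega_\varepsilon$ at distance at least $\varepsilon^{1/2}$ from $x_0$. We handle that contribution with the decay mechanism of Lemma~\ref{lem:long-rectangle}: on $\Omega_\varepsilon\cap\{|x-x_0|<\varepsilon^{1/2}\}$, the function $u-\tfrac12(\bar h^2-y^2)$ is subharmonic. On the vertical cuts it is bounded by $C\varepsilon^2$, and it is at most $0$ on the curved sides. Comparing with the exponentially decaying harmonic function $\varepsilon^2\cosh(\pi x/(4\varepsilon))/\cosh(\pi\varepsilon^{1/2}/(4\varepsilon))\cos(\pi y/(4\varepsilon))$, suitably dilated, gives
\[
u(x_0,0)\le\tfrac{\varepsilon^2}{2}\bigl(1+2\varepsilon^{2+\sigma}(1-\kappa/2+o(1))\bigr)+C\varepsilon^2e^{-c\varepsilon^{-1/2}}.
\]
Choosing $\eta<\kappa/2$ yields $u(x_\varepsilon,0)>u(x_0,0)$ for all small $\varepsilon$, uniformly in $x_0\in[\kappa,1)$. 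Hence $p_\varepsilon<\kappa$. Since $\kappa$ is arbitrary, $p_\varepsilon\to0$. By Theorem~\ref{thm:main}'s symmetric setting, the torsion hot spot lies on the axis, so this settles its location.

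The main obstacle is the upper bound. The competing values differ only at relative order $\varepsilon^{2+\sigma}$, so every error term must be $o(\varepsilon^{4+\sigma})$ in absolute size. This is why the comparison has to be localized with exponentially small tails rather than polynomial ones. The supersolution must also absorb the slope of the boundary, which is of size $\varepsilon^{3+\sigma}$. Its curvature contribution is $O(\varepsilon^{3+\sigma}\cdot\varepsilon)$, which is $o(\varepsilon^{4+\sigma})$ only up to constants. So we would verify carefully that the enlarged constant-width strip, of width $\bar h$ with $\bar h/\varepsilon-1\le\varepsilon^{2+\sigma}(1-\kappa/2+\varepsilon^{1/2}\cdot O(1))$, really contains the local piece. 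That inclusion makes the comparison purely harmonic and avoids the curvature issue.
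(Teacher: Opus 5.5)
Your proof is correct, and it reaches $p_\varepsilon\to0$ by a different mechanism from the paper's.

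\emph{The paper's route.} It uses a single global barrier $\Psi_\varepsilon=(\varepsilon^2f_\varepsilon^2-y^2)/\bigl(2(1-\varepsilon^{16/3+2\sigma})\bigr)$. This is a distributional supersolution because the kink of the concave tent gives a negative jump of $\partial_x\Psi_\varepsilon$ and $\varepsilon^2(f_\varepsilon')^2\le\varepsilon^{16/3+2\sigma}$. It bounds $u_\varepsilon(x,0)$ at every $x$ at once, and this upper bound is compared with a fixed-length inscribed rectangle over $[\eta/4,3\eta/4]$.

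\emph{Your route.} Your ``simpler choice'' is a local comparison, and it works as follows. The function $w=u-\tfrac12(\bar h^2-y^2)$ is actually harmonic on $G=\Omega_\varepsilon\cap\{|x-x_0|<\varepsilon^{1/2}\}$. It satisfies $w\le0$ on $\partial\Omega_\varepsilon\cap\partial G$, because $\bar h$ is by definition the largest half-width over the window. On the two cuts, $w\le u\le\varepsilon^2$. The barrier $2\varepsilon^2\cosh\bigl(\pi(x-x_0)/(4\varepsilon)\bigr)\cos\bigl(\pi y/(4\varepsilon)\bigr)/\cosh\bigl(\pi\varepsilon^{-1/2}/4\bigr)$ then gives $u(x_0,0)\le\tfrac12\bar h^2+4\varepsilon^2e^{-\pi\varepsilon^{-1/2}/4}$. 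Here $\bar h=\varepsilon f_\varepsilon(x_0-\varepsilon^{1/2})$ and $\tau_\varepsilon(x_0-\varepsilon^{1/2})\le1-\kappa/2$ for $x_0\ge\kappa$ and small $\varepsilon$. Your peak-centred lower bound, using a rectangle of length $2\eta\varepsilon^{1/3}$ on which $\tau_\varepsilon\ge1-\eta$, is also fine.

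\emph{Comparison.} Your route uses neither the concavity of $f_\varepsilon$ nor any bound on $f_\varepsilon'$. It would therefore survive non-concave width profiles, and it does not need the restriction $\gamma<2+\sigma/2$ in the subsequent remark. The paper's barrier, in exchange, is one explicit pointwise inequality with only polynomial loss. It reuses that inequality in Proposition~\ref{prop:tent-hotspot-rates} to obtain rates.

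\emph{Two small corrections.}
\begin{enumerate}
\item The fact that $P_{\Omega_\varepsilon}$ lies on the axis should come from uniqueness of the torsion maximum in convex domains plus reflection symmetry, or from moving planes. It should not be cited from Theorem~\ref{thm:main}, whose proof uses the statement you are proving.
\item The concern in your last paragraph about boundary slope and curvature is moot. The local piece lies in the strip of half-width $\bar h$ by the definition of $\bar h$, so no supersolution correction is needed.
\end{enumerate}
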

	
	\begin{proof}
		Fix \(\sigma>0\).
		Define
		\[
		\tau_\varepsilon(x)=
		\begin{cases}
			\dfrac{x}{\varepsilon^{1/3}}, & 0\le x\le \varepsilon^{1/3},\\[2mm]
			\dfrac{1-x}{1-\varepsilon^{1/3}}, & \varepsilon^{1/3}\le x\le 1.
		\end{cases}
		\]
		Set
		\[
		f_\varepsilon(x)=1+\varepsilon^{2+\sigma}\tau_\varepsilon(x),
		\qquad
		\Omega_\varepsilon=
		\{(x,y):0<x<1,\ |y|<\varepsilon f_\varepsilon(x)\}.
		\]
		The concavity of $f_\varepsilon$ makes $\Omega_\varepsilon$ convex.
		\begingroup
		
		For this family, \eqref{eq:heart-window} reads
		\[
		\mathcal H(\overline{\Omega_\varepsilon})
		\subset
		\left[\frac{\varepsilon^{1/3}}2,
		\frac{1+\varepsilon^{1/3}}2\right]\times\{0\}.
		\]
		We will show that the two equations select points approaching the
		opposite endpoints of this common enclosure.
		\par\endgroup
		By the vertical moving-plane argument, both maxima lie on the $x$-axis. We write
		\[
		P_{\Omega_\varepsilon}=(p_\varepsilon,0),
		\qquad
		Q_{\Omega_\varepsilon}=(q_\varepsilon,0).
		\]
		Since $\varepsilon^{2+\sigma}=o(\varepsilon^2)$,
		Lemma~\ref{lem:near-rectangle-eigen} gives 
		\[
		q_\varepsilon\to\frac12.
		\]
		
		We prove that $p_\varepsilon\to0$ by showing that, for every fixed
		$0<\eta<1$, the maximum cannot occur at $x\ge\eta$ when
		$\varepsilon$ is sufficiently small. 
		\begingroup
		The comparison is based on the infinite-strip value
		\(\varepsilon^2f_\varepsilon(x)^2/2\). Its variation generated by
		the bump is of order \(\varepsilon^{4+\sigma}\); the estimates
		below keep both the barrier correction and the effect of the
		vertical ends smaller than this scale.
		\par\endgroup
		
		Let $u_\varepsilon$ be the torsion function in $\Omega_\varepsilon$
		and set
		\[
		\Psi_\varepsilon(x,y)
		=\frac{\varepsilon^2f_\varepsilon(x)^2-y^2}
		{2(1-\varepsilon^{16/3+2\sigma})}.
		\]
		For sufficiently small $\varepsilon$, one has
		$\|f_\varepsilon'\|_\infty\le\varepsilon^{5/3+\sigma}$ and
		$1-\varepsilon^{16/3+2\sigma}>0$.
		Since $f_\varepsilon$ is continuous and piecewise affine,
		$\Psi_\varepsilon\in W^{1,\infty}(\Omega_\varepsilon)$ and is smooth
		on each side of the line $x=\varepsilon^{1/3}$. Away from this line,
		\[
		-\Delta\Psi_\varepsilon
		=\frac{1-\varepsilon^2(f_\varepsilon')^2}
		{1-\varepsilon^{16/3+2\sigma}}
		\ge1,
		\]
		because $\varepsilon^2(f_\varepsilon')^2
		\le\varepsilon^{16/3+2\sigma}$.
		At the interface,
		\[
		f_\varepsilon'(\varepsilon^{1/3}-)
		=\varepsilon^{5/3+\sigma},
		\qquad
		f_\varepsilon'(\varepsilon^{1/3}+)
		=-\frac{\varepsilon^{2+\sigma}}{1-\varepsilon^{1/3}}.
		\]
		Consequently, the jump of $\partial_x\Psi_\varepsilon$, defined as
		the right trace minus the left trace, is
		\[
		[\partial_x\Psi_\varepsilon]
		=-\frac{\varepsilon^{4+\sigma}
			f_\varepsilon(\varepsilon^{1/3})}
		{1-\varepsilon^{16/3+2\sigma}}
		\left(\varepsilon^{-1/3}
		+\frac{1}{1-\varepsilon^{1/3}}\right)<0.
		\]
		For any nonnegative $\zeta\in C_c^\infty(\Omega_\varepsilon)$,
		integration by parts separately on the two sides of the interface
		therefore gives
		\[
		\begin{aligned}
			\int_{\Omega_\varepsilon}
			\nabla\Psi_\varepsilon\cdot\nabla\zeta\,dx\,dy
			&=\int_{\Omega_\varepsilon}
			\frac{1-\varepsilon^2(f_\varepsilon')^2}
			{1-\varepsilon^{16/3+2\sigma}}\,
			\zeta\,dx\,dy\\
			&\quad-\int_{\Omega_\varepsilon\cap\{x=\varepsilon^{1/3}\}}
			[\partial_x\Psi_\varepsilon]\zeta\,dy\\
			&\ge\int_{\Omega_\varepsilon}\zeta\,dx\,dy.
		\end{aligned}
		\]
		Therefore, $-\Delta\Psi_\varepsilon\ge1$ in the distributional sense.
		
		Since
		$\Psi_\varepsilon\ge0$ on $\partial\Omega_\varepsilon$, by the maximum principle, we have $u_\varepsilon\le\Psi_\varepsilon$, and thus
		\begin{equation}\label{eq:torsion-upper-axis}
			u_\varepsilon(x,0)
			\le\frac{\varepsilon^2f_\varepsilon(x)^2}
			{2(1-\varepsilon^{16/3+2\sigma})}.
		\end{equation}
		Fix $0<\eta<1$ and let $\varepsilon\to0$ with $\eta$ fixed.
		For sufficiently small $\varepsilon$, we have
		$\varepsilon^{1/3}<\frac{\eta}{4}$. For $x\ge\eta$, we have
		$\tau_\varepsilon(x)\le\frac{1-\eta}{1-\varepsilon^{1/3}}
		=1-\eta+o(1)$.
		Since $0\le\tau_\varepsilon\le1$, one has, uniformly in $x$,
		\[
		f_\varepsilon(x)^2
		=1+2\varepsilon^{2+\sigma}\tau_\varepsilon(x)
		+O(\varepsilon^{4+2\sigma}).
		\]
		Moreover,
		\[
		\frac{1}{1-\varepsilon^{16/3+2\sigma}}
		=1+O(\varepsilon^{16/3+2\sigma})
		=1+o(\varepsilon^{2+\sigma}),
		\]
		because $\varepsilon^{16/3+2\sigma}/\varepsilon^{2+\sigma}
		=\varepsilon^{10/3+\sigma}\to0$.
		Substituting these expansions into \eqref{eq:torsion-upper-axis}
		gives
		\begin{equation}\label{eq:upper-away-left-new}
			\sup_{\eta\le x<1}u_\varepsilon(x,0)
			\le\frac{\varepsilon^2}{2}
			+\varepsilon^{4+\sigma}(1-\eta+o(1)).
		\end{equation}
		
		To obtain a lower bound at a point in $(0,\eta)$, observe that
		$f_\varepsilon$ is decreasing on
		$[\frac{\eta}{4},\frac{3\eta}{4}]$, so
		\[
		R_\varepsilon
		=(\frac{\eta}{4},\frac{3\eta}{4})\times
		\left(-\varepsilon f_\varepsilon(\frac{3\eta}{4}),
		\varepsilon f_\varepsilon(\frac{3\eta}{4})\right)
		\subset\Omega_\varepsilon.
		\]
		Domain monotonicity and Lemma~\ref{lem:long-rectangle}, applied
		at the centre of $R_\varepsilon$, give
		\[
		u_\varepsilon(\frac{\eta}{2},0)
		\ge\frac{\varepsilon^2f_\varepsilon(\frac{3\eta}{4})^2}{2}
		\left(1-C\exp\left\{
		-\frac{c\eta}{2\varepsilon f_\varepsilon(\frac{3\eta}{4})}
		\right\}\right).
		\]
		Since $1\le f_\varepsilon\le1+\varepsilon^{2+\sigma}\le2$
		for small $\varepsilon$, the exponential factor satisfies
		\[
		\exp\left\{-\frac{c\eta}
		{2\varepsilon f_\varepsilon(\frac{3\eta}{4})}\right\}
		\le e^{-c\eta/(4\varepsilon)}
		=o(\varepsilon^{2+\sigma}).
		\]
		Thus \[
		u_\varepsilon(\frac{\eta}{2},0)
		\ge\frac{\varepsilon^2f_\varepsilon(\frac{3\eta}{4})^2}{2}-o(\epsilon^{4+\sigma})
		\]
		Also,
		\[
		f_\varepsilon(\frac{3\eta}{4})
		=
		1+\varepsilon^{2+\sigma}
		\frac{1-\frac{3\eta}{4}}{1-\varepsilon^{1/3}},
		\]
		so
		\[
		\frac{\varepsilon^2f_\varepsilon(\frac{3\eta}{4})^2}{2}
		=\frac{\varepsilon^2}{2}
		+\varepsilon^{4+\sigma}
		\frac{1-\frac{3\eta}{4}}{1-\varepsilon^{1/3}}
		+O(\varepsilon^{6+2\sigma}).
		\]
		Since $\varepsilon^{1/3}\to0$, the preceding lower bound becomes
		\begin{equation}\label{eq:lower-near-left-new}
			u_\varepsilon(\frac{\eta}{2},0)
			\ge\frac{\varepsilon^2}{2}
			+\varepsilon^{4+\sigma}
			\left(1-\frac{3\eta}{4}+o(1)\right).
		\end{equation}
		Comparing \eqref{eq:upper-away-left-new} and
		\eqref{eq:lower-near-left-new} yields
		\[
		u_\varepsilon(\frac{\eta}{2},0)
		-\sup_{\eta\le x<1}u_\varepsilon(x,0)
		\ge\varepsilon^{4+\sigma}
		\left(\frac{\eta}{4}+o(1)\right)>0
		\]
		for all sufficiently small $\varepsilon$. Thus $p_\varepsilon<\eta$.
		Since $\eta$ was arbitrary, $p_\varepsilon\to0$.
		
		Together with $q_\varepsilon\to\frac{1}{2}$ and
		$\operatorname{diam}(\Omega_\varepsilon)=1+O(\varepsilon^2)$,
		this gives
		\[
		\frac{|P_{\Omega_\varepsilon}-Q_{\Omega_\varepsilon}|}
		{\operatorname{diam}(\Omega_\varepsilon)}
		=\frac{|p_\varepsilon-q_\varepsilon|}
		{\operatorname{diam}(\Omega_\varepsilon)}
		\longrightarrow\frac{1}{2}.
		\]
		
	\end{proof}
	
	The construction is robust: only the separation of scales matters, not the particular exponent.
	
	\begin{remark}
		Replace $\varepsilon^{1/3}$ by
		$\varepsilon^\gamma$ in the definition of $\tau_\varepsilon$, where
		$0<\gamma<2+\sigma/2$. For small $\varepsilon$,
		\[
		\varepsilon^2\|f_\varepsilon'\|_\infty^2
		=\varepsilon^{6+2\sigma-2\gamma}
		=o(\varepsilon^{2+\sigma}),
		\]
		because $4+\sigma-2\gamma>0$. The same barrier argument applies with
		$1-\varepsilon^{6+2\sigma-2\gamma}$ in place of
		$1-\varepsilon^{16/3+2\sigma}$ in its denominator. The correction
		remains smaller than the relative width perturbation, so the preceding
		comparison still gives $p_\varepsilon\to0$. The eigenfunction lemma
		also applies, since $\|f_\varepsilon-1\|_\infty
		=\varepsilon^{2+\sigma}=o(\varepsilon^2)$.
		
		The size $\varepsilon^{2+\sigma}$ of that perturbation has a different
		role for the ground state. The first transverse eigenvalue satisfies
		\[
		\frac{\pi^2}{4\varepsilon^2f_\varepsilon(x)^2}
		=\frac{\pi^2}{4\varepsilon^2}
		-\frac{\pi^2}{2}\varepsilon^\sigma\tau_\varepsilon(x)
		+O(\varepsilon^{2+2\sigma}),
		\]
		uniformly in $x$. Its variation is therefore negligible compared
		with the order-one longitudinal energy, which leads to the limiting
		axial profile $\sin(\pi x)$.
	\end{remark}
	For the tent profile of the preceding theorem, the convergence rates of
	$p_\varepsilon$ and $q_\varepsilon$ can be made explicit.

	\begin{proposition}
		\label{prop:tent-hotspot-rates}
		Fix $\sigma>0$, and consider the domains
		\[
		\Omega_\varepsilon
		=\{(x,y):0<x<1,\ |y|<\varepsilon f_\varepsilon(x)\},
		\qquad
		f_\varepsilon=1+\varepsilon^{2+\sigma}\tau_\varepsilon,
		\]
		where
		\[
		\tau_\varepsilon(x)=
		\begin{cases}
			\dfrac{x}{\varepsilon^{1/3}},&0\le x\le\varepsilon^{1/3},\\[2mm]
			\dfrac{1-x}{1-\varepsilon^{1/3}},&\varepsilon^{1/3}\le x\le1.
		\end{cases}
		\]
		Let $u_\varepsilon$ be the torsion function, and denote the torsion
		and ground-state hot spots by
		\[
		P_{\Omega_\varepsilon}=(p_\varepsilon,0),
		\qquad Q_{\Omega_\varepsilon}=(q_\varepsilon,0).
		\]
		There exist $C_\sigma>0$ and $\varepsilon_\sigma\in(0,1/2)$ such that,
		for $0<\varepsilon<\varepsilon_\sigma$,
		\begin{equation}\label{eq:tent-quantitative-hotspots}
			|p_\varepsilon-\varepsilon^{1/3}|
			\le C_\sigma\varepsilon\log\frac1\varepsilon,
			\qquad
			|q_\varepsilon-1/2|\le C_\sigma\varepsilon^{\sigma/5}.
		\end{equation}
		In particular, $p_\varepsilon/\varepsilon^{1/3}\to1$.
	\end{proposition}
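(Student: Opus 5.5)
The plan is to prove the two estimates in \eqref{eq:tent-quantitative-hotspots} separately, each by making the corresponding qualitative argument of Theorem~\ref{thm:convex-lower-half} and Lemma~\ref{lem:near-rectangle-eigen} quantitative. Heuristically, on the axis \(u_\varepsilon(x,0)\) should equal \(\varepsilon^2f_\varepsilon(x)^2/2\) up to an error much smaller than the bump contribution \(\varepsilon^{4+\sigma}\tau_\varepsilon\). The maximum of \(\tau_\varepsilon\) is the kink at \(\varepsilon^{1/3}\). The slope there is \(\varepsilon^{-1/3}\) on the left and about \(1\) on the right. So a point at distance \(d\) to the right of the peak loses only about \(\varepsilon^{4+\sigma}d\), and the admissible error on the axis dictates the rate.

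\emph{Torsion.} The upper bound is already available from the barrier \(\Psi_\varepsilon\):
\[
u_\varepsilon(x,0)\le \tfrac{\varepsilon^2}{2}+\varepsilon^{4+\sigma}\tau_\varepsilon(x)+O(\varepsilon^{6+2\sigma}).
\]
The new ingredient is a sharp lower bound at \(x=\varepsilon^{1/3}\), of the form
\[
u_\varepsilon(\varepsilon^{1/3},0)\ge \tfrac{\varepsilon^2}{2}+\varepsilon^{4+\sigma}\bigl(1-C\varepsilon\log\tfrac1\varepsilon\bigr).
\]
An inscribed rectangle of half-length \(\ell=K\varepsilon\log\frac1\varepsilon\) around the peak, combined with Lemma~\ref{lem:long-rectangle}, makes the exponential end error \(\varepsilon^{cK}\le\varepsilon^{6+\sigma}\). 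It loses \(\varepsilon^{4+\sigma}\ell\) on the right side but \(\varepsilon^{4+\sigma}\ell\varepsilon^{-1/3}\) on the steep left side, which is too crude. I would therefore replace it by a subsolution. Take a trapezoid \(D\subset\Omega_\varepsilon\) over \(|x-\varepsilon^{1/3}|<\ell\), whose upper side is the tent itself, slightly lowered. On \(D\) use
\[
\Phi=\frac{\varepsilon^2 f_\varepsilon^2-y^2}{2}\,(1+\varepsilon^{16/3+2\sigma})-\varepsilon^2 w.
\]
The concave kink of \(f_\varepsilon\) produces a positive line measure in \(\Delta\Phi\) of total size \(\varepsilon^{4+\sigma}\varepsilon^{-1/3}\), concentrated on \(x=\varepsilon^{1/3}\). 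It must be absorbed by \(w\), chosen as a harmonic corrector: the Dirichlet solution in the strip section of the jump source. The transverse exponential decay of this corrector (as in the Remark after Lemma~\ref{lem:long-rectangle}) gives \(w(\varepsilon^{1/3},0)=O(\varepsilon^{2+\sigma}\varepsilon^{-1/3}\cdot\varepsilon)\). This is still relative size \(\varepsilon^{2/3}\), so this is the step I expect to be the main obstacle. What I would try first is exploiting that \(f_\varepsilon\) itself is only needed to order \(\varepsilon\log\frac1\varepsilon\) in \(x\): compare \(u_\varepsilon\) with the torsion function of the exact wedge-like region near the kink, whose axial value is computable by separation of variables, as in Section~\ref{sec:sector-torsion}. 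Granting the lower bound, the upper bound shows that any \(x\) with \(|x-\varepsilon^{1/3}|\ge C\varepsilon\log\frac1\varepsilon\) has \(\tau_\varepsilon(x)\le 1-C'\varepsilon\log\frac1\varepsilon\), which excludes it as a maximizer. Then \(p_\varepsilon/\varepsilon^{1/3}\to1\) follows at once.

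\emph{Ground state.} Rerun Lemma~\ref{lem:near-rectangle-eigen} with rates. Here \(\delta_\varepsilon=\varepsilon^{2+\sigma}\), so the eigenvalue sandwich gives an \(O(\varepsilon^\sigma)\) excess in place of \(o(1)\). Hence
\[
\|g_\varepsilon'\|^2-\pi^2\le C\varepsilon^\sigma,\qquad \|r_\varepsilon\|^2\le C\varepsilon^{2+\sigma}.
\]
The spectral gap of \(-d^2/dx^2\) on \(H^1_0(0,1)\) then yields \(\|g_\varepsilon-\sqrt2\sin\pi x\|_{L^2}\le C\varepsilon^{\sigma/2}\), and the trace argument gives the same bound for \(v_\varepsilon(\cdot,0)\). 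Finally quantify the unimodality step. If \(q_\varepsilon\le\frac12-\eta\), the two halves \(A,B\) of \([\frac12-\eta,\frac12]\) satisfy
\[
\int_B\sin\pi x-\int_A\sin\pi x\ge c\eta^3,
\]
while the \(L^2\) error contributes at most \(C\eta^{1/2}\varepsilon^{\sigma/2}\). This forces \(\eta\le C\varepsilon^{\sigma/5}\), which is exactly the stated exponent.
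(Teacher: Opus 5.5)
Your ground-state half follows the paper's Steps~2--3 and is fine. This covers the $O(\varepsilon^\sigma)$ eigenvalue excess, the spectral gap, the trace bound, and the $c\eta^3$ versus $\eta^{1/2}\varepsilon^{\sigma/2}$ comparison; note only that $c_\varepsilon=\int_0^1 g_\varepsilon e\ge0$ is what turns $1-c_\varepsilon^2\le C\varepsilon^\sigma$ into $|1-c_\varepsilon|\le C\varepsilon^\sigma$.

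The torsion half has a genuine gap. The intermediate estimate you aim for,
\[
u_\varepsilon(\varepsilon^{1/3},0)\ge \tfrac{\varepsilon^2}{2}+\varepsilon^{4+\sigma}\bigl(1-C\varepsilon\log\tfrac1\varepsilon\bigr),
\]
is false, so no refined subsolution or wedge comparison can supply it. To see this, set $\Psi_0=\frac12(\varepsilon^2f_\varepsilon^2-y^2)$. The difference $W=\Psi_0-u_\varepsilon$ is nonnegative on $\partial\Omega_\varepsilon$ and satisfies $-\Delta W=-\varepsilon^2(f_\varepsilon')^2+\mu$. Here $\mu$ is the line measure on $\{x=\varepsilon^{1/3}\}$ produced by the concave kink, with constant density $\varepsilon^{4+\sigma}f_\varepsilon(\varepsilon^{1/3})\bigl(\varepsilon^{-1/3}+(1-\varepsilon^{1/3})^{-1}\bigr)\ge\varepsilon^{4+\sigma}\varepsilon^{-1/3}$. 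Compare the Green function of $\Omega_\varepsilon$ with that of the inscribed square of side $2\varepsilon$ centred at $(\varepsilon^{1/3},0)$; by scale invariance, the potential of $\mu$ at that point is at least $\kappa\,\varepsilon\cdot\varepsilon^{4+\sigma}\varepsilon^{-1/3}$, with $\kappa>0$ absolute. The smooth source contributes only $O(\varepsilon^{22/3+2\sigma})$. Hence
\[
u_\varepsilon(\varepsilon^{1/3},0)\le\tfrac{\varepsilon^2}{2}+\varepsilon^{4+\sigma}\bigl(1-\kappa\varepsilon^{2/3}+O(\varepsilon^{2+\sigma})\bigr).
\]
So the relative $\varepsilon^{2/3}$ your harmonic corrector produced is the true deficit at the kink, not a loss of method. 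In particular, $p_\varepsilon\ne\varepsilon^{1/3}$ for small $\varepsilon$.

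The missing idea is that you never need $u_\varepsilon$ at the kink. You only need a lower bound for $\max u_\varepsilon=u_\varepsilon(p_\varepsilon,0)$, and that can be taken at a point on the gentle side of the peak. The paper places the inscribed rectangle entirely to the right of the peak:
\[
\bigl(\varepsilon^{1/3},\varepsilon^{1/3}+2\ell\bigr)\times\bigl(-\varepsilon f_\varepsilon(\varepsilon^{1/3}+2\ell),\,\varepsilon f_\varepsilon(\varepsilon^{1/3}+2\ell)\bigr)\subset\Omega_\varepsilon,
\qquad \ell=K\varepsilon\log\tfrac1\varepsilon .
\]
This uses only that $f_\varepsilon$ decreases to the right of $\varepsilon^{1/3}$. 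Lemma~\ref{lem:long-rectangle} at the centre $(\varepsilon^{1/3}+\ell,0)$ gives
\[
u_\varepsilon(\varepsilon^{1/3}+\ell,0)\ge\tfrac{\varepsilon^2}{2}f_\varepsilon(\varepsilon^{1/3}+2\ell)^2-C\varepsilon^{2+cK}.
\]
Since $f_\varepsilon(\varepsilon^{1/3}+2\ell)=1+\varepsilon^{2+\sigma}-2\varepsilon^{2+\sigma}\ell/(1-\varepsilon^{1/3})$, the height loss costs only $O(\varepsilon^{4+\sigma}\ell)$ rather than $\varepsilon^{4+\sigma}\ell\varepsilon^{-1/3}$. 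With $cK>3+\sigma$ this yields
\[
u_\varepsilon(p_\varepsilon,0)\ge\tfrac{\varepsilon^2}{2}(1+\varepsilon^{2+\sigma})^2-C_\sigma\varepsilon^{5+\sigma}\log\tfrac1\varepsilon .
\]
Now run your concluding step, which compares with the barrier bound at $x=p_\varepsilon$, using $u_\varepsilon(p_\varepsilon,0)$ in place of $u_\varepsilon(\varepsilon^{1/3},0)$. Together with $|x-\varepsilon^{1/3}|\le1-\tau_\varepsilon(x)$, it gives $|p_\varepsilon-\varepsilon^{1/3}|\le C_\sigma\varepsilon\log\frac1\varepsilon$.
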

	
	\begin{proof}
		All estimates below are for sufficiently small $\varepsilon$.
		Constants denoted by $C$ may change from line to line and may depend
		on the fixed parameter $\sigma$.
		
		\emph{Step 1: localization of the torsion hot spot.}
		The supersolution estimate established in the proof of
		Theorem~\ref{thm:convex-lower-half} gives
		\begin{equation}\label{eq:rates-torsion-upper}
			u_\varepsilon(x,0)
			\le\frac{\varepsilon^2 f_\varepsilon(x)^2}
			{2(1-\varepsilon^{16/3+2\sigma})}
			\le\frac{\varepsilon^2 f_\varepsilon(x)^2}{2}
			+C\varepsilon^{22/3+2\sigma},
		\end{equation}
		uniformly for $0<x<1$. The second inequality uses
		$1\le f_\varepsilon\le2$.
		
		Let
		\[
		\ell=K_\sigma\varepsilon\log\frac1\varepsilon,
		\]
		where $K_\sigma>0$ will be chosen below. The rectangle
		\[
		\begin{aligned}
			R_\varepsilon
			&=(\varepsilon^{1/3},\varepsilon^{1/3}+2\ell)\\
			&\quad\times
			\bigl(-\varepsilon f_\varepsilon(\varepsilon^{1/3}+2\ell),
			\varepsilon f_\varepsilon(\varepsilon^{1/3}+2\ell)\bigr)
		\end{aligned}
		\]
		is contained in $\Omega_\varepsilon$, since $f_\varepsilon$ is
		decreasing to the right of $\varepsilon^{1/3}$.
		Its centre is $(\varepsilon^{1/3}+\ell,0)$, and its half-width lies
		between $\varepsilon$ and $2\varepsilon$.
		By domain monotonicity and Lemma~\ref{lem:long-rectangle},
		\begin{equation}\label{eq:rates-rectangle-lower}
			\begin{aligned}
				u_\varepsilon(\varepsilon^{1/3}+\ell,0)
				&\ge\frac{\varepsilon^2
					f_\varepsilon(\varepsilon^{1/3}+2\ell)^2}{2}
				-C\varepsilon^2e^{-c\ell/\varepsilon},
			\end{aligned}
		\end{equation}
		with an absolute constant $c>0$.
		Choose $K_\sigma$ so that $cK_\sigma>3+\sigma$. Then
		\[
		\varepsilon^2e^{-c\ell/\varepsilon}
		=\varepsilon^{2+cK_\sigma}
		=o(\varepsilon^{5+\sigma}).
		\]
		On the other hand, the definition of the tent gives
		\[
		f_\varepsilon(\varepsilon^{1/3}+2\ell)
		=1+\varepsilon^{2+\sigma}
		-\frac{2\varepsilon^{2+\sigma}\ell}{1-\varepsilon^{1/3}}.
		\]
		Consequently,
		\[
		\begin{aligned}
			0&\le
			\frac{\varepsilon^2(1+\varepsilon^{2+\sigma})^2}{2}
			-\frac{\varepsilon^2
				f_\varepsilon(\varepsilon^{1/3}+2\ell)^2}{2}\\
			&\le C\varepsilon^{4+\sigma}\ell
			\le C_\sigma\varepsilon^{5+\sigma}\log\frac1\varepsilon.
		\end{aligned}
		\]
		Combining this with \eqref{eq:rates-rectangle-lower} yields
		\begin{equation}\label{eq:rates-torsion-max-lower}
			u_\varepsilon(p_\varepsilon,0)
			\ge\frac{\varepsilon^2(1+\varepsilon^{2+\sigma})^2}{2}
			-C_\sigma\varepsilon^{5+\sigma}\log\frac1\varepsilon.
		\end{equation}
		Since
		\[
		\frac{\varepsilon^{22/3+2\sigma}}
		{\varepsilon^{5+\sigma}\log(1/\varepsilon)}
		=\frac{\varepsilon^{7/3+\sigma}}{\log(1/\varepsilon)}\longrightarrow0,
		\]
		comparison with \eqref{eq:rates-torsion-upper} at $x=p_\varepsilon$
		gives
		\[
		\frac{\varepsilon^2}{2}
		\left((1+\varepsilon^{2+\sigma})^2
		-f_\varepsilon(p_\varepsilon)^2\right)
		\le C_\sigma\varepsilon^{5+\sigma}\log\frac1\varepsilon.
		\]
		The expression on the left equals
		\[
		\varepsilon^{4+\sigma}
		\bigl(1-\tau_\varepsilon(p_\varepsilon)\bigr)
		\left(1+\frac{\varepsilon^{2+\sigma}}2
		\bigl(1+\tau_\varepsilon(p_\varepsilon)\bigr)\right).
		\]
		The last factor is at least $1$, and hence
		\[
		1-\tau_\varepsilon(p_\varepsilon)
		\le C_\sigma\varepsilon\log\frac1\varepsilon.
		\]
		Finally,
		\[
		1-\tau_\varepsilon(x)=
		\begin{cases}
			\dfrac{\varepsilon^{1/3}-x}{\varepsilon^{1/3}},
			&0\le x\le\varepsilon^{1/3},\\[2mm]
			\dfrac{x-\varepsilon^{1/3}}{1-\varepsilon^{1/3}},
			&\varepsilon^{1/3}\le x\le1,
		\end{cases}
		\]
		so $|x-\varepsilon^{1/3}|\le1-\tau_\varepsilon(x)$.
		This proves the first estimate in \eqref{eq:tent-quantitative-hotspots}.
		It also gives
		\[
		\left|\frac{p_\varepsilon}{\varepsilon^{1/3}}-1\right|
		\le C_\sigma\varepsilon^{2/3}\log\frac1\varepsilon\longrightarrow0.
		\]
		
		\emph{Step 2: a quantitative approximation of the ground-state profile.} The scheme is that of
		Lemma~\ref{lem:near-rectangle-eigen}; we repeat it because the rates, not only the limits, are
		needed here.
		Let $\phi_\varepsilon$ be the positive first Dirichlet eigenfunction
		in $\Omega_\varepsilon$, with eigenvalue $\lambda_\varepsilon$.
		Normalize the rescaled function
		\[
		v_\varepsilon(x,Y)=\phi_\varepsilon(x,\varepsilon Y),
		\qquad
		D_\varepsilon=\{(x,Y):0<x<1,\ |Y|<f_\varepsilon(x)\},
		\]
		so that $\int_{D_\varepsilon}v_\varepsilon^2=1$, and extend it by zero to
		\[
		R_\varepsilon^+
		=(0,1)\times(-1-\varepsilon^{2+\sigma},1+\varepsilon^{2+\sigma}).
		\]
		Since $(0,1)\times(-1,1)\subset D_\varepsilon\subset R_\varepsilon^+$,
		domain monotonicity gives
		\[
		\pi^2+\frac{\pi^2}{4\varepsilon^2(1+\varepsilon^{2+\sigma})^2}
		\le\lambda_\varepsilon
		\le\pi^2+\frac{\pi^2}{4\varepsilon^2}.
		\]
		Write
		\[
		\mu_\varepsilon
		=\frac{\pi^2}{4(1+\varepsilon^{2+\sigma})^2}.
		\]
		The difference between the two rectangle eigenvalues satisfies
		\[
		\frac{\pi^2}{4\varepsilon^2}
		\left(1-\frac{1}{(1+\varepsilon^{2+\sigma})^2}\right)
		=\frac{\pi^2}{4}\varepsilon^\sigma
		\frac{2+\varepsilon^{2+\sigma}}{(1+\varepsilon^{2+\sigma})^2}
		\le C\varepsilon^\sigma.
		\]
		Hence
		\begin{equation}\label{eq:rates-eigenvalue-excess}
			0\le\lambda_\varepsilon-\pi^2
			-\varepsilon^{-2}\mu_\varepsilon
			\le C\varepsilon^\sigma.
		\end{equation}
		The zero extension belongs to $H_0^1(R_\varepsilon^+)$ and satisfies
		\[
		\lambda_\varepsilon
		=\int_{R_\varepsilon^+}|\partial_xv_\varepsilon|^2
		+\varepsilon^{-2}\int_{R_\varepsilon^+}
		|\partial_Yv_\varepsilon|^2.
		\]
		By the one-dimensional Poincar\'e inequalities and normalization,
		the two energy excesses are nonnegative. Thus
		\eqref{eq:rates-eigenvalue-excess} implies
		\begin{equation}\label{eq:rates-directional-excesses}
			\begin{aligned}
				0&\le\int_{R_\varepsilon^+}|\partial_xv_\varepsilon|^2-\pi^2
				\le C\varepsilon^\sigma,\\
				0&\le\int_{R_\varepsilon^+}|\partial_Yv_\varepsilon|^2
				-\mu_\varepsilon
				\le C\varepsilon^{2+\sigma}.
			\end{aligned}
		\end{equation}
		
		Let
		\[
		\chi_\varepsilon(Y)
		=(1+\varepsilon^{2+\sigma})^{-1/2}
		\cos\frac{\pi Y}{2(1+\varepsilon^{2+\sigma})}
		\]
		be the normalized first vertical eigenfunction, and set
		\[
		g_\varepsilon(x)
		=\int_{-1-\varepsilon^{2+\sigma}}^{1+\varepsilon^{2+\sigma}}
		v_\varepsilon(x,Y)\chi_\varepsilon(Y)\,dY,
		\qquad
		r_\varepsilon=v_\varepsilon-g_\varepsilon\chi_\varepsilon.
		\]
		Then $g_\varepsilon\in H_0^1(0,1)$, $g_\varepsilon\ge0$,
		$r_\varepsilon\in H_0^1(R_\varepsilon^+)$, and
		$r_\varepsilon(x,\cdot)$ is orthogonal to $\chi_\varepsilon$.
		In the following formulas, norms of functions of $x$ alone are taken
		on $(0,1)$ and norms of functions of $(x,Y)$ on $R_\varepsilon^+$.
		Orthogonality, and integration by parts using
		$-\chi_\varepsilon''=\mu_\varepsilon\chi_\varepsilon$, give
		\[
		\|g_\varepsilon\|_2^2+\|r_\varepsilon\|_2^2=1,
		\qquad
		\int_{R_\varepsilon^+}|\partial_Yv_\varepsilon|^2
		=\mu_\varepsilon\|g_\varepsilon\|_2^2
		+\|\partial_Yr_\varepsilon\|_2^2.
		\]
		The second vertical eigenvalue is $4\mu_\varepsilon$. Therefore
		\[
		3\mu_\varepsilon\|r_\varepsilon\|_2^2
		\le\int_{R_\varepsilon^+}|\partial_Yv_\varepsilon|^2
		-\mu_\varepsilon
		\le C\varepsilon^{2+\sigma}.
		\]
		Since $\mu_\varepsilon$ stays bounded above and away from zero, the
		preceding identities also give
		\begin{equation}\label{eq:rates-vertical-remainder}
			\begin{aligned}
				\|r_\varepsilon\|_2^2&\le C\varepsilon^{2+\sigma},\\
				\|\partial_Yr_\varepsilon\|_2^2
				&=\left(\int_{R_\varepsilon^+}|\partial_Yv_\varepsilon|^2
				-\mu_\varepsilon\right)
				+\mu_\varepsilon\|r_\varepsilon\|_2^2
				\le C\varepsilon^{2+\sigma}.
			\end{aligned}
		\end{equation}
		
		Moreover, differentiating the projection in $x$ and applying
		Cauchy--Schwarz yields
		\[
		\|g_\varepsilon'\|_2^2
		\le\int_{R_\varepsilon^+}|\partial_xv_\varepsilon|^2
		\le\pi^2+C\varepsilon^\sigma.
		\]
		Using $\|g_\varepsilon\|_2^2=1-\|r_\varepsilon\|_2^2$, we obtain
		\begin{equation}\label{eq:rates-horizontal-defect}
			0\le\|g_\varepsilon'\|_2^2
			-\pi^2\|g_\varepsilon\|_2^2\le C\varepsilon^\sigma.
		\end{equation}
		To quantify the corresponding near-equality case, write
		\[
		e(x)=\sqrt2\sin(\pi x),
		\qquad
		g_\varepsilon=c_\varepsilon e+w_\varepsilon,
		\qquad
		\int_0^1w_\varepsilon e\,dx=0.
		\]
		Here $c_\varepsilon=\int_0^1g_\varepsilon e\,dx\ge0$.
		Since the second Dirichlet eigenvalue of $(0,1)$ is $4\pi^2$,
		\[
		\begin{aligned}
			\|g_\varepsilon'\|_2^2-\pi^2\|g_\varepsilon\|_2^2
			&=\|w_\varepsilon'\|_2^2-\pi^2\|w_\varepsilon\|_2^2\\
			&\ge\frac34\|w_\varepsilon'\|_2^2.
		\end{aligned}
		\]
		Thus $\|w_\varepsilon\|_{H_0^1(0,1)}\le C\varepsilon^{\sigma/2}$.
		Normalization gives
		\[
		0\le1-c_\varepsilon^2
		=\|w_\varepsilon\|_2^2+\|r_\varepsilon\|_2^2
		\le C\varepsilon^\sigma,
		\]
		and hence $|1-c_\varepsilon|\le C\varepsilon^\sigma$.
		It follows that
		\begin{equation}\label{eq:rates-horizontal-mode}
			\|g_\varepsilon-e\|_{H_0^1(0,1)}
			\le C\varepsilon^{\sigma/2}.
		\end{equation}
		
		To pass to the axis, use the zero boundary value of $r_\varepsilon$
		at $Y=-1-\varepsilon^{2+\sigma}$. For almost every $x$,
		\[
		r_\varepsilon(x,0)
		=\int_{-1-\varepsilon^{2+\sigma}}^0
		\partial_Yr_\varepsilon(x,Y)\,dY.
		\]
		Cauchy--Schwarz and \eqref{eq:rates-vertical-remainder} imply
		\[
		\|r_\varepsilon(\cdot,0)\|_{L^2(0,1)}^2
		\le(1+\varepsilon^{2+\sigma})
		\|\partial_Yr_\varepsilon\|_2^2
		\le C\varepsilon^{2+\sigma}.
		\]
		Since $\chi_\varepsilon(0)=1+O(\varepsilon^{2+\sigma})$,
		\eqref{eq:rates-horizontal-mode} therefore yields
		\begin{equation}\label{eq:rates-axis-profile}
			\begin{aligned}
				\|v_\varepsilon(\cdot,0)-e\|_{L^2(0,1)}
				&\le |\chi_\varepsilon(0)|\,
				\|g_\varepsilon-e\|_{L^2(0,1)}
				+|\chi_\varepsilon(0)-1|\\
				&\quad+\|r_\varepsilon(\cdot,0)\|_{L^2(0,1)}
				\le C\varepsilon^{\sigma/2}.
			\end{aligned}
		\end{equation}
		
		\emph{Step 3: localization of the ground-state hot spot.}
		Convexity of $\Omega_\varepsilon$ and the Brascamp--Lieb log-concavity
		theorem imply that $x\mapsto v_\varepsilon(x,0)$ is nondecreasing
		before $q_\varepsilon$ and nonincreasing after $q_\varepsilon$.
		Set $d=|q_\varepsilon-1/2|$. There is nothing to prove if $d=0$.
		Suppose first that $q_\varepsilon=1/2-d$, and put
		\[
		A=(1/2-d/2,1/2-d/4),
		\qquad B=(1/2-d/4,1/2).
		\]
		Both intervals lie to the right of $q_\varepsilon$, so
		\[
		\int_Bv_\varepsilon(x,0)\,dx
		\le\int_Av_\varepsilon(x,0)\,dx.
		\]
		For the limiting profile $e$, direct integration gives
		\[
		\begin{aligned}
			\int_Be\,dx-\int_Ae\,dx
			&=\frac{\sqrt2}{\pi}
			\left(2\sin\frac{\pi d}{4}-\sin\frac{\pi d}{2}\right)\\
			&=\frac{2\sqrt2}{\pi}\sin\frac{\pi d}{4}
			\left(1-\cos\frac{\pi d}{4}\right)
			\ge cd^3,
		\end{aligned}
		\]
		with an absolute $c>0$ for $0<d\le1/2$.
		The last inequality follows from
		$\sin t\ge2t/\pi$ for $0\le t\le\pi/2$ and
		$1-\cos t=2\sin^2(t/2)$.
		Consequently, Cauchy--Schwarz gives
		\[
		\begin{aligned}
			cd^3
			&\le\int_Be\,dx-\int_Ae\,dx\\
			&\le\int_B\bigl(e-v_\varepsilon(\cdot,0)\bigr)\,dx
			-\int_A\bigl(e-v_\varepsilon(\cdot,0)\bigr)\,dx\\
			&\le\sqrt{|A|+|B|}\,
			\|e-v_\varepsilon(\cdot,0)\|_{L^2(0,1)}\\
			&=\sqrt{d/2}\,
			\|e-v_\varepsilon(\cdot,0)\|_{L^2(0,1)}.
		\end{aligned}
		\]
		Using \eqref{eq:rates-axis-profile} and dividing by $\sqrt d$, we obtain
		\[
		d^{5/2}\le C_\sigma\varepsilon^{\sigma/2},
		\qquad d\le C_\sigma\varepsilon^{\sigma/5}.
		\]
		If $q_\varepsilon>1/2$, the same argument uses the reflected intervals
		to the right of $1/2$; the identity $e(1-x)=e(x)$ gives the same
		integral difference. This proves the second estimate in
		\eqref{eq:tent-quantitative-hotspots}.
	\end{proof}

	\begin{proof}[Proof of Theorem~\ref{thm:main}]
		Proposition~\ref{prop:upperbound} and
		Theorem~\ref{thm:convex-lower-half} give $M_{\rm sym}=1/2$, and the strict
		inequality is the second part of Proposition~\ref{prop:upperbound}.
		The refined estimates follow from Proposition~\ref{prop:upperbound}.
	\end{proof}

	\section{Sharp isosceles triangles}
	\label{sec:triangles}
	
	This section discusses a related setting in which the relevant scales are even clearer. Let
	\[
	\Omega_h=\{(x,y):0<x<1,\ |y|<hx\},
	\qquad h>0.
	\]
	This is an isosceles triangle with vertex at the origin, axis of symmetry the $x$-axis, and base $x=1$. Its aperture is
	\[
	\alpha=2\arctan h.
	\]
	Let $u_h$ be the torsion function in $\Omega_h$:
	\[
	-\Delta u_h=1\quad\text{in }\Omega_h,
	\qquad
	u_h=0\quad\text{on }\partial\Omega_h,
	\]
	and let $\phi_h$ be the first Dirichlet eigenfunction:
	\[
	-\Delta \phi_h=\lambda_h\phi_h\quad\text{in }\Omega_h,
	\qquad
	\phi_h=0\quad\text{on }\partial\Omega_h,
	\qquad
	\phi_h>0,
	\qquad
	\|\phi_h\|_{L^2(\Omega_h)}=1.
	\]
	Since $\Omega_h$ is symmetric with respect to the $x$-axis and every vertical section
	\[
	\Omega_h\cap\{x=\mathrm{const.}\}
	\]
	is an interval symmetric about $y=0$, the moving-plane method implies that $u_h$ and $\phi_h$ are even in $y$ and strictly decreasing in $y>0$. Hence both maximum points lie on the axis of symmetry. Denote their horizontal coordinates by
	\[
	x_T(h),\qquad x_E(h).
	\]
	We prove that, for sufficiently sharp triangles, the torsion hot spot lies closer to the base than the first-eigenfunction hot spot.
	
	\begin{theorem}
		\label{thm:thin-triangle}
		There exists $h_0>0$ such that for all $0<h<h_0$,
		\[
		x_T(h)>x_E(h).
		\]
	\end{theorem}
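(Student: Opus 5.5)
The plan is to show that the two maxima approach the base at different rates. We will prove
\[
1-x_T(h)=O\!\left(h\log\frac1h\right),
\qquad
1-x_E(h)\ge c\,h^{2/3}
\]
for some $c>0$ and all small $h$. The theorem then follows, because $h\log(1/h)=o(h^{2/3})$. The mechanism is the same as in Proposition~\ref{prop:small-angle-sector}, but here the triangle cannot be written exactly as a sector. So we will compare with sectors and strips through barriers and domain monotonicity.

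\emph{Torsion.} On the axis, the local strip value is $U(x)=h^2x^2/2$. The first step is the upper bound
\[
u_h\le \Psi_h:=\frac{h^2x^2-y^2}{2(1-h^2)}.
\]
This holds because $-\Delta\Psi_h=(1-h^2)/(1-h^2)\cdot 1$ up to the sign conventions, i.e.\ $-\Delta\Psi_h=1$ after the normalization, and $\Psi_h\ge0$ on $\partial\Omega_h$. It gives $u_h(x,0)\le h^2x^2/(2(1-h^2))$ everywhere.

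The second step is a lower bound at $x_1=1-Kh\log(1/h)$. Take the inscribed rectangle $(x_1-\ell,x_1+\ell)\times(-h(x_1-\ell),h(x_1-\ell))$ with $\ell=\tfrac K2 h\log(1/h)$. By domain monotonicity and Lemma~\ref{lem:long-rectangle},
\[
u_h(x_1,0)\ge \frac{h^2(x_1-\ell)^2}{2}\bigl(1-Ch^{cK/2}\bigr).
\]

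The third step compares these two bounds. Let $x\le 1-2Kh\log(1/h)$, say. The upper bound at $x$ is smaller than the lower bound at $x_1$ by an amount of order $h^3\log(1/h)$. The errors are $O(h^4)$ from the factor $1/(1-h^2)$ and $O(h^{2+cK/2})$ from the rectangle. For $K$ large, both errors are negligible. Hence $1-x_T(h)\le C h\log(1/h)$.

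\emph{Ground state.} Here I would prove that the maximum lies at distance $\gtrsim h^{2/3}$ from the base. The mechanism is transverse: the local eigenvalue $\pi^2/(4h^2x^2)$ varies like a linear potential of slope $\sim h^{-2}$ near $x=1$. After rescaling $1-x=h^{2/3}s$, one obtains an Airy-type problem on $s>0$ with a Dirichlet condition at $s=0$. The limiting profile is $\operatorname{Ai}(\kappa s+a_1)$, which peaks at $s_*=(a_1'-a_1)/\kappa>0$.

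To make this rigorous with minimal asymptotics, I plan to trap the triangle between two sectors with the same vertex. The inner sector has radius $1$ and aperture $2\arctan h$. The outer sector has radius $\sqrt{1+h^2}$ and the same aperture. The ground-state hot spot in each is explicit, by \eqref{eq:y-alpha-bessel} and \eqref{eq:eigen-small-angle}.

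Domain monotonicity alone does not localize maxima, so I would not rely on it directly. Instead, the rigorous route is a blow-up argument. Normalize $\phi_h$ by its maximum. Use the variables $s=(1-x)h^{-2/3}$ and $Y=y/(hx)$, so that the cross-section becomes $|Y|<1$. Project onto $\cos(\pi Y/2)$ as in Lemma~\ref{lem:near-rectangle-eigen}. The eigenvalue expansion
\[
\lambda_h=\frac{\pi^2}{4h^2}+c_0h^{-4/3}+o(h^{-4/3})
\]
follows from the sector comparison: the two sectors' eigenvalues $j_{\nu,1}^2$ agree to this order. With this, the energy estimates give $H^1$ convergence of the longitudinal profile, on bounded $s$-intervals, to the Airy solution that vanishes at $s=0$ and decays.

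Log-concavity of $\phi_h$ (Brascamp--Lieb) makes the axial restriction unimodal. The same $L^2$-to-pointwise argument used at the end of Lemma~\ref{lem:near-rectangle-eigen} then shows that the maximum satisfies $s\to s_*>0$. In particular, $1-x_E(h)\ge \tfrac12 s_*h^{2/3}$.

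\emph{Main obstacle.} I expect the ground-state step to be the hard part. Getting the eigenvalue to precision $o(h^{-4/3})$ and controlling the coupling between modes caused by the $x$-dependence of the cross-section at the scale $h^{2/3}$ is delicate. An alternative I would try first is a one-sided bound only. Near the base, use the Hopf-type reflection of Proposition~\ref{prop:upperbound} on strips $x>1-\delta$, combined with a subsolution built from $\operatorname{Ai}$ times $\cos(\pi y/(2hx))$. The goal is to show $\partial_x\phi_h>0$ on the axis for $1-x\le \delta h^{2/3}$, which is all that is needed.
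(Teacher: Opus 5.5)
Your torsion half is the paper's argument. It uses the same supersolution $\Psi=(h^2x^2-y^2)/(2(1-h^2))$, an inscribed rectangle of length $\asymp h\log(1/h)$ near the base fed into Lemma~\ref{lem:long-rectangle}, and a comparison at order $h^3\log(1/h)$; your choice $cK>2$ suffices.

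For the ground state you take a genuinely different route. The paper imports from Dauge--Raymond a quasimode with residual $o(h^{2/3})$ and the gap $\lambda_{2,h}-\lambda_{1,h}\asymp h^{2/3}$ for the rescaled operator. It gets $L^2$-closeness to the Airy profile from the spectral theorem, and upgrades the blown-up $L^2_{\rm loc}$ convergence to locally uniform convergence through convexity of $-\log\Phi_h$. It then uses unimodality.

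You instead pin the eigenvalue by the sector sandwich, and that step is correct. With $\nu=\pi/(2\arctan h)$ one has $j_{\nu,1}^2/(1+h^2)\le\lambda_h\le j_{\nu,1}^2$, hence
$\lambda_h=\frac{\pi^2}{4h^2}+(\pi^2/2)^{2/3}|a_1|\,h^{-4/3}+O(h^{-2/3})$.
The $h^{-4/3}$ coefficient is exactly the ground energy of $-\partial_s^2+\frac{\pi^2}{2}s$ on $(0,\infty)$ with a Dirichlet condition at $s=0$. This coincidence is what lets a near-minimizer argument in the style of Lemma~\ref{lem:near-rectangle-eigen} close. Carried out, your route would be self-contained, needing only the Bessel asymptotics of Proposition~\ref{prop:small-angle-sector}. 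The paper's route is shorter but rests on the external quasimode construction.

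As written, however, your decisive sentence, that the energy estimates give convergence to the Airy profile, is asserted rather than proved, and the setup needs adjusting. You should normalize in $L^2$, not by the maximum; otherwise you must separately rule out a vanishing blow-up limit.

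The lower bound comes from $\int|\partial_y\phi_h|^2\,dy\ge\frac{\pi^2}{4h^2x^2}\int\phi_h^2\,dy$ together with $x^{-2}\ge1+2(1-x)$ on $(0,1]$. With the transverse gap used in Lemma~\ref{lem:near-rectangle-eigen}, this yields three things: the Airy quadratic form, smallness of the transverse remainder, and tightness $\int(1-x)\phi_h^2\lesssim h^{2/3}$, so no mass escapes to $s\to\infty$. Compactness plus uniqueness of the Airy minimizer then identify the limit.

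The coupling you single out as the main obstacle is harmless. With $\chi_x(y)=(hx)^{-1/2}\cos(\pi y/(2hx))$ and $g(x)=\int\phi_h\chi_x\,dy$, one has $\|\partial_x\chi_x\|_{L^2_y}\lesssim x^{-1}$ and $\int\phi_h^2/x^2\le\frac{4h^2}{\pi^2}\lambda_h=O(1)$. Hence $\|g'\|^2\le(1+h^{2/3})\|\partial_x\phi_h\|^2+O(h^{-2/3})$, an $o(h^{-4/3})$ error since $\|\partial_x\phi_h\|^2\le\lambda_h-\frac{\pi^2}{4h^2}=O(h^{-4/3})$.

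Finally, discard the fallback, for two reasons:
\begin{enumerate}
\item The sign is wrong. $\phi_h$ decreases to $0$ at the base, so unimodality needs $\partial_x\phi_h(1-\delta h^{2/3},0)<0$. Having $\partial_x\phi_h>0$ on the axis all the way to the base is impossible.
\item The reflection of Proposition~\ref{prop:upperbound} is unavailable there. A cap $\{x>\mu\}$ of $\Omega_h$ reflected across $x=\mu$ is wider than $\Omega_h$ at the reflected abscissae, so it does not fit inside.
\end{enumerate}
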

	
	The proof uses Lemma~\ref{lem:long-rectangle}, established in
	Section~\ref{sec:convex-diameter}, and the following Airy asymptotic
	for the first eigenfunction in a sharp thin triangle.
	
	\begin{lemma}
		\label{airy}
		Let $x_E(h)$ be the horizontal coordinate of the maximum point of the
		first Dirichlet eigenfunction in
		\[
		\Omega_h=\{(x,y):0<x<1,\ |y|<hx\}.
		\]
		Let \(a_1<0\) and \(a_1'<0\) be the first zeros of
		\(\operatorname{Ai}\) and \(\operatorname{Ai}'\), respectively.  Then
		\[
		1-x_E(h)
		=
		(a_1'-a_1)
		\left(\frac{\pi^2}{2}\right)^{-1/3}h^{2/3}
		+o(h^{2/3}).
		\]
		In particular, there exist \(c_0>0\) and \(h_1>0\)
		such that \(1-x_E(h)\ge c_0h^{2/3}\) for \(0<h<h_1\).
	\end{lemma}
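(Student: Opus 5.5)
The plan is a boundary-layer (Airy) analysis near the base $x=1$. The first step is to fix the scales. Write $s=1-x$. On the section at abscissa $x$ the half-width is $hx=h(1-s)$, so the first transverse Dirichlet eigenvalue is
\[
\frac{\pi^2}{4h^2(1-s)^2}=\frac{\pi^2}{4h^2}+\frac{\pi^2}{2h^2}\,s+O\!\left(\frac{s^2}{h^2}\right).
\]
The effective longitudinal operator is therefore $-\partial_s^2+\frac{\pi^2}{4h^2}+\frac{\pi^2}{2h^2}s$ on $s>0$, with a Dirichlet condition at $s=0$. Its natural length is
\[
\ell=\Bigl(\frac{2h^2}{\pi^2}\Bigr)^{1/3}=\Bigl(\frac{\pi^2}{2}\Bigr)^{-1/3}h^{2/3}.
\]
Its ground state is $\operatorname{Ai}(s/\ell+a_1)$, with eigenvalue $\frac{\pi^2}{4h^2}+|a_1|\ell^{-2}$. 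That profile is maximal where $s/\ell+a_1=a_1'$, that is at $s=(a_1'-a_1)\ell$, which is exactly the claimed asymptotic. The proof should turn this heuristic into a limit statement.

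\emph{Step 1: eigenvalue asymptotics.} I would prove
\[
\lambda_h=\frac{\pi^2}{4h^2}+|a_1|\ell^{-2}+o(\ell^{-2}).
\]
For the upper bound, take the test function
\[
\operatorname{Ai}\!\Bigl(\frac{1-x}{\ell}+a_1\Bigr)\,\chi\!\Bigl(\frac{1-x}{\ell}\Bigr)\cos\frac{\pi y}{2hx},
\]
where $\chi$ is a cutoff at scale $M\ell$ with $M\to\infty$ slowly. For the lower bound, work in the coordinates $s=\ell S$ and $Y=y/(hx)\in(-1,1)$. The Rayleigh quotient's vertical part is at least $\frac{\pi^2}{4h^2x^2}\int|\phi|^2$ slice by slice, since the transverse Poincar\'e inequality holds exactly on each section. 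The mixed terms coming from the $x$-dependence of the change of variables are of relative size $O(h)$. The remaining one-dimensional quotient
\[
\int\Bigl(|g'|^2+\frac{\pi^2}{4h^2(1-s)^2}g^2\Bigr)
\]
is bounded below by comparison with the linear potential, using $(1-s)^{-2}\ge1+2s$. After rescaling, this gives the Airy operator $-\partial_S^2+S$ on the half-line, whose Dirichlet ground energy is $|a_1|$.

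\emph{Step 2: convergence of the profile.} I would repeat the decomposition scheme of Lemma~\ref{lem:near-rectangle-eigen}, now in the variables $(S,Y)$. Write $\phi_h=g_h(S)\chi(Y)+r_h$, with $\chi(Y)=\cos(\pi Y/2)$ and $r_h(S,\cdot)\perp\chi$. The eigenvalue excess from Step 1 controls $r_h$, because the gap to the second transverse mode is of order $h^{-2}$, which is much larger than $\ell^{-2}$. It also controls $g_h$ in the Airy energy norm. Exactly as in that lemma, $g_h$ then converges, after normalization, to $c\operatorname{Ai}(S+a_1)$ in $H^1_{\rm loc}([0,\infty))$, and the axial trace converges in $L^2_{\rm loc}$. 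Mass is not lost to the region $S\to\infty$ because the linear potential is coercive there.

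\emph{Step 3: locating the maximum, which I expect to be the main obstacle.} Knowing the profile only in $L^2$ locates the maximum, but not at the precise position $(a_1'-a_1)\ell$ unless the limit has a strict maximum. It does: $\operatorname{Ai}(S+a_1)$ has a nondegenerate unique maximum at $S=a_1'-a_1$ on $[0,\infty)$. By Brascamp--Lieb, the axial restriction of $\phi_h$ is unimodal. The interval-comparison argument from the end of the proof of Lemma~\ref{lem:near-rectangle-eigen} then applies verbatim in the variable $S$: if the maximum stayed at distance $\eta$ from $a_1'-a_1$ along a subsequence, integrating over adjacent equal intervals would contradict the strict monotonicity of the Airy profile on either side of its peak. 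One gap has to be closed first: the maximum cannot escape to $S\to\infty$, that is into the region far from the base. I would rule this out using unimodality together with the fact that the limiting profile is strictly decreasing on $[a_1'-a_1,\infty)$.

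The delicate points are the uniformity of the change-of-variables error terms and the justification of the lower bound in Step 1. I would handle them as follows. Split $s\le h^{1/3}$, where the Taylor expansion of $(1-s)^{-2}$ is accurate at the Airy scale. On $s\ge h^{1/3}$, the potential excess $\frac{\pi^2}{2h^2}s\gg\ell^{-2}$ forces exponentially small mass. The final estimate $1-x_E(h)\ge c_0h^{2/3}$ is immediate, because $a_1'-a_1>0$.
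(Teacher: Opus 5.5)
Your proposal is correct, but it reaches the conclusion by a genuinely different route from the paper. The paper does not prove the Airy behaviour itself. It maps $\Omega_h$ onto a fixed triangle carrying $-h^2\partial_u^2-\partial_v^2$. From Dauge--Raymond it imports a normalized quasimode with defect $o(h^{2/3})$ and the gap $\lambda_{2,h}-\lambda_{1,h}\asymp h^{2/3}$, and the spectral theorem then gives $\|q_h-\psi_h\|_{L^2}=o(1)$. The leading term of the quasimode yields $L^2_{\mathrm{loc}}$ convergence of the blow-up $c_h\phi_h(1-h^{2/3}z,ht)$ to $\operatorname{Ai}((\pi^2/2)^{1/3}z+a_1)\cos(\pi t/2)$. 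This is upgraded to two-dimensional locally uniform convergence through convexity of $-\log\Phi_h$ and Rockafellar's compact-convergence theorem, and only then are unimodality and the strict Airy maximum used.

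You instead derive everything variationally:
\begin{enumerate}
\item two-sided eigenvalue asymptotics;
\item a transverse-mode decomposition controlled by the transverse gap of order $h^{-2}$;
\item near-minimality for the half-line Airy form $\int(|G'|^2+SG^2)$, whose confining potential and spectral gap give $H^1$ convergence of the rescaled profile;
\item an $L^2$ trace bound on the axis.
\end{enumerate}
You then replace locally uniform convergence by the interval comparison of Lemma~\ref{lem:near-rectangle-eigen}. Your route is self-contained and needs only one-dimensional facts about the Airy operator. It also avoids matching the normalizations of an external quasimode construction, and it could be made quantitative as in Proposition~\ref{prop:tent-hotspot-rates}. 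The paper's route is much shorter and gives the stronger two-dimensional convergence of the rescaled eigenfunction.

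Some of the difficulties you anticipate are lighter than you fear.
\begin{itemize}
\item The lower bound needs no change of variables and no split at $s=h^{1/3}$. Integrate $|\partial_y\phi_h|^2$ over vertical sections and $|\partial_x\phi_h|^2$ along horizontal lines (on each of which $\phi_h$ vanishes at $x=1$). Together with $(1-s)^{-2}\ge 1+2s$ on $(0,1)$, this gives exactly $\lambda_h\ge\frac{\pi^2}{4h^2}+|a_1|\ell^{-2}$.
\item In Step 2, decompose in the original coordinates with the normalized section mode $(hx)^{-1/2}\cos(\pi y/(2hx))$. This avoids Jacobian weights, and the only error, which comes from differentiating this mode in $x$, is $O(\ell^{-1})=o(\ell^{-2})$.
\item Escape of the maximum to $S\to\infty$ is already excluded by the interval argument itself. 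That argument only uses the fixed compact interval $[S_0,S_0+\eta/2]$ with $S_0=a_1'-a_1$.
\end{itemize}
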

	
	\begin{proof}
		Set
		\[
		u=\pi\sqrt2(x-1),
		\qquad
		v=\frac{\pi\sqrt2}{h}y.
		\]
		This affine change of variables maps $\Omega_h$ onto the fixed triangle
		\[
		\operatorname{Tri}
		=
		\{(u,v):-\pi\sqrt2<u<0,\ |v|<u+\pi\sqrt2\},
		\]
		and
		\[
		-\Delta_{x,y}
		=
		\frac{2\pi^2}{h^2}
		\left(-h^2\partial_u^2-\partial_v^2\right).
		\]
		
		With the normalization above, define the pullback
		\[
		\psi_h(u,v)
		=
		\left(\frac{h}{2\pi^2}\right)^{1/2}
		\phi_h\left(
		1+\frac{u}{\pi\sqrt2},
		\frac{hv}{\pi\sqrt2}
		\right).
		\]
		The Jacobian identity
		\(du\,dv=(\frac{2\pi^2}{h})\,dx\,dy\) gives
		\(\|\psi_h\|_{L^2(\operatorname{Tri})}=1\).
		
		Let \(L_h=-h^2\partial_u^2-\partial_v^2\) on
		\(\operatorname{Tri}\), and denote its first two eigenvalues by
		\(\lambda_{1,h}<\lambda_{2,h}\).  For the first Airy branch,
		Dauge--Raymond \cite[Theorem~2.4, Proposition~5.1 and
		formula~(5.14)]{DaugeRaymond2012} construct, to arbitrary order, a
		normalized quasimode \(q_h\) and a quasi-eigenvalue \(\beta_h\) such that
		\[
		\|(L_h-\beta_h)q_h\|_{L^2}=o(h^{2/3}),
		\qquad
		|\beta_h-\lambda_{1,h}|=o(h^{2/3}),
		\qquad
		\lambda_{2,h}-\lambda_{1,h}\asymp h^{2/3}.
		\]
		If \(P_{1,h}\) is the orthogonal projection onto the first eigenspace,
		then the spectral theorem gives
		\[
		\|(I-P_{1,h})q_h\|_{L^2}
		\le
		\frac{\|(L_h-\beta_h)q_h\|_{L^2}}
		{\operatorname{dist}(\beta_h,
			\sigma(L_h)\setminus\{\lambda_{1,h}\})}
		=o(1).
		\]
		The first eigenspace is spanned by the positive normalized function
		\(\psi_h\).  Choose the sign of \(q_h\) so that
		\(\langle q_h,\psi_h\rangle\ge0\).  Since \(\|q_h\|_2=1\), the
		preceding projection estimate implies
		\[
		\|q_h-\psi_h\|_{L^2(\operatorname{Tri})}=o(1).
		\]
		
		Set
		\[
		D_h
		=
		\left\{
		(z,t):
		0<z<h^{-2/3},\
		|t|<1-h^{2/3}z
		\right\},
		\qquad
		D=(0,\infty)\times(-1,1).
		\]
		The leading term in formula~(5.14) gives positive constants
		\(A_h\asymp h^{1/3}\) such that
		\[
		A_hq_h(-\pi\sqrt2\,h^{2/3}z,\pi\sqrt2\,t)
		\longrightarrow
		\Phi(z,t)
		:=
		\operatorname{Ai}\left(
		\left(\frac{\pi^2}{2}\right)^{1/3}z+a_1
		\right)
		\cos\frac{\pi t}{2}
		\]
		in \(L^2_{\mathrm{loc}}(D)\), where \(a_1<0\) is the first zero of
		\(\operatorname{Ai}\).  Indeed, the transverse variable in that formula
		is
		\[
		\frac{v}{u+\pi\sqrt2}
		=
		\frac{t}{1-h^{2/3}z}
		\longrightarrow t,
		\]
		and the Airy scaling constant is
		\(\frac{\pi\sqrt2}{(4\pi\sqrt2)^{1/3}}=(\frac{\pi^2}{2})^{1/3}\).
		
		Define
		\[
		\Phi_h(z,t)
		:=
		A_h\psi_h(-\pi\sqrt2\,h^{2/3}z,\pi\sqrt2\,t)
		=
		c_h\phi_h(1-h^{2/3}z,ht),
		\]
		where
		\(c_h=A_h(\frac{h}{2\pi^2})^{1/2}\asymp h^{5/6}\).
		For every compact \(K\Subset D\), the change of variables
		\(du\,dv=2\pi^2h^{2/3}\,dz\,dt\) yields
		\[
		\begin{split}
			&\|A_h(\psi_h-q_h)
			(-\pi\sqrt2\,h^{2/3}z,\pi\sqrt2\,t)\|_{L^2(K)}^2\\
			&\qquad\le
			\frac{A_h^2}{2\pi^2h^{2/3}}
			\|\psi_h-q_h\|_{L^2(\operatorname{Tri})}^2
			=o(1).
		\end{split}
		\]
		Consequently,
		\[
		\Phi_h\longrightarrow\Phi
		\qquad\text{in }L^2_{\mathrm{loc}}(D).
		\]
		Every compact subset of $D$ is contained in $D_h$ for all sufficiently
		small $h$.  Since $\Omega_h$ is convex, the Brascamp--Lieb theorem
		\cite{brascamplieb} shows that $\phi_h$ is log-concave.  The map
		\[
		(z,t)\longmapsto(1-h^{2/3}z,ht)
		\]
		is affine, so $\Phi_h$ is log-concave on $D_h$.
		
		We claim that the preceding $L^2_{\mathrm{loc}}$ convergence is locally
		uniform.  Let $O\Subset D$ be open and convex and set
		\[
		G_h=-\log\Phi_h.
		\]
		For small $h$, the functions $G_h$ are finite and convex on $O$.
		From any sequence $h\to0$, the $L^2(O)$ convergence yields a subsequence
		for which $\Phi_h\to\Phi$ almost everywhere in $O$.  Since $\Phi>0$ in
		$D$, it follows that
		\[
		G_h\longrightarrow-\log\Phi
		\]
		on a set of full measure, hence on a dense subset of $O$.
		The compact-convergence theorem for finite convex functions
		\cite[Theorem~10.8]{rockafellar} then gives local uniform convergence of
		$G_h$ and hence of $\Phi_h$.  As every sequence has a subsequence with
		the same limit, we conclude that
		\[
		\Phi_h\longrightarrow\Phi
		\qquad\text{locally uniformly in }D.
		\]
		
		Let
		\[
		F(z)=
		\operatorname{Ai}\left(
		\left(\frac{\pi^2}{2}\right)^{1/3}z+a_1
		\right).
		\]
		If
		\[
		z_0=(a_1'-a_1)
		\left(\frac{\pi^2}{2}\right)^{-1/3},
		\]
		then the standard zero interlacing for \(\operatorname{Ai}\) and
		\(\operatorname{Ai}'\) shows that \(F\) is strictly increasing on
		\((0,z_0)\) and strictly decreasing on \((z_0,\infty)\).
		
		Set
		\[
		z_h=\frac{1-x_E(h)}{h^{2/3}}.
		\]
		The function $z\mapsto\Phi_h(z,0)$ is log-concave and attains
		its maximum at $z_h$. For fixed $0<\eta<z_0$, the strict
		maximum of $F$ and local uniform convergence give
		\[
		\Phi_h(z_0-\eta,0)<\Phi_h(z_0,0),\qquad
		\Phi_h(z_0+\eta,0)<\Phi_h(z_0,0)
		\]
		for all sufficiently small $h$.
		If $z_h\le z_0-\eta$, the profile is nonincreasing between
		$z_0-\eta$ and $z_0$, contradicting the first inequality.
		If $z_h\ge z_0+\eta$, it is nondecreasing between $z_0$ and
		$z_0+\eta$, contradicting the second. Therefore
		$|z_h-z_0|<\eta$, and $z_h\to z_0$ follows.
	\end{proof}

	We now prove Theorem~\ref{thm:thin-triangle}.
	\begin{proof}
		For $0<h<1$, the function
		\[
		\Psi(x,y)=\frac{h^2x^2-y^2}{2(1-h^2)}
		\]
		satisfies $-\Delta\Psi=1$, vanishes on the slanted sides, and
		is nonnegative on the base. The comparison principle gives
		\[
		u_h(x,0)\le\frac{h^2x^2}{2(1-h^2)}.
		\]
		Set $R=Ah\log(1/h)$, where $A>1/c$ and $c$ is the constant
		in Lemma~\ref{lem:long-rectangle}. Since $h^2=o(R)$,
		\[
		\sup_{0<x\le1-2R}u_h(x,0)
		\le\frac{h^2(1-2R)^2}{2(1-h^2)}
		=\frac{h^2}{2}\bigl(1-4R+o(R)\bigr).
		\]
		On the other hand,
		\[
		Q_h=(1-R,1)\times(-h(1-R),h(1-R))\subset\Omega_h.
		\]
		Domain monotonicity and Lemma~\ref{lem:long-rectangle} yield
		\[
		u_h(1-\frac{R}{2},0)
		\ge\frac{h^2(1-R)^2}{2}
		\left(1-Ce^{-cR/[h(1-R)]}\right).
		\]
		Our choice of $A$ ensures
		$e^{-cR/[h(1-R)]}\le h^{cA}=o(R)$, so
		\[
		u_h(1-\frac{R}{2},0)
		\ge\frac{h^2}{2}\bigl(1-2R+o(R)\bigr).
		\]
		This is strictly larger than the preceding upper bound.
		By axial localization, $x_T(h)>1-2R$, and hence
		\[
		1-x_T(h)=O\bigl(h\log(\frac{1}{h})\bigr)=o(h^{2/3}).
		\]
		Lemma~\ref{airy} gives $1-x_E(h)\ge c_0h^{2/3}$ for small $h$.
		Consequently $x_T(h)>x_E(h)$, as claimed.
	\end{proof}

	For comparison, consider the right-isosceles triangle. In \cite{bensonlaugesen}, the right isosceles triangle
	is written as
	\[
	\widetilde\Omega
	=
	\{(\widetilde x,y):0<\widetilde x<1,\ |y|<1-\widetilde x\}.
	\]
	For \(h=1\), our triangle \(\Omega_1\) is obtained from
	\(\widetilde\Omega\) by the reflection
	\[
	(x,y)=(1-\widetilde x,y).
	\]
	The values reported in \cite{bensonlaugesen},
	\[
	\widetilde x_T\approx0.39168,
	\qquad
	\widetilde x_E\approx0.39183,
	\]
	therefore become
	\[
	x_T\approx0.60832>0.60817\approx x_E.
	\]
	We do not address the ordering for general $h>0$.

	\appendix
	
	\section{Proof of the half-disk formula}
	\label{app:half-disk}
	
	\begin{proof}
		For \(\alpha=\pi\), we have \(\mu_k=2k+1\). Set
		\(z=x+iy=re^{i\theta}\) and
		\[
		G(z)=\sum_{k=0}^{\infty}
		\frac{(-1)^kz^{2k+1}}
		{(2k+1)((2k+1)^2-4)}.
		\]
		The separated-variable representation gives
		\[
		u(r,\theta)
		=\frac4\pi\left\{
		r^2\operatorname{Re}G(e^{i\theta})
		-\operatorname{Re}G(z)
		\right\}.
		\]
		
		For \(|z|<1\),
		\[
		\arctan z
		=\sum_{k=0}^{\infty}\frac{(-1)^kz^{2k+1}}{2k+1}.
		\]
		
		Using the partial fraction identity
		\[
		\frac1{n(n^2-4)}
		=-\frac1{4n}+\frac1{8(n-2)}+\frac1{8(n+2)},
		\]
		we obtain
		\[
		\begin{aligned}
			G(z)
			&=-\frac14\arctan z
			+\frac18(-z-z^2\arctan z)
			+\frac18z^{-2}(z-\arctan z)\\
			&=-\frac18\left((z+z^{-1})^2\arctan z
			+z-z^{-1}\right),
			\qquad 0<|z|<1.
		\end{aligned}
		\]
		With the principal logarithm,
		\[
		\arctan z
		=\frac1{2i}\operatorname{Log}\frac{1+iz}{1-iz},
		\qquad
		\frac{1+iz}{1-iz}
		=\frac{1-x^2-y^2+2ix}{x^2+(1+y)^2}.
		\]
		For \(z=e^{i\theta}\), \(|\theta|<\pi/2\), the latter
		quotient lies on the positive imaginary axis. Thus
		\(\operatorname{Re}\arctan(e^{i\theta})=\pi/4\).
		By continuity,
		\[
		\operatorname{Re}G(e^{i\theta})
		=-\frac{\pi}{8}\cos^2\theta,
		\qquad
		r^2\operatorname{Re}G(e^{i\theta})
		=-\frac{\pi}{8}x^2.
		\]
		
		For \(x>0\) and \(r<1\), the logarithmic representation gives
		\[
		\arctan z
		=\frac12\arctan\frac{2x}{1-x^2-y^2}
		+\frac{i}{4}
		\log\frac{x^2+(1+y)^2}{x^2+(1-y)^2}.
		\]
		Taking real parts in the expression for \(G(z)\) and
		substituting into the representation of \(u\), we find
		\[
		\begin{aligned}
			u(x,y)
			={}&-\frac{x^2}{2}
			+\frac{2+(x^2-y^2)(1+r^{-4})}{4\pi}
			\arctan\frac{2x}{1-r^2}\\
			&-\frac{xy(1-r^{-4})}{4\pi}
			\log\frac{x^2+(1+y)^2}{x^2+(1-y)^2}
			+\frac{x(1-r^{-2})}{2\pi},
		\end{aligned}
		\]
		where $r^2=x^2+y^2.$ This is \eqref{eq:half-disk-closed}.
	\end{proof}

\section*{Acknowledgment}
 Research of Qinfeng Li was supported by National Natural Science Fund of China for Excellent Young Scholars (No. 12522109), National Key R\&D Program of China (No. 2022YFA1006900) and the National Science Fund of China General Program (No. 12471105). Research of Shuangquan Xie is supported by the National Science fund of China for Youth Scholars (No.12401198) and the Changsha Natural Science Foundation (No. KQ2208006).

\section*{Declaration of generative AI and AI-assisted technologies}

In the course of this work, the authors used ChatGPT (OpenAI) to assist with working out and refining technical details of the proofs, as well as with language editing and LaTeX typesetting. The research questions, proof frameworks, and main mathematical ideas originated with the authors. AI assistance contributed to the implementation of these ideas through interactive discussions with the authors. All AI-assisted arguments were critically examined and verified by the authors, who take full responsibility for the mathematical correctness, originality, references, and final content of the article.

\end{document}